\newtheorem{thm}{Theorem} 
\newtheorem{theorem}{Theorem}
\newtheorem{Question}[thm]{Question}
\newtheorem*{theorem*}{Theorem}
\newtheorem{lemma}[thm]{Lemma}
\newtheorem{proposition}[thm]{Proposition}
\newtheorem{corollary}[thm]{Corollary}
\theoremstyle{definition}
\theoremstyle{definition}
\newtheorem{example}[thm]{Example}
\theoremstyle{definition}
\newtheorem{definition}[thm]{Definition}
\newcommand{\SRA}{\operatorname{SRA}}
\newcommand{\id}{\operatorname{id}}
\newcommand{\LRB}{\operatorname{LRB}}
\newcommand{\CAT}{\operatorname{CAT}}
\newcommand{\RCD}{\operatorname{RCD}} 
\newcommand{\ATB}{\operatorname{ATB}}
\newcommand{\EVI}{\operatorname{EVI}}
\newcommand{\CD}{\operatorname{CD}}
\numberwithin{equation}{section}
\def\R{{\mathbb R}}
\def\N{{\mathbb N}}
\def\ZZ{{\mathbb Z}}
\def\RR{{\mathbb R}}
\def\wt{\widetilde}
\def\wb{\overline}
\def\BB{{\mathcal B}}
\def\GG{{\Gamma}}
\def\RRR{{\mathscr{R}}}
\def\NNN{{\mathscr{N}}}
\def\M{{\mathcal M}}
\def\<{\langle}
\def\>{\rangle}
\def \ee{{\epsilon}}
\def \dd{{\delta}}
\def \ww{{\omega}}
\def \gg {{\gamma}}
\def \JJ {{\Pi}}
\def \aa {{\alpha}}
\def \bb {{\beta}}
\def \ll {{\lambda}}
\def \TTT {{\Theta}}
\begin{document}

\title{Self-contracted curves in spaces with weak lower curvature bound}

\begin{abstract}
We show that bounded self-contracted curves are rectifiable
in metric spaces with weak lower curvature bound in a sense we introduce in this article.
This class of spaces is wide and includes, for example,
finite-dimensional Alexandrov spaces of curvature bounded below
and Berwald spaces of nonnegative flag curvature.
(To be more precise, our condition is regarded as a strengthened doubling condition
and holds also for a certain class of metric spaces with upper curvature bound.)
We also provide the non-embeddability of large snowflakes into
(balls in) metric spaces in the same class.
We follow the strategy of the last author's previous paper
based on the small rough angle condition,
where spaces with upper curvature bound are considered.
The results in this article show that such a strategy applies
to spaces with lower curvature bound as well.




\end{abstract}

\keywords{isometric embedding, self-contracted curve, snowflake}
\subjclass[2010]{51F99}
 
\author{Nina Lebedeva}
\address[Nina Lebedeva]{Steklov Institute of Mathematics, Russian Academy of Sciences, 27 Fontanka, 191023 St.Petersburg, Russia}
\email[Nina Lebedeva]{lebed@pdmi.ras.ru}

\author{Shin-ichi Ohta}
\address[Shin-ichi Ohta]{Department of Mathematics, Osaka University, Osaka 560-0043, Japan
\& RIKEN Center for Advanced Intelligence Project (AIP),
1-4-1 Nihonbashi, Tokyo 103-0027, Japan}
\email[Shin-ichi Ohta]{s.ohta@math.sci.osaka-u.ac.jp}

\author{Vladimir Zolotov}
\address[Vladimir Zolotov]{Steklov Institute of Mathematics, Russian Academy of Sciences, 27 Fontanka, 191023 St.Petersburg, Russia 
and Chebyshev Laboratory, St. Petersburg State University, 14th Line V.O., 29B, Saint Petersburg 199178 Russia}
\email[Vladimir Zolotov]{paranuel@mail.ru}

\maketitle

\section{Introduction}

\subsection{Self-contracted curves}

Let $(X,d)$ be a metric space and $I \subset \R$ an interval.
A curve $\gg:I \longrightarrow X$ is said to be \textit{self-contracted} if
\[ d\big( \gg(t_2),\gg(t_3) \big) \le d\big( \gg(t_1),\gg(t_3) \big) \]
holds for every $t_1 \le t_2 \le t_3$
(we remark that $\gg$ is not necessarily continuous).
This notion was introduced in \cite{DLS} and turned out quite useful
for studying the \emph{rectifiability} (finiteness of the length) of $\gg$.
Self-contracted curves arise as gradient curves of quasi-convex functions
(see Definition \ref{quasi-convex}) in ``Riemannian-like'' metric spaces
(see Section~\ref{QCtoSC}),
and the rectifiability of gradient curves is an intriguing problem in optimization theory
and dynamical systems.
We refer to \cite{DLS,DDDL} for more about
the background information on self-contracted curves,
and\cite[Section~6]{DLS} for further examples of self-contracted curves.

Recall that a curve $\gg:I \longrightarrow X$ is said to be rectifiable if 
\[ \sup\bigg\{\sum_{i=1}^{k-1}d\big( \gg(t_i), \gg(t_{i+1}) \big)
 \,\bigg\vert\, t_1,\dots,t_{k} \in I,\, t_1 < \dots < t_{k} \bigg\} < \infty. \]
It was shown that bounded self-contracted curves are rectifiable,
on the one hand, in the following spaces:
\begin{itemize}
\item{Euclidean spaces \cite{DDDL, LMV},}
\item{Riemannian manifolds \cite{DDDR},}
\item{finite-dimensional normed spaces \cite{Le, ST},}
\item{$\CAT(0)$-spaces with some additional properties \cite{OhRect}.}
\end{itemize}
These results are more or less based on the fundamental construction in the Euclidean case.
Precisely, such a construction was generalized to normed spaces and $\CAT(0)$-spaces,
and the case of Riemannian manifolds was shown by approximations and a compactness argument.

On the other hand, an observation in \cite{ZSC} revealed an interesting connection
between self-contracted curves and an embedding problem of finite metric spaces,
and the following spaces were added to the above list:
\begin{itemize}
\item{complete Finsler manifolds,}
\item{complete, locally compact $\CAT(k)$-spaces ($k \in \R$) with locally extendable geodesics,}
\item{complete, locally compact Busemann NPC spaces with locally extendable geodesics,}
\item{complete, locally compact geodesic spaces with locally extendable geodesics
satisfying the local rough Busemann condition}
\end{itemize}
(see \cite{ZSC} for the precise meaning of those conditions on geodesics and curvature bounds).
Let us stress that all the above results are on spaces with \emph{upper curvature bounds}.

The aim of the present paper is to show that the strategy in \cite{ZSC}
also applies to spaces with \emph{lower curvature bounds}.
As a result we further extend the above list with:
\begin{itemize}
\item{finite-dimensional Alexandrov spaces of curvature $\ge k$ with $k \in \R$
(Proposition~\ref{CBBs-are-ATB}),}
\item{complete Berwald spaces of nonnegative flag curvature
(Proposition~\ref{Ber-are-ATB}).}
\end{itemize}
These are consequences of our main result below (Theorem~\ref{ATB-no-SRA}).
Our approach also provides an alternative proof for normed spaces.
We remark that the finiteness of the dimension is essential
since one can easily construct counter-examples in Hilbert spaces
(see \cite[Example~2.2]{DDDR}, \cite[Example~2.6]{OhRect}).

\subsection{Small rough angle condition}

As in \cite{ZSC},
the following condition inspired by \cite{BS} will play an essential role.

\begin{definition}[Small rough angle condition; $\SRA(\aa)$]
Let $(X,d)$ be a metric space and $0 < \aa < 1$.
We say that $X$ satisfies the \emph{$\SRA(\aa)$-condition} if,
for every $x,y,z \in X$, we have 
\begin{equation}
d(x,y) \le \max\{d(x,z) + \aa d(z,y), \aa d(x,z) + d(z,y)\}. \label{SRA-ineq}
\end{equation}
\end{definition}

Note that \eqref{SRA-ineq} implies, provided $x,y \in X \setminus \{z\}$,
\begin{equation}
\wt\angle x z y \le \arccos(-\aa) =  \pi - \arccos(\aa), \label{SRA-Angles}
\end{equation}
where $\wt\angle xzy \in [0,\pi]$ denotes the \emph{Euclidean comparison angle}
defined as the angle $\angle \tilde{x} \tilde{z} \tilde{y}$
for the comparison triangle $\triangle \tilde{x} \tilde{y} \tilde{z} \subset \R^2$
having the same side lengths as $\triangle xyz \subset X$.
By the Euclidean cosine formula we have
\begin{equation}\label{eq:cosine}
\cos \wt\angle xzy =\frac{d(x,z)^2 +d(y,z)^2 -d(x,y)^2}{2d(x,z)d(y,z)}.
\end{equation}
Indeed, if the opposite inequality 
$\wt\angle xzy > \pi-\arccos(\aa)$ to \eqref{SRA-Angles} holds,
then we have
\[ \aa < \cos(\pi - \wt\angle xzy) =-\cos \wt\angle xzy \]
and \eqref{eq:cosine} implies
\begin{align*}
d(x,y)^2 &> d(x,z)^2 +d(y,z)^2 +2\alpha d(x,z) d(y,z) \\
 &> (\max\{d(x,z) + \aa d(z,y), \aa d(x,z) + d(z,y)\} )^2.
\end{align*}

In \cite{BS} the condition $\SRA(\aa)$ was used
to study the non-embedding of snowflakes (see the next subsection).
A connection with self-contracted curves was established in \cite[Theorem~1]{ZSC} as follows.
 
\begin{theorem}[$\SRA$-sets in self-contracted curves, \cite{ZSC}] \label{SRAinSC}
Assume that a metric space $(X,d)$ admits
a bounded, self-contracted, and unrectifiable curve $\gg:I \longrightarrow X$.
Then, for any $1/2 <\aa<1$ and $k \in \N$,
there exists a $k$-point subset $Y \subset \gg(I)$ satisfying the $\SRA(\aa)$-condition.
\end{theorem}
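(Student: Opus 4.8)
The plan is to exhibit the required set by selecting many parameters along $\gg$ and then extracting $k$ of the resulting points in a configuration modelled on one of the two elementary $\SRA(\aa)$-patterns in Euclidean space: an equilateral-type cluster, or a ``geometric spiral'' in which consecutive hops shrink by a fixed ratio and turn by a fixed obtuse angle. Throughout, the only information used from self-contractedness is the monotonicity obtained by taking $t_1\le t_2\le t_3$ in the defining inequality: for a fixed $s\in I$ the map $t\mapsto d(\gg(t),\gg(s))$ is non-increasing on $\{t\in I:t\le s\}$. Fixing a large $N$ (to be chosen in terms of $k$, $\aa$ and $\operatorname{diam}\gg(I)=:\Delta$), unrectifiability provides $t_1<\dots<t_m$ in $I$, which after refining we take with all consecutive distances positive, so that $\sum_{i=1}^{m-1}d(\gg(t_i),\gg(t_{i+1}))\ge N$. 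Set $r_i:=d(\gg(t_i),\gg(t_m))$; then the monotonicity gives $\Delta\ge r_1\ge\dots\ge r_m=0$, while the triangle inequality through $\gg(t_m)$ gives $d(\gg(t_i),\gg(t_{i+1}))\le r_i+r_{i+1}\le 2r_i$, hence $\sum_i r_i\ge N/2$.

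Running a dyadic/stopping-time argument on the non-increasing sequence $(r_i)$ — and here $N$ is chosen large enough that the relevant counts exceed $k$ — I would arrive at one of two situations. In the \emph{equilateral situation} there is a block $a\le i\le b$ of indices on which $r_i$ stays in a single dyadic window $(\rho/2,\rho]$ while $\gg$ nevertheless moves a large total amount over this block; then among these indices there are $\ge k$, say $i_1<\dots<i_k$, at which the step $d(\gg(t_{i_j}),\gg(t_{i_j+1}))$ is a definite fraction of $\rho$ (otherwise the total displacement in the block would be too small), and taking the endpoints $p_j:=\gg(t_{i_j+1})$ — which by the monotonicity are pairwise at distance at least that fraction of $\rho$ and all lie in the annulus of radii $(\rho/2,\rho]$ about $\gg(t_m)$ — one gets $k$ points whose pairwise distances all lie in an interval $[c\rho,2\rho]$; a short further extraction then makes this cluster close enough to equilateral that \eqref{SRA-ineq} holds. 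In the \emph{spiral situation} there is a subsequence $i_1<\dots<i_k$ with $r_{i_{j+1}}\le\ll r_{i_j}$ for a small fixed $\ll$; setting $p_j:=\gg(t_{i_j})$, the monotonicity plus the triangle inequality through $\gg(t_m)$ force $d(p_j,p_{j'})\in[(1-\ll)r_{i_j},(1+\ll)r_{i_j}]$ for all $j<j'$, so every triple $p_i,p_j,p_l$ with $i<j<l$ has its two ``outer'' sides comparable to $r_{i_i}$ and its inner side $d(p_j,p_l)$ smaller by the factor $\approx\ll$, and moreover $d(p_j,p_l)\le d(p_i,p_l)$.

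The decisive step, which I expect to be the main obstacle, is verifying \eqref{SRA-ineq} for every triple in the spiral situation. Running the cosine computation from the first pages of the paper in reverse, for the longest side of such a triple and the comparison angle $\wt\angle$ opposite to it — a vertex carrying one long and one short incident edge — \eqref{SRA-ineq} is equivalent, up to an error tending to $0$ with $\ll$, to that angle being $\le\pi-\arccos(\aa)$, i.e.\ exactly to \eqref{SRA-Angles}, while the form of \eqref{SRA-ineq} whose designated middle vertex is the earliest of the three is automatic from the monotonicity $d(p_j,p_l)\le d(p_i,p_l)$. So everything comes down to keeping all these comparison angles bounded away from $\pi$, i.e.\ to forbidding \emph{near-collinear} selected triples — ones in which a selected point lies essentially on a geodesic between two others, the triangle defect $d(x,z)+d(z,y)-d(x,y)$ being small relative to $d(z,y)$ — and this is the point where unrectifiability must genuinely be used. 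Here I would argue by contradiction: if no choice of a geometric subsequence could avoid near-collinear triples, then telescoping the near-equalities $d(\gg(t_a),\gg(t_c))\approx d(\gg(t_a),\gg(t_b))+d(\gg(t_b),\gg(t_c))$ along every finite increasing chain of parameters would bound the length of each such chain by $\Delta/(1-\ee)$ for a fixed small $\ee$, contradicting $\sum_i d(\gg(t_i),\gg(t_{i+1}))\to\infty$; hence one can always select so that the relevant angles stay below $\pi$ by a definite $\eta(\aa)>0$. Finally one fixes the free parameters ($N$ large, $\ll$ and the step-fraction $c$ small) in terms of $\eta(\aa)$ and $1-\aa$ — the hypothesis $\aa>1/2$ enters here, since it leaves the generous margin $\pi-\arccos(\aa)>2\pi/3$, enough to accommodate the right-angle and equilateral subconfigurations that inevitably occur — so that \eqref{SRA-ineq} holds for all $\binom{k}{3}$ triples; then $\{p_1,\dots,p_k\}\subset\gg(I)$ is the desired $k$-point $\SRA(\aa)$-set.
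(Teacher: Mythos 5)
Your reduction of the problem is sound as far as it goes: for a triple $p_i,p_j,p_l$ in time order, the instance of \eqref{SRA-ineq} whose middle vertex is the earliest point does follow from self-contractedness, and the two remaining instances amount to a quantitative non-collinearity, namely a lower bound $d(p_i,p_j)+d(p_j,p_l)-d(p_i,p_l)\ge(1-\alpha)\,d(p_j,p_l)$ (and its analogue with $j,l$ exchanged), which in your spiral regime (short side at most $\lambda$ times the long sides) is indeed what \eqref{SRA-ineq} needs. You correctly flag this exclusion of near-collinear triples as the decisive step where unrectifiability must enter --- but exactly there the proposal stops being a proof. The contradiction you sketch has a quantifier gap: the negation of ``some geometric subsequence avoids near-collinear triples'' only says that every candidate $k$-subset contains \emph{at least one} bad triple; to telescope as you propose you need near-collinearity of all triples anchored at the first point of one long chain, for then $d(x_1,x_{j+1})\ge d(x_1,x_j)+(1-\epsilon)d(x_j,x_{j+1})$ does sum to give length at most $\Delta/(1-\epsilon)$. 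Passing from the former to the latter requires a genuine combinatorial extraction (a Ramsey-type argument on triples), and even that route does not close the gap, because the large all-bad subset it would produce need not carry a large sum of consecutive distances, so no contradiction with unrectifiability follows. The proof in \cite{ZSC}, which this paper only cites and does not reproduce, arranges the choice of points so that the needed defect bound holds by construction rather than by such a contradiction; supplying that selection mechanism is the missing idea here.

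The preparatory dichotomy is also only asserted, and the justification given in the ``equilateral'' branch is incorrect: a dyadic block can contain arbitrarily many indices whose individual steps are all tiny while the total movement in the block is huge (self-contractedness does not prevent the curve from having enormous length inside a small ball at distance about $\rho$ from $\gamma(t_m)$), so ``large total displacement in the block'' does not force $k$ steps of size a definite fraction of $\rho$, and hence does not yield the separated cluster you need. In addition, the ``short further extraction'' to a near-equilateral configuration needs its own pigeonhole/Ramsey step (all pairwise distances within a factor $1+\alpha$ of each other), which is not spelled out. So both branches have holes, and the central one --- a constructive way to guarantee, using unrectifiability quantitatively, that every triple of the selected points has triangle defect at least $(1-\alpha)$ times its short side --- is not provided.
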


We remark that, if $\gamma(s)=\gamma(t)$ for $s<t$,
then $\gamma$ is constant on $[s,t]$ by the self-contractedness (see \cite[Lemma~2.1]{OhRect}).
Thus the unrectifiability implies that the image of $\gamma$ is indeed an infinite set.
Theorem~\ref{SRAinSC} asserts that, in other words,
if a metric space does not admit any large subset satisfying $\SRA(\aa)$,
then every bounded self-contracted curve in it is rectifiable.
Along this strategy of \cite{ZSC},
we introduce metric spaces with weak lower curvature bound
and show that they do not contain large $\SRA(\aa)$-subsets.

\subsection{Angular total boundedness and main theorems}

We denote by $B_R(x)$ the open ball of center $x$ and radius $R$.

\begin{definition}[Angular total boundedness; $\ATB(\ee)$]\label{ATB}
Let $(X,d)$ be a metric space and $0<\ee<\pi/2$.
We say that a point $p \in X$ satisfies the \emph{$\ATB(\ee)$-condition}
if there exist some $L \in \N$ and $R > 0$ such that,
for every $y_1,\dots,y_L \in B_R(p) \setminus \{p\}$,
we can find $i \neq j$ satisfying
\[ \wt\angle y_i p y_j < \ee. \]
We say that $X$ is a \emph{globally $\ATB(\ee)$-space}
if there exists $L \in \N$ such that every $p \in X$ satisfies the $\ATB(\ee)$-condition
with constants $L$ and $R = \infty$.
We say that $X$ is a \emph{semi-globally $\ATB(\ee)$-space} if,
for any $x \in X$ and  $R>0$, there exists $L \in \N$ 
such that every $p \in B_R(x) $ satisfies the $\ATB(\ee)$-condition with constants $L$ and $R$.
Finally we say that $X$ is a \emph{locally $\ATB(\ee)$-space} if, for any $x \in X$,
there exist $L \in \N$ and $R > 0$ such that every $p \in B_R(x) $ satisfies the $\ATB(\ee)$-condition
with constants $L$ and $R$.
\end{definition}

Roughly speaking, the $\ATB(\ee)$-condition asserts
the total boundedness with respect to the comparison angle.
Examples of locally $\ATB(\ee)$-spaces include:
\begin{itemize}
\item{Finsler manifolds (Proposition~\ref{Fin-are-ATB}),}
\item{complete, locally compact $\CAT(k)$-spaces ($k \in \R$) with locally extendable geodesics
(Proposition~\ref{CATExt-are-ATB}(\ref{CATk})).} 
\end{itemize}
Next we have semi-globally $\ATB(\ee)$-spaces:
\begin{itemize}
\item{finite-dimensional Alexandrov spaces of curvature $\ge k$ with $k \in \R$
(Proposition~\ref{CBBs-are-ATB}),}
\item{complete, locally compact Busemann NPC spaces 
(e.g., $\CAT(0)$-spaces) with locally extendable geodesics
(Proposition~\ref{CATExt-are-ATB}(\ref{B-NPC})).}
\end{itemize}
Finally, globally $\ATB(\ee)$-spaces include:
\begin{itemize}
\item{finite-dimensional Alexandrov spaces of nonnegative curvature
(Proposition~\ref{CBBs-are-ATB}),}
\item{finite-dimensional normed spaces
(Proposition~\ref{pr:norm}),}
\item{complete Berward spaces of nonnegative flag curvature
(Proposition~\ref{Ber-are-ATB}),}
\item{Cayley graphs of virtually abelian groups
(Proposition~\ref{virt-abel}).}
\end{itemize}

Notice that the class of $\ATB(\ee)$-spaces is so large
that it admits both lower and upper curvature bounds,
as well as both Riemannian and Finsler spaces.

Our main theorem is the following.

\begin{theorem}[$\ATB$ prevents large $\SRA$-sets]\label{ATB-no-SRA}
Let $(X,d)$ be a metric space, $0<\aa<1$ and $\ee :=\arccos(\aa)/2 > 0$.
Given $x \in X$ and $R > 0$, suppose that there is $L \in \N$ such that
every $p \in B_R(x)$ satisfies the $\ATB(\ee)$-condition with constants $L$ and $2R$.
Then there exists $N(L) \in \N$ such that there is no $N$-point $\SRA(\aa)$-subset in $B_R(x)$.   
\end{theorem}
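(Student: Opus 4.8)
The plan is to argue by contradiction: suppose $Y = \{y_1, \dots, y_N\} \subset B_R(x)$ is an $\SRA(\aa)$-subset with $N$ very large (to be chosen as a function of $L$), and derive a violation of the $\ATB(\ee)$-condition at some well-chosen center point $p \in B_R(x)$. The natural candidate for $p$ is a point of $Y$ that is ``extremal'' in some sense — concretely, I would order the points so that $y_1$ is a point of $Y$ realizing (or nearly realizing) the diameter of $Y$, or more usefully build a maximal chain. The key structural fact I want to extract from $\SRA(\aa)$ is that it forces comparison angles at $p$, viewed toward the remaining points, to be \emph{large}: by \eqref{SRA-Angles}, for any $z$ and any $y_i, y_j$ distinct from $z$ we have $\wt\angle y_i z y_j \le \pi - \arccos(\aa)$, but this is an \emph{upper} bound, so it is not directly what I need. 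Instead the mechanism should be: along a suitable ordering $y_1, \dots, y_N$ with rapidly decreasing (or increasing) distances, consecutive triples are forced by \eqref{SRA-ineq} to have one small angle and one angle close to $\pi$, and one arranges that the points $y_2, \dots, y_N$, seen from $p = y_1$, are pairwise separated in comparison angle by at least $2\ee = \arccos(\aa)$.

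The central lemma I would isolate is therefore: \emph{an $N$-point $\SRA(\aa)$-set contains a subset of $\lfloor c \log N \rfloor$ points $y_1, \dots, y_m$ (for an absolute constant $c$) such that $\wt\angle y_i y_1 y_j \ge \arccos(\aa) = 2\ee$ for all $2 \le i < j \le m$.} To prove this, I would pass to a subset on which the distances $d(y_1, y_i)$ are geometrically decreasing, say $d(y_1, y_{i+1}) \le \tfrac12 d(y_1, y_i)$ — such a subset of size $\gtrsim \log N / \log(\text{diam}/\text{min-dist})$ exists, and here one needs a preliminary reduction (a pigeonhole on dyadic distance scales, or iterating ``take the closest point and throw away everything within a factor-$2$ ball'') to guarantee there are many well-separated scales; the boundedness of $Y$ inside $B_R(x)$ together with the fact that $Y$ is finite makes the ratio finite but a priori unbounded, so this step requires care — this is where I expect the main obstacle. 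Given such a geometrically decaying chain, for $i < j$ we have $d(y_1, y_j) \le \tfrac12 d(y_1, y_i)$, and \eqref{SRA-ineq} applied to the triple $(y_i, y_j; z = y_1)$ — rearranged via \eqref{eq:cosine} — forces $\cos \wt\angle y_i y_1 y_j$ to be bounded above by a quantity that, after using $d(y_1,y_j) \le \tfrac12 d(y_1,y_i)$, stays below $\aa$; hence $\wt\angle y_i y_1 y_j \ge \arccos(\aa) = 2\ee > \ee$. (One may need to shrink the ratio $\tfrac12$ to a smaller $\rho = \rho(\aa)$ to make this inequality go through cleanly; that only changes the constant $c$.)

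Now I finish: by the lemma, an $N$-point $\SRA(\aa)$-subset of $B_R(x)$ yields a point $p = y_1 \in B_R(x)$ and points $y_2, \dots, y_m \in B_{2R}(p) \setminus \{p\}$ — here $y_i \in B_R(x) \subset B_{2R}(p)$ since $d(p, y_i) \le \mathrm{diam}(B_R(x)) < 2R$ — with $m \ge c \log N$ and all pairwise comparison angles $\wt\angle y_i p y_j \ge 2\ee > \ee$. But $p$ satisfies $\ATB(\ee)$ with constants $L$ and $2R$, so as soon as $m \ge L$ two of these points have comparison angle $< \ee$ at $p$, a contradiction. It therefore suffices to choose $N = N(L)$ large enough that $c \log N \ge L$, i.e.\ $N(L) := \lceil e^{L/c} \rceil$, which proves the theorem. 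I expect the routine parts to be the trigonometric manipulation in the lemma and the ball-inclusion bookkeeping; the one genuinely delicate point is ensuring enough separated distance scales to make the geometric-chain argument produce $\gtrsim \log N$ points regardless of how the distances in $Y$ are distributed.
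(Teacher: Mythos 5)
Your reduction to a geometrically decaying chain is where the argument breaks, and it is not a repairable technicality. An $N$-point $\SRA(\aa)$-set need not contain \emph{any} pair of well-separated distance scales from a single base point: for example, an equilateral set (all pairwise distances equal to $1$) satisfies \eqref{SRA-ineq} for every $\aa>0$, since $1\le \max\{1+\aa,\aa+1\}$, and such sets can have arbitrarily many points. For such a set the dyadic pigeonhole produces only $O(1)$ scales, so no chain with $d(y_1,y_{i+1})\le \tfrac12 d(y_1,y_i)$ of length $\gtrsim\log N$ exists, and your central lemma has no proof along the proposed lines. Moreover the lemma is false as you stated it: in the equilateral set all comparison angles equal $\pi/3$, which is strictly less than $2\ee=\arccos(\aa)$ whenever $\aa<1/2$, so no large subset with pairwise angles $\ge 2\ee$ at a common vertex exists at all (only the weaker bound $>\ee$ can be hoped for). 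There is also a direction issue in the trigonometric step: applying \eqref{SRA-ineq} with vertex $z=y_1$ gives an \emph{upper} bound on $d(y_i,y_j)$, hence an upper bound on $\wt\angle y_i y_1 y_j$ via \eqref{eq:cosine}; to bound that angle from below you must invoke the instance of \eqref{SRA-ineq} whose left-hand side is the long distance $d(y_1,y_i)$ with vertex $y_j$, which yields $d(y_i,y_j)\ge d(y_1,y_i)-\aa d(y_1,y_j)$ and then only $\cos\wt\angle y_i y_1 y_j\le \aa+\tfrac{(1-\aa^2)}{2}\,d(y_1,y_j)/d(y_1,y_i)$, i.e.\ an angle slightly above $\ee$ but not $2\ee$. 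Finally, your argument ignores that an $\SRA(\aa)$-set may well contain triples with very small angle at a given vertex (take $d(y_1,y_i)=d(y_1,y_j)=1$ and $d(y_i,y_j)$ tiny: all three instances of \eqref{SRA-ineq} hold), so selecting a sub-configuration with \emph{all} pairwise angles $\ge\ee$ at one vertex genuinely requires a combinatorial extraction, not just a metric one.

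The paper supplies exactly that missing combinatorial mechanism: it iterates Ramsey's theorem, at each stage coloring pairs $\{v,w\}$ according to whether $\wt\angle v z_m w<\ee$, and uses the $\ATB(\ee)$-hypothesis to exclude large cliques of the second color, thereby extracting points $z_1,\dots,z_{L+1}$ with $\wt\angle z_j z_i z_k<\ee$ for all $i<j<k$. The $\SRA(\aa)$-condition enters only once, through \eqref{SRA-Angles}, giving $\wt\angle z_i z_j z_k\le\pi-2\ee$ at the middle vertex; since the three angles of the Euclidean comparison triangle sum to $\pi$, this forces $\wt\angle z_i z_{L+1} z_j>\ee$ for all $i<j\le L$, contradicting $\ATB(\ee)$ at $p=z_{L+1}$. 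Note that this angle-sum trick is precisely what replaces your attempted metric chain: it converts smallness of angles at the earlier vertices into largeness of angles at the last vertex, with no assumption on the distribution of distances, and it handles the equilateral-type configurations on which your scale-separation argument produces nothing.
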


We give the proof of Theorem~\ref{ATB-no-SRA} in Section~\ref{ATB-no-SRA-proof}. 
Combining Theorem~\ref{ATB-no-SRA} with Theorem~\ref{SRAinSC},
we obtain the following.
A metric space is said to be \emph{proper} if every bounded closed set is compact.

\begin{theorem}[Rectifiability of self-contracted curves]\label{ATB-to-SC}
Let $0<\ee<\pi/6$. If $(X,d)$ is a  semi-globally $\ATB(\ee)$-space or a 
proper locally $\ATB(\ee)$-space,
then every bounded self-contracted curve in $X$ is rectifiable.
\end{theorem}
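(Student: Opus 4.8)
The plan is to combine the two main theorems of the paper, Theorem~\ref{SRAinSC} and Theorem~\ref{ATB-no-SRA}, via a contradiction argument, handling the semi-global and local cases in a parallel fashion. Suppose, for contradiction, that $\gg:I\longrightarrow X$ is a bounded self-contracted curve that is \emph{not} rectifiable. Since $\gg$ is bounded, its image lies in some ball $B_{R_0}(x_0)$. Fix $\aa := \cos(2\ee) \in (1/2,1)$; the hypothesis $\ee < \pi/6$ guarantees $2\ee < \pi/3$, so $\aa > 1/2$, which is exactly the range in which Theorem~\ref{SRAinSC} applies, and moreover $\ee = \arccos(\aa)/2$ as required by Theorem~\ref{ATB-no-SRA}. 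By Theorem~\ref{SRAinSC}, for this $\aa$ and every $k\in\N$ there is a $k$-point subset $Y_k \subset \gg(I) \subset B_{R_0}(x_0)$ satisfying the $\SRA(\aa)$-condition.

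Next I would extract, from whichever $\ATB$-hypothesis is assumed, a single radius $R$ and a single cardinality bound $L$ valid on the relevant ball. In the semi-globally $\ATB(\ee)$ case this is immediate: applying the definition with the center $x_0$ and radius $2R_0$ (say) produces an $L \in \N$ such that every $p \in B_{2R_0}(x_0)$ satisfies $\ATB(\ee)$ with constants $L$ and $2R_0$; then Theorem~\ref{ATB-no-SRA}, applied with $R = R_0$ (note $B_{R_0}(x_0) \subset B_{2R_0}(x_0)$ and $2R = 2R_0$), yields $N(L)\in\N$ with no $N(L)$-point $\SRA(\aa)$-subset in $B_{R_0}(x_0)$. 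Taking $k = N(L)$ above contradicts the existence of $Y_{N(L)}$, and we are done.

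For the proper locally $\ATB(\ee)$ case the argument needs one extra ingredient, since local $\ATB$ only gives a radius $R(x)$ and bound $L(x)$ near each point, with no control as $x$ ranges over a large ball. Here I would use properness: the closed ball $\overline{B_{R_0}(x_0)}$ is compact, so it is covered by finitely many balls $B_{R(x_i)/2}(x_i)$, $i = 1,\dots,m$, with each $p \in B_{R(x_i)/2}(x_i)$ satisfying $\ATB(\ee)$ with constants $L(x_i)$ and $R(x_i)$. Let $r := \tfrac12\min_i R(x_i)/2 > 0$ be a Lebesgue-type number and $L := \max_i L(x_i)$. Then every $p$ in a suitable small ball satisfies $\ATB(\ee)$ with constants $L$ and $2r$, so Theorem~\ref{ATB-no-SRA} forbids $N(L)$-point $\SRA(\aa)$-subsets in any ball of radius $r$ centered in $\overline{B_{R_0}(x_0)}$. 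This is not yet a contradiction because $Y_k$ need not lie in such a small ball; to finish I would invoke the key structural fact (from \cite{OhRect}, already cited in the excerpt) that a bounded self-contracted curve has bounded total variation on each \emph{subarc} localized to a small ball is false in general, so instead I expect the right move is: pass to a subcurve $\gg|_{[a,b]}$ whose image has diameter less than $r$ and which is still unrectifiable. Such a subcurve exists because the image of $\gg$ is infinite (as noted after Theorem~\ref{SRAinSC}) and lies in a compact set, hence has an accumulation point $p_\infty$; near $p_\infty$ the self-contractedness forces the curve, once it enters $B_{r/2}(p_\infty)$, to stay within a controlled neighborhood — indeed by monotonicity of distances to a fixed point, after the last entry time the whole tail has diameter $O(r)$ — and the tail is still unrectifiable since only finitely much length was discarded. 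Applying Theorem~\ref{SRAinSC} to this tail produces $\SRA(\aa)$-subsets of every size inside a ball of radius $r$, contradicting the bound from Theorem~\ref{ATB-no-SRA}.

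The main obstacle, and the only place requiring genuine care, is this last localization step in the proper local case: one must argue that an unrectifiable bounded self-contracted curve admits an unrectifiable subarc of arbitrarily small diameter. I expect this to follow cleanly from the monotonicity built into self-contractedness — the function $t \mapsto d(\gg(t), q)$ is essentially controlled for fixed $q$, so the curve cannot oscillate in and out of a small ball infinitely often while staying bounded — together with the observation that removing a rectifiable (hence finite-length) initial segment cannot rectify the remainder. Everything else is bookkeeping: matching the constant $\aa = \cos 2\ee$ to the hypotheses of both input theorems, and chasing the radii through the $B_R$ versus $B_{2R}$ discrepancy in the statement of Theorem~\ref{ATB-no-SRA}.
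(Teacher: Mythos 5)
Your semi-global argument is exactly the paper's (take $\alpha=\cos 2\varepsilon\in(1/2,1)$, feed Theorem~\ref{SRAinSC} into Theorem~\ref{ATB-no-SRA}; the $R$ versus $2R$ bookkeeping is handled correctly), so that half is fine. The genuine gap is in the local case, precisely at the localization step you yourself flag. Self-contractedness only makes $t\mapsto d(\gamma(t),q)$ non-increasing when $q$ is a limit of $\gamma$ along \emph{increasing} parameter values (a ``future'' point); for an arbitrary accumulation point $p_\infty$ of the image there is no such monotonicity, so the tail after the last entry into $B_{r/2}(p_\infty)$ need not stay near $p_\infty$. Likewise ``the tail is still unrectifiable since only finitely much length was discarded'' is unjustified: the infinite length may sit entirely in an initial portion of the curve. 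A concrete obstruction lives in the metric tree $\BB$ of Section~\ref{Counter}: the step curve $\gamma(0)=(0,0)$, $\gamma(t)=y_n=(1/n,1/n)$ for $t\in(1/(n+1),1/n]$ is bounded and self-contracted (all relevant distances to future points are equal to $2t_{n}$ for the latest index $n$), every initial piece $\gamma|_{[0,\delta]}$ has infinite length, every tail $\gamma|_{[\delta,1]}$ is rectifiable, and the distance from $\gamma(t)$ to the unique accumulation point $(0,0)$ \emph{increases} along the curve. So ``pick an accumulation point and take the tail after the last entry time'' produces nothing here, even though small-diameter unrectifiable subarcs do exist (namely $\gamma|_{(0,\delta]}$). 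To repair your route one must first locate a parameter $t^*$ at which the infinite length accumulates from one side (a bisection/compactness argument on the parameter interval, using additivity of length over subintervals), and then prove that the corresponding one-sided limit of $\gamma$ at $t^*$ exists: for accumulation from the left your monotonicity argument applies (the limit point is then a limit of future points), but for accumulation from the right (as in the example) one needs a separate argument combining self-contractedness with properness of $X$ to identify the right limit, after which the one-sided pieces at $t^*$ are unrectifiable and of arbitrarily small diameter.

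Note also that the paper's own treatment of the local case bypasses this localization entirely: by properness the image of $\gamma$ is covered by finitely many balls of the covering furnished by the local $\ATB(\varepsilon)$-condition, the restriction of $\gamma$ to one of these balls must remain unrectifiable, and the semi-global argument is rerun inside that single ball. Your reduction to ``no $N(L)$-point $\SRA(\alpha)$-subset in any ball of radius $r$ centered in $\overline{B_{R_0}(x_0)}$'' is sound (modulo the Lebesgue-number bookkeeping), but as written the proof is incomplete at the localization step described above.
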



The following aforementioned results are now given as a corollary.

\begin{corollary}\label{BBC-to-SC}
Bounded self-contracted curves in finite-dimensional Alexandrov spaces of curvature $\ge k$
or in complete Berwald spaces of nonnegative flag curvature are rectifiable.
\end{corollary}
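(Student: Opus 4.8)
The plan is to derive Corollary~\ref{BBC-to-SC} by combining Theorem~\ref{ATB-to-SC} with the classification of $\ATB(\ee)$-spaces announced in the introduction. Concretely, for finite-dimensional Alexandrov spaces of curvature $\ge k$ we invoke Proposition~\ref{CBBs-are-ATB}, which asserts that such a space is semi-globally $\ATB(\ee)$ for every $0<\ee<\pi/2$ (indeed globally $\ATB(\ee)$ when $k\ge 0$); in particular one may fix any $\ee\in(0,\pi/6)$. Likewise, for complete Berwald spaces of nonnegative flag curvature we invoke Proposition~\ref{Ber-are-ATB}, which gives that they are globally $\ATB(\ee)$ for all $\ee\in(0,\pi/2)$, hence in particular semi-globally $\ATB(\ee)$ for some $\ee<\pi/6$. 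In either case the hypothesis of Theorem~\ref{ATB-to-SC} is met, so every bounded self-contracted curve is rectifiable.

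First I would state precisely the value of $\ee$ to be used: choose $\ee$ with $0<\ee<\pi/6$ arbitrarily, say $\ee=\pi/12$. Then I would quote Proposition~\ref{CBBs-are-ATB} to conclude that a finite-dimensional Alexandrov space $X$ with curvature bounded below is semi-globally $\ATB(\ee)$; since the semi-global branch of Theorem~\ref{ATB-to-SC} requires no properness assumption, no further hypotheses on $X$ are needed. Apply Theorem~\ref{ATB-to-SC} to obtain rectifiability of every bounded self-contracted curve in $X$. For the Berwald case, I would quote Proposition~\ref{Ber-are-ATB} to get that a complete Berwald space $Y$ of nonnegative flag curvature is globally $\ATB(\ee)$, which is a fortiori semi-globally $\ATB(\ee)$, and again apply Theorem~\ref{ATB-to-SC}.

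Strictly speaking, Corollary~\ref{BBC-to-SC} is an immediate consequence of the already-established Theorem~\ref{ATB-to-SC} together with Propositions~\ref{CBBs-are-ATB} and \ref{Ber-are-ATB}; the only point requiring a word of care is the range of $\ee$. The propositions provide the $\ATB(\ee)$-property for \emph{every} $\ee\in(0,\pi/2)$, and in particular for some (indeed all) $\ee<\pi/6$, which is exactly what Theorem~\ref{ATB-to-SC} demands. Thus there is no genuine obstacle here: the real content lies in Theorem~\ref{ATB-no-SRA} (whose proof is deferred to Section~\ref{ATB-no-SRA-proof}) and in the verification of the $\ATB(\ee)$-condition carried out in Propositions~\ref{CBBs-are-ATB} and \ref{Ber-are-ATB}. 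If one wanted a self-contained argument within the corollary, the only nontrivial step would be recalling why finite-dimensional Alexandrov spaces satisfy angular total boundedness — this follows from the compactness of the space of directions and the (one-sided) angle comparison, but that is precisely the job of Proposition~\ref{CBBs-are-ATB} and need not be repeated here.
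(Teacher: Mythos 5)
Your proposal is correct and matches the paper's own (implicit) derivation: the corollary is obtained exactly by feeding Proposition~\ref{CBBs-are-ATB} (semi-global, indeed global when $k\ge 0$) and Proposition~\ref{Ber-are-ATB} (global, hence semi-global) into the semi-global branch of Theorem~\ref{ATB-to-SC} with any $\ee<\pi/6$. Your remark that the propositions give $\ATB(\ee)$ for all $\ee\in(0,\pi/2)$, so the range restriction $\ee<\pi/6$ is harmless, is the only point needing care, and you handled it correctly.
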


\subsection{Non-embeddability of snowflakes}

The $\SRA(\aa)$-condition finds applications in the embedding problem
via the following observation found in \cite[(3.3)]{BS}.
Given a metric space $(Y,d)$ and $0<\aa<1$,
we denote by $(Y,d^{\aa})$ its \emph{$\aa$-snowflake}
defined by $d^{\aa}(x,y):=d(x,y)^{\aa}$.

\begin{proposition}[Snowflakes satisfy $\SRA$, \cite{BS}]\label{SAinSN}
For any metric space $(Y,d)$ and $0<\aa<1$,
the $\aa$-snowflake $(Y,d^{\aa})$ satisfies the $\SRA(\aa)$-condition.
\end{proposition}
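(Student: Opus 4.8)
The plan is to reduce \eqref{SRA-ineq} for the snowflake metric $d^{\aa}$ to a one-variable elementary estimate. Fix $x,y,z \in Y$ and set $a := d(x,z)$, $b := d(z,y)$, so that the inequality to be proved reads
\[ d(x,y)^{\aa} \le \max\{ a^{\aa} + \aa\, b^{\aa},\ \aa\, a^{\aa} + b^{\aa} \}. \]
The right-hand side is unchanged if we swap the roles of $x$ and $y$, and so is the left-hand side; hence we may assume $a \ge b$. Then $a^{\aa} \ge b^{\aa}$, and since $A + \aa B \ge \aa A + B$ whenever $A \ge B \ge 0$, the maximum equals $a^{\aa} + \aa\, b^{\aa}$. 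Using the triangle inequality $d(x,y) \le a + b$ in $(Y,d)$ together with the monotonicity of $t \mapsto t^{\aa}$, it therefore suffices to prove
\[ (a+b)^{\aa} \le a^{\aa} + \aa\, b^{\aa} \qquad \text{for all } a \ge b \ge 0. \]

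Next I would handle the degenerate case $a = 0$ (which forces $b = 0$, making both sides $0$) and otherwise divide through by $a^{\aa} > 0$. Writing $t := b/a \in [0,1]$, the claim becomes
\[ (1+t)^{\aa} \le 1 + \aa\, t^{\aa}, \qquad t \in [0,1]. \]
To prove this I would combine two observations. First, since $s \mapsto s^{\aa}$ is concave on $[0,\infty)$, its graph lies below the tangent line at $s = 1$, i.e.\ $s^{\aa} \le 1 + \aa(s-1)$ for all $s \ge 0$; evaluating at $s = 1+t$ gives $(1+t)^{\aa} \le 1 + \aa t$. Second, because $0 < \aa < 1$ and $0 \le t \le 1$ we have $t = t^{1} \le t^{\aa}$, hence $1 + \aa t \le 1 + \aa\, t^{\aa}$. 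Chaining these yields $(1+t)^{\aa} \le 1 + \aa\, t^{\aa}$, as desired.

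There is no genuinely hard step here; the only points requiring a little care are bookkeeping ones. One must make sure the correct branch of the maximum in \eqref{SRA-ineq} is retained after the reduction to $a \ge b$, and one should note that the second ingredient $\aa t \le \aa\, t^{\aa}$ — and in fact the whole estimate $(1+t)^{\aa} \le 1 + \aa\, t^{\aa}$ — is valid precisely on the range $t \in [0,1]$ produced by the normalization; it fails for $t > 1$, which is exactly why the preliminary reduction $a \ge b$ is needed. Consistently, as $\aa \to 1$ the inequality degenerates to the triangle inequality.
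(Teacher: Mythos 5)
Your proof is correct. Note that the paper itself does not prove this proposition at all: it simply cites \cite[(3.3)]{BS}, so there is no in-paper argument to compare against. Your reduction is sound: the symmetry in $x,y$ lets you assume $a=d(x,z)\ge b=d(z,y)$, in which case the maximum in \eqref{SRA-ineq} is indeed $a^{\aa}+\aa b^{\aa}$ (since $(1-\aa)(a^{\aa}-b^{\aa})\ge 0$), and the triangle inequality in $(Y,d)$ plus monotonicity of $t\mapsto t^{\aa}$ reduces everything to $(1+t)^{\aa}\le 1+\aa t^{\aa}$ for $t\in[0,1]$, which you establish correctly by chaining the tangent-line estimate $(1+t)^{\aa}\le 1+\aa t$ (concavity at $s=1$) with $t\le t^{\aa}$ on $[0,1]$. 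Your closing remark is also accurate: the one-variable inequality fails for $t>1$, so the preliminary normalization $a\ge b$ is genuinely needed, and this is exactly the kind of elementary concavity argument the cited source relies on. The proposal can stand as a self-contained proof of the proposition.
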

 
Combining this with Theorem~\ref{ATB-no-SRA} leads the following.

\begin{theorem}[$\ATB$ prevents large snowflakes]\label{SnowComp}
Let $(X,d)$ be a metric space, $0<\aa<1$ and $\ee:=\arccos(\aa)/2$.
Given $x \in X$ and $R > 0$, suppose that
there is $L \in \N$ such that
every $p \in B_R(x)$ satisfies the $\ATB(\ee)$-condition with constants $L$ and $2R$.
Then there exists $N =N(L) \in \N$ such that,               
if a metric space $(Y,d_Y)$ has cardinality at least $N$,
then its $\alpha$-snowflake $(Y,d_Y^{\aa})$ does not admit an isometric embedding into $B_R(x)$.
\end{theorem}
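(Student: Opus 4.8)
The plan is to obtain Theorem~\ref{SnowComp} as a direct combination of Proposition~\ref{SAinSN} and Theorem~\ref{ATB-no-SRA}; the statement is essentially a reformulation of ``$B_R(x)$ contains no large $\SRA(\aa)$-subset'' in the language of snowflake embeddings. First I would let $N = N(L) \in \N$ be the integer furnished by Theorem~\ref{ATB-no-SRA} applied to the given data $x$, $R$, $\aa$, and $\ee = \arccos(\aa)/2$; the hypothesis we are assuming on $B_R(x)$ is precisely the one that theorem requires, so this choice is legitimate.

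Next I would argue by contradiction. Suppose $(Y,d_Y)$ has cardinality at least $N$ and that $\iota \colon (Y, d_Y^{\aa}) \to B_R(x)$ is an isometric embedding. Since $d_Y$ is a metric, $d_Y(y,y') > 0$, and hence $d_Y^{\aa}(y,y') = d_Y(y,y')^{\aa} > 0$, for all $y \neq y'$; thus $\iota$ is injective and $\iota(Y) \subset B_R(x)$ has at least $N$ elements. By Proposition~\ref{SAinSN}, $(Y, d_Y^{\aa})$ satisfies the $\SRA(\aa)$-condition. The defining inequality \eqref{SRA-ineq} is a statement quantified over triples of points involving only their mutual distances, and $\iota$ preserves these distances, so $\iota(Y)$, endowed with the metric induced from $X$, again satisfies $\SRA(\aa)$.

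Finally, I would use the elementary fact that the $\SRA(\aa)$-condition is hereditary: any subset of a metric space satisfying \eqref{SRA-ineq} still satisfies it, since the condition is universally quantified over triples. Picking any $N$-point subset $Z \subseteq \iota(Y)$, we obtain an $N$-point $\SRA(\aa)$-subset of $B_R(x)$, which contradicts Theorem~\ref{ATB-no-SRA} for our choice $N = N(L)$. Hence no such embedding can exist.

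There is no genuine obstacle here: the whole content sits in Theorem~\ref{ATB-no-SRA}, and the remaining steps — injectivity of an isometric embedding of a genuine snowflake metric, invariance of $\SRA(\aa)$ under isometries, and its stability under passing to subsets — are all routine. The only point worth making explicit in the write-up is that $N$ depends on $L$ alone and not on $Y$, which is exactly what the theorem asserts.
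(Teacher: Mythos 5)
Your proposal is correct and follows exactly the route the paper intends: Theorem~\ref{SnowComp} is stated there as an immediate consequence of Proposition~\ref{SAinSN} combined with Theorem~\ref{ATB-no-SRA}, with no further argument given. Your write-up simply makes explicit the routine points (injectivity of the embedding, invariance of $\SRA(\aa)$ under isometries, heredity under subsets), which is fine.
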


As a corollary, we obtain restrictions on the isometric embeddability of large snowflakes
into Alexandrov spaces or Berwald spaces of nonnegative flag curvature.
Here we state the former for completeness.

\begin{corollary}\label{BBC-no-SF}
Let $(X,d)$ be an $n$-dimensional Alexandrov space of curvature $\ge k$ with $k \in \R$.
Then, for any $0<\aa<1$ and $R>0$, there exists $\NNN = \NNN(n,k,\aa,R) \in \N$ such that,               
if a metric space $(Y,d_Y)$ has cardinality at least $\NNN$,
then its $\alpha$-snowflake $(Y,d_Y^{\aa})$ does not admit an isometric embedding into any $R$-ball in $X$.
Moreover, if $k=0$, then we can take $R=\infty$ and $\NNN=\NNN(n,\aa)$.
\end{corollary}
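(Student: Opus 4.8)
The plan is to chain Proposition~\ref{CBBs-are-ATB} with Theorem~\ref{SnowComp}. Fix $0<\aa<1$ and put $\ee:=\arccos(\aa)/2$; since $\aa\in(0,1)$ we have $\arccos(\aa)\in(0,\pi/2)$, hence $0<\ee<\pi/4$, so the $\ATB(\ee)$-condition is meaningful. The only input beyond the statements quoted in the introduction is the \emph{uniform} form of Proposition~\ref{CBBs-are-ATB}: its proof shows that for an $n$-dimensional Alexandrov space of curvature $\ge k$ the constant $L$ witnessing the $\ATB(\ee)$-condition on a ball of radius $\rho$ can be chosen depending only on $n$, $k$, $\ee$ and $\rho$ --- not on the space nor on the center of the ball --- because if $y_1,\dots,y_L\in B_\rho(p)\setminus\{p\}$ have all pairwise comparison angles $\ge\ee$, then Toponogov comparison forces the directions at $p$ pointing to the $y_i$ to be $\delta$-separated in the space of directions $\Sigma_p$ for some $\delta=\delta(\ee,k,\rho)>0$, and $\Sigma_p$, having dimension $\le n-1$, carries at most $C(n,\delta)$ such points; when $k=0$ these directions are even pairwise $\ee$-separated with no constraint on $\rho$, so there $L$ depends on $n$ and $\ee$ only.

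Granting this, for general $k$ fix $x\in X$ and $R>0$ and apply the uniform Proposition~\ref{CBBs-are-ATB} to the ball $B_{2R}(x)$: there is $L=L(n,k,\aa,R)\in\N$ such that every $p\in B_{2R}(x)$ satisfies the $\ATB(\ee)$-condition with constants $L$ and $2R$; since $B_R(x)\subset B_{2R}(x)$, the same holds for every $p\in B_R(x)$. This is exactly the hypothesis of Theorem~\ref{SnowComp} for $B_R(x)$, which therefore provides $N=N(L)\in\N$ such that the $\aa$-snowflake of any metric space of cardinality $\ge N$ does not embed isometrically into $B_R(x)$. Setting $\NNN:=N$ gives $\NNN=\NNN(n,k,\aa,R)$, independent of the center $x$, which is the first assertion.

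For $k=0$ the uniform Proposition~\ref{CBBs-are-ATB} yields a single $L=L(n,\aa)$ such that every point of $X$ satisfies $\ATB(\ee)$ with constants $L$ and $\infty$; since shrinking the radius only weakens the condition, every point satisfies $\ATB(\ee)$ with constants $L$ and $2\rho$ for each finite $\rho>0$. Put $\NNN:=N(L)=\NNN(n,\aa)$. If a metric space $(Y,d_Y)$ with $|Y|\ge\NNN$ had its $\aa$-snowflake embedded isometrically into $X$, choose distinct $y_1,\dots,y_{\NNN}\in Y$ and a ball $B_\rho(x)$ containing all their images; by the previous sentence the hypothesis of Theorem~\ref{SnowComp} holds for $B_\rho(x)$ with constant $L$, so no $\aa$-snowflake of cardinality $\ge N(L)=\NNN$ embeds isometrically into $B_\rho(x)$, contradicting that $(\{y_1,\dots,y_{\NNN}\},d_Y^{\aa})$ does. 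Hence $R=\infty$ and $\NNN=\NNN(n,\aa)$ are admissible.

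The genuinely nontrivial ingredient is thus not in this deduction but in the uniformity of the $\ATB(\ee)$-constant, i.e.\ in the proof of Proposition~\ref{CBBs-are-ATB}; everything else is a formal combination of that proposition with Theorem~\ref{SnowComp}, and the identical scheme produces the Berwald analogue from Proposition~\ref{Ber-are-ATB}.
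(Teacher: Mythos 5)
Your proposal is correct and follows essentially the route the paper intends: Corollary~\ref{BBC-no-SF} is exactly the combination of Proposition~\ref{CBBs-are-ATB} with Theorem~\ref{SnowComp}, with the radius bookkeeping ($2R$ versus $R$) and the $k=0$, $R=\infty$ case handled as you do. Your one extra ingredient, the uniformity of $L$ in $(n,k,\ee,R)$, is indeed needed for the stated dependence of $\NNN$ and is already delivered by the paper's proof of Proposition~\ref{CBBs-are-ATB} (the constant is inherited from $\mathbb{H}^n(k)$, resp.\ $\R^n$, via Corollary~\ref{Hnk-ATB} and Lemma~\ref{LiftLemma}), so your alternative justification via packing of directions in $\Sigma_p$ is a harmless but unnecessary detour.
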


This article is organized as follows.
In Section~\ref{ATB-no-SRA-proof}, we prove Theorems~\ref{ATB-no-SRA} and \ref{ATB-to-SC}.
In Section~\ref{SecCAlemma}, we introduce a technical version of the $\ATB(\ee)$-condition
named the \emph{$\ATB^*(\ee)$-condition}, which is useful for proving $\ATB(\ee)$.
With the help of the tools introduced in Section~\ref{sc:Tools},
we give examples of locally, semi-globally, and globally $\ATB(\ee)$-spaces in Section~\ref{Examples}.
Section~\ref{QCtoSC} is devoted to the self-contractedness of gradient curves
of quasi-convex functions.
In Section~\ref{sc:Doubling}, we show that
the absence of large $\SRA(\aa)$-subsets implies the doubling condition,
as a weak converse to Theorem~\ref{ATB-no-SRA}.
Finally, in Sections~\ref{Counter} and \ref{Questions},
we discuss several intriguing (counter-)examples and open questions, respectively.

\section{Proofs of Theorems~\ref{ATB-no-SRA} and \ref{ATB-to-SC}} \label{ATB-no-SRA-proof}

The proof of Theorem~\ref{ATB-no-SRA} makes use of the Ramsey theory.
For $a,b \in \N$, we denote by $\RRR(a,b)$ the corresponding \emph{Ramsey number}.
That is to say, when we color the edges of the complete graph of $\RRR(a,b)$-vertices in two colors,
then there exists either an $a$-point subset of vertices such that all the edges
between these vertices are in the first color,
or a $b$-point subset of vertices such that all the edges
between these vertices are in the second color.

\begin{proof}[Proof of Theorem~$\ref{ATB-no-SRA}$]
The hypotheses of the theorem provides that,
for every $p \in B_R(x)$ and $y_1,\dots,y_L  \in B_R(x) \setminus \{p\}$,
there exists $1 \le i < j \le L$ such that 
\begin{equation}
\wt\angle y_i p y_j < \ee.\label{FDPC}
\end{equation}
Let us define
\[ H_{L} := \RRR(2, L), \quad
 H_{L - 1} := \RRR(H_{L} + 1, L), \quad \ldots, \quad
 H_2 := \RRR(H_3 + 1, L). \]
We shall show that we can take $N = H_2 + 1$.
By contradiction let $Z$ be an $N$-point $\SRA(\aa)$-subset in $B_R(x)$. 
Take a point $z_1$ from $Z$ and
consider the complete graph of the remaining $H_2$ points. 
We color edges of the graph in two colors.
We color an edge $\{v, w\}$ in the first color if
\[ \wt\angle v z_1 w < \ee, \]
and in the second otherwise.
By \eqref{FDPC} there is no $L$-point subset such that
all edges in the corresponding subgraph are of the second color.
Thus, by the choice of $H_2=\RRR(H_3+1,L)$,
there exists an $(H_3 + 1)$-point subset such that
all edges in the corresponding subgraph are of the first color.
We denote this set of $(H_3 +1)$ points by $\wt Z_2$, and take some $z_2 \in \wt Z_2$.
Next we set $Z_2:=\wt Z_2 \setminus \{z_2\}$
and consider the coloring by $\wt\angle v z_2 w <\ee$ or not,
find an $(H_4 +1)$-point set $\wt Z_3 \subset Z_2$ and take $z_3 \in \wt Z_3$ in the same way.
Put $Z_3:=\wt Z_3 \setminus \{z_3\}$ and iterate this procedure.
In the last step we find a $2$-point set $\wt Z_L$ and
let us denote it by $\wt Z_L =\{z_L,z_{L+1}\}$.

On the one hand,
by the definition of the first color we have $\wt \angle z_j z_i z_k < \ee$
for any $1 \le i < j < k \le L+1$.
This implies that 
\[ \wt \angle z_i z_k z_j + \wt \angle z_i z_j z_k  = \pi - \wt \angle z_j z_i z_k  > \pi - \ee. \]
On the other hand, we deduce from the $\SRA(\aa)$-condition \eqref{SRA-Angles} that
\[ \wt \angle z_i z_j z_k \le \pi -\arccos(\aa) = \pi - 2\ee. \]
Combining the last two inequalities provides
\[ \wt \angle z_i z_k z_j > \ee \]
for every  $1 \le i < j < k \le L+1$.
Therefore, if we take $p = z_{L+1}$ and  $y_i = z_i$ for $i = 1,\dots, L$,
then we have a contradiction with \eqref{FDPC}.
This completes the proof.
\end{proof}

Theorem~\ref{ATB-to-SC} is almost straightforward,
here we give a proof for completeness.

\begin{proof}[Proof of Theorem~$\ref{ATB-to-SC}$]
The semi-global case is immediate,
given a bounded, self-contracted and unrectifiable curve $\gamma$,
one can find an arbitrary large $\SRA(\alpha)$-set in the image of $\gamma$ 
with $\alpha=\cos(2\ee) \in (1/2,1)$ by Theorem~\ref{SRAinSC}.
This contradicts Theorem~\ref{ATB-no-SRA}.

In the local case, let us consider the covering $\{ B_{R_x}(x) \}_{x \in X}$ of $X$,
where every $p \in B_{R_x}(x)$ satisfies the $\ATB(\ee)$-condition 
for some $L_x \in \N$ and $R_x>0$.
Since $X$ is proper and $\gamma$ is bounded, the image of $\gamma$
is covered by finite elements of $\{ B_{R_x}(x) \}_{x \in X}$.
Then, if $\gamma$ is not rectifiable,
the restriction of $\gamma$ to one of those balls is unrectifiable.
This implies a contradiction similarly to the previous argument.
\end{proof}

\section{$\ATB^*(\ee)$-condition}\label{SecCAlemma}


The aim of this section is to introduce a technical version 
of the $\ATB(\ee)$-condition, named the $\ATB^*(\ee)$-condition. 
The following lemma connects these conditions.
For $0 < \ee < \pi/2$, we define
\begin{equation}\label{bb-prop}
\bb(\ee) := \frac{(1 -\cos\ee) \sin \ee}{2(1+\sin\ee)}.
\end{equation}
Notice that $\bb(\ee)<1/4$ and $\bb(\ee)=O(\ee^3)$ as $\ee \to 0$.

\begin{lemma}\label{CAlemma}
Let $(X,d)$ be a metric space, and take $p \in X$ and $x,y \in X \setminus \{p\}$.
If there is $x' \in X$ such that $d(p,y) =d(p,x') +d(x',y)$
and $d(x,x') \le \beta(\ee)d(p,x)$, then we have $\wt{\angle} xpy < \ee$.
\end{lemma}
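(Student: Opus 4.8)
The plan is to estimate $c:=d(x,y)$ from above using only the metric triangle inequality together with the two hypotheses, and then feed that estimate into the Euclidean cosine formula \eqref{eq:cosine}; it will then suffice to check that the resulting lower bound for $\cos\wt\angle xpy$ beats $\cos\ee$. Throughout write $a:=d(p,x)$, $b:=d(p,y)$, $s:=d(p,x')$, noting $a,b>0$ since $x,y\neq p$, and abbreviate $\bb:=\bb(\ee)$.

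First I would observe that $|a-s|=|d(p,x)-d(p,x')|\le d(x,x')\le\bb a$, so $s\ge(1-\bb)a$, while also $s\le d(p,y)=b$. Using the hypothesis $d(p,y)=d(p,x')+d(x',y)$ and the triangle inequality,
\[ c=d(x,y)\le d(x,x')+d(x',y)=d(x,x')+(b-s)\le\bb a+b-(1-\bb)a=:u, \]
so $u=b-a+2\bb a$; and from $(1-\bb)a\le b$ we get $u\ge\bb a>0$, hence $c^2\le u^2$. (The comparison angle $\wt\angle xpy$ is well defined since $|a-b|\le c\le a+b$ by the triangle inequality in $X$.) Combining \eqref{eq:cosine}, the bound $c^2\le u^2$, and the identity $1-(1-2\bb)^2=4\bb(1-\bb)$ yields
\[ \cos\wt\angle xpy=\frac{a^2+b^2-c^2}{2ab}\ge\frac{a^2+b^2-u^2}{2ab}=(1-2\bb)+\frac{2\bb(1-\bb)\,a}{b}>1-2\bb. \]

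To finish, I would check $1-2\bb\ge\cos\ee$, i.e.\ $2\bb\le1-\cos\ee$; by \eqref{bb-prop} this amounts to $\sin\ee\le1+\sin\ee$, which is trivially true. Therefore $\cos\wt\angle xpy>1-2\bb\ge\cos\ee$, and since $\wt\angle xpy\in[0,\pi]$ we conclude $\wt\angle xpy<\ee$, as desired. I do not expect any genuine difficulty: the only points needing a little care are that $u>0$ (so that $c\le u$ upgrades to $c^2\le u^2$) and that the comparison triangle is nondegenerate, both of which are immediate from $\bb(\ee)<1$. One may also note that the argument in fact goes through for every $\bb(\ee)\le(1-\cos\ee)/2$, so the cube-order choice \eqref{bb-prop} is more restrictive than necessary, and the strictness of $\wt\angle xpy<\ee$ is supplied for free by the positive term $2\bb(1-\bb)a/b$.
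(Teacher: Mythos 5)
Your proof is correct, and it organizes the estimate differently from the paper. Both arguments extract the same metric input: your bound $d(x,y)\le d(p,y)-(1-2\bb(\ee))d(p,x)$ is exactly the paper's inequality $d(p,y)\ge d(x,y)+(1-2\bb(\ee))d(p,x)$ rearranged. The paper, however, feeds this into the cosine formula at the vertex $x$, obtaining an upper bound on $\cos\wt{\angle}pxy$ that contains the ratio $d(p,x)/d(x,y)$; since that bound degenerates for small $d(x,y)$, the paper needs a two-case analysis (the case $d(x,y)<d(p,x)\sin\ee$ is settled by Young's inequality at $p$ without using $x'$, while in the case $d(x,y)\ge d(p,x)\sin\ee$ the $\sin\ee$ factor in \eqref{bb-prop} is precisely what closes the estimate), and then returns to the angle at $p$ via the angle sum of the comparison triangle. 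You instead insert the upper bound on $d(x,y)$ directly into \eqref{eq:cosine} at the vertex $p$, where no lower bound on $d(x,y)$ is required, so the case split vanishes and the whole computation reduces to $1-(1-2\bb)^2=4\bb(1-\bb)$ plus the elementary check $2\bb(\ee)\le 1-\cos\ee$; the small points you flag (positivity of $u$, strictness, degeneracy) are all handled correctly. What your route buys is brevity and the accurate observation that for this lemma any $\bb\le(1-\cos\ee)/2$ works, so the cubic-order smallness of $\bb(\ee)$ is not needed here; the paper's specific constant is tailored to its angle-at-$x$ argument and to its reuse in the $\ATB^*$-condition, which is why the statement keeps \eqref{bb-prop} even though your proof shows it is not sharp for this particular lemma.
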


\begin{proof}
First of all, if $d(x, y) < d(p, x)\sin\ee$, then we find (recalling \eqref{eq:cosine})
\[ \cos\wt{\angle} xpy =\frac{d(p,x)^2 +d(p,y)^2 -d(x,y)^2}{2d(p,x)d(p,y)}
 > \frac{\cos^2 \ee \cdot d(p,x)^2 +d(p,y)^2}{2d(p,x)d(p,y)}. \]
Thus Young's inequality shows $\cos\wt{\angle} xpy > \cos\ee$
and hence $\wt{\angle} xpy < \ee$ (without the hypothesis on the existence of $x'$).

Assume $d(x, y) \ge d(p, x)\sin\ee$ (and hence $x \neq y$).
We deduce from the triangle inequality and the hypotheses that
\begin{align}
d(p, y) &= d(p, x') + d(x', y) \ge d(p, x) + d(x, y) - 2d(x,x') \nonumber\\
 &\ge d(x, y) + (1 - 2\bb)d(p, x) \label{dc213}
\end{align}
with $\bb=\bb(\ee)$ for simplicity.
Substituting
\[ d(p,y)^2 =d(p,x)^2 +d(x,y)^2 -2d(p,x) d(x,y) \cos\wt{\angle} pxy \]
into the square of \eqref{dc213} yields
\begin{align*}
&d(p, x)^2 -2d(p,x)d(x,y) \cos\wt{\angle} pxy \\
&\ge (1 - 2\bb)^2 d(p, x)^2   + 2(1 - 2\bb)d(x, y)d(p, x).
\end{align*}
After regrouping we have 
\[ \cos\wt{\angle} pxy \le (2\bb-1) +\frac{2d(p, x)}{d(x, y)} (\bb-\bb^2). \]
Then it follows from the assumption $d(x, y) \ge d(p, x)\sin\ee$
and the definition \eqref{bb-prop} of $\bb=\bb(\ee)$ that
\[ \cos\wt{\angle} pxy \le (2\bb-1) +\frac{2(\bb-\bb^2)}{\sin\ee}
 < -1 +\frac{2(1+\sin\ee)}{\sin\ee}\bb =-\cos\ee. \]
Therefore $\wt{\angle} pxy > \pi - \ee$ and hence $\wt{\angle} xpy < \ee$.
This completes the proof.
\end{proof}

Let $X$ be a metric space and $\GG$ be some set of minimizing geodesics in $X$. 
We say that $\GG$ is a \emph{quasi-bicombing on $X$}
if for every $x,y \in X$ there is $\gg \in \GG$ with endpoints $x$ and $y$
(in particular, $X$ is a geodesic space).

\begin{definition}[$\ATB^*(\ee)$-condition]\label{ATB*}
Let $(X,d)$ be a metric space, $\GG$ be a quasi-bicombing on $X$ and $0<\ee<\pi/2$.
We say that $p \in X$ satisfies the \emph{$\ATB^*(\ee)$-condition with respect to $\GG$}
if there are $L \in \N$ and $R > 0$ such that,
for every $y_1,\dots,y_L \in B_R(p) \setminus \{p\}$ and
any $\gg_1,\dots, \gg_L \in \GG$ where $\gg_i$ is connecting $p$ and $y_i$,
we can find $i \neq j$ satisfying 
\[ \min_{x \in \gg_j} d(y_i,x) \le \bb(\ee)d(p, y_i), \]
for $\bb(\ee)$ defined in \eqref{bb-prop}.
\end{definition}

It follows from Lemma~\ref{CAlemma} that, if a point $p$ in a geodesic space
satisfies the $\ATB^*(\ee)$-condition  with respect to some quasi-bicombing
for constants $L \in \N$ and $R > 0$,
then it also satisfies the $\ATB(\ee)$-condition with the same constants. 
An advantage of the $\ATB^*(\ee)$-condition will be seen in Lemma~\ref{LiftLemma}(\ref{Adv}).

\section{Tools for proving $\ATB(\ee)$} \label{sc:Tools}

In this section we provide two lemmas those will be our main ingredients
for proving the $\ATB(\ee)$-condition for all the examples we discuss.
The first lemma will be used to study, for example, the lower curvature bounds.
The second lemma will be used in the upper curvature bounds. 

\begin{lemma}\label{LiftLemma}
Let $(X,d_X)$ and $(Y,d_Y)$ be metric spaces, $p \in X$, $q \in Y$, $R > 0$,
and $\Phi:B_R(q) \longrightarrow X$ be a map with $\Phi(q) = p$.
\begin{enumerate}[{\rm (1)}]
\item{\label{Simp}
\emph{The simple version}.
Suppose that $\Phi$ satisfies the following conditions:
\begin{enumerate}[{\rm (a)}]
\item{\label{SimpRad}$d_X(p, \Phi(y)) = d_Y(q, y)$ for every $y \in B_R(q)$,}
\item{\label{SimpAll}$d_X(\Phi(y),\Phi(y')) \ge d_Y(y,y')$ for every $y,y' \in B_R(q).$}
\end{enumerate}
If there are $L \in \N$ and $0<\ee<\pi/2$ such that
$p$ satisfies the $\ATB(\ee)$-condition for $L$ and $R$,
then $q$ also satisfies the $\ATB(\ee)$-condition for the same $L$ and $R$.  
}
\item{\label{Adv}
\emph{The $\ATB^*$-version}.
Suppose that there are quasi-bicombings $\GG_X$ and $\GG_Y$ on $X$ and $Y$
satisfying the following conditions:}
\begin{enumerate}[{\rm (a)}]
\item{\label{StarRad}$d_X(p, \Phi(y)) \le K_1 d_Y(q, y)$
for some $K_1 > 0$ and every $y \in B_R(q)$,}
\item{\label{StarAll}$d_X(\Phi(y),\Phi(y')) \ge K_2 d_Y(y,y')$
for some $K_2 > 0$ and every $y,y' \in B_R(q)$,}
\item{\label{StarGG}For every $\gg \in \GG_Y$ having $q$ as an endpoint,
we have $\Phi(\gg) \in \GG_X$.}
\end{enumerate} 
If there are $L \in \N$ and $0<\ee<\pi/2$ with $\beta(\ee)<K_2/4K_1$
such that $p$ satisfies the $\ATB^*(\ee')$-condition with respect to $\GG_X$ for $L$ and $K_1R$, 
then $q$ satisfies the $\ATB^*(\ee)$-condition with respect to $\GG_Y$ for $L$, $R$,
and $0<\ee<\pi/2$ satisfying $\bb(\ee') \le K_2 \bb(\ee)/K_1$.
\end{enumerate}   
\end{lemma}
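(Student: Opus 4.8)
The plan is to prove both statements by the same three-move strategy: transport the configuration at $q$ forward to $p$ via $\Phi$, apply the hypothesis there, and transport the resulting conclusion back to $q$ using the distance bounds on $\Phi$; the one real input is the elementary monotonicity of the Euclidean comparison angle visible in \eqref{eq:cosine}, namely that keeping $d(o,a)$ and $d(o,b)$ fixed while enlarging $d(a,b)$ decreases $\wt\angle aob$. For part~(\ref{Simp}), given $y_1,\dots,y_L \in B_R(q)\setminus\{q\}$ I would set $z_i := \Phi(y_i)$. Condition~(\ref{SimpRad}) gives $d_X(p,z_i) = d_Y(q,y_i) \in (0,R)$, so $z_i \in B_R(p)\setminus\{p\}$, and the $\ATB(\ee)$-condition at $p$ produces $i\neq j$ with $\wt\angle z_i p z_j < \ee$. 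Since the sides $d_X(p,z_i)$ and $d_X(p,z_j)$ equal $d_Y(q,y_i)$ and $d_Y(q,y_j)$ by~(\ref{SimpRad}), while $d_X(z_i,z_j)\ge d_Y(y_i,y_j)$ by~(\ref{SimpAll}), formula~\eqref{eq:cosine} gives $\cos\wt\angle y_i q y_j \ge \cos\wt\angle z_i p z_j$, hence $\wt\angle y_i q y_j \le \wt\angle z_i p z_j < \ee$, which is the $\ATB(\ee)$-condition at $q$ with the same $L$ and $R$.

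For part~(\ref{Adv}), given $y_1,\dots,y_L \in B_R(q)\setminus\{q\}$ and $\gg_i \in \GG_Y$ joining $q$ to $y_i$, I would set $z_i := \Phi(y_i)$ and $\eta_i := \Phi(\gg_i)$, noting that $\gg_i \subset B_R(q)$ by minimality so that $\eta_i$ is defined. From~(\ref{StarAll}), $d_X(z_i,p) \ge K_2 d_Y(y_i,q) > 0$, so $z_i \neq p$; from~(\ref{StarRad}), $d_X(p,z_i) \le K_1 d_Y(q,y_i) < K_1 R$, so $z_i \in B_{K_1 R}(p)\setminus\{p\}$; and by~(\ref{StarGG}), $\eta_i \in \GG_X$ is a minimizing geodesic joining $p$ to $z_i$. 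The $\ATB^*(\ee')$-condition at $p$ with respect to $\GG_X$ for $L$ and $K_1 R$ then yields $i \neq j$ with $\min_{x\in\eta_j} d_X(z_i,x) \le \bb(\ee')\,d_X(p,z_i)$. As $\eta_j$ is a compact minimizing geodesic this minimum is attained at some $x^* \in \eta_j$, which is of the form $x^* = \Phi(v^*)$ for a point $v^* \in \gg_j$; since $d_Y(q,v^*) \le d_Y(q,y_j) < R$, condition~(\ref{StarAll}) applies to the pair $(y_i,v^*)$, and chaining it with the previous inequality and~(\ref{StarRad}) gives
\[ \min_{v\in\gg_j} d_Y(y_i,v) \;\le\; d_Y(y_i,v^*) \;\le\; \frac{d_X(z_i,x^*)}{K_2} \;\le\; \frac{\bb(\ee')}{K_2}\,d_X(p,z_i) \;\le\; \frac{K_1\bb(\ee')}{K_2}\,d_Y(q,y_i). \]
By the relation $\bb(\ee') \le K_2\bb(\ee)/K_1$ the right-hand side is at most $\bb(\ee)\,d_Y(q,y_i)$, the $\ATB^*(\ee)$-condition at $q$ with respect to $\GG_Y$ for $L$ and $R$; here the hypothesis comparing $\bb$ with $K_2/4K_1$ is exactly what makes a choice of $\ee\in(0,\pi/2)$ compatible with the given $\ee'$ possible, since $\bb$ is continuous with values filling $(0,1/4)$.

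I do not anticipate a genuine obstacle here. The two points that need care are, in both parts, checking that the forward images $z_i$ land in the correct \emph{punctured} ball at $p$ — the lower bound~(\ref{SimpAll}), resp.~(\ref{StarAll}), being needed only to exclude $z_i = p$ — and, in part~(\ref{Adv}), ensuring that the nearest point of $\eta_j$ to $z_i$ pulls back to a point of $\gg_j$ that still lies in $B_R(q)$, so that~(\ref{StarAll}) is legitimately available there. The latter is precisely why the statement is phrased for a quasi-bicombing rather than for arbitrary geodesics, and is where hypothesis~(\ref{StarGG}) enters; everything else is just tracking the constants $K_1, K_2, \bb(\ee'), \bb(\ee)$ through the chain of inequalities.
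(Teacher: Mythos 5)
Your proposal is correct and follows essentially the same argument as the paper: push the points (and, in part (2), the bicombing geodesics) forward by $\Phi$, invoke the $\ATB$- resp.\ $\ATB^*$-hypothesis at $p$, and pull the resulting pair back to $q$ via the comparison-angle monotonicity in \eqref{eq:cosine} (part (1)) or the chain of inequalities through $K_1$, $K_2$, $\bb(\ee')$, $\bb(\ee)$ (part (2)). Your extra checks — that the images avoid $p$, land in the correct ball, and that the nearest point on $\Phi(\gg_j)$ pulls back to a point of $\gg_j$ inside $B_R(q)$ — are exactly the implicit steps in the paper's proof, so there is nothing to add.
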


\begin{proof}
(\ref{Simp})
Let $y_1,\dots,y_L \in B_R(q) \setminus \{q\}$.
Note that $\Phi(y_i) \neq p$ by (\ref{SimpRad}).
Then the $\ATB(\ee)$-condition for $p$ gives $i \neq j$ such that 
\[ \wt\angle \Phi(y_i) p \Phi(y_j) < \ee. \]
Combining this with the hypotheses (\ref{SimpRad}) and (\ref{SimpAll}) implies 
\[ \wt\angle y_i q y_j < \ee. \]
This completes the proof of the simple version.

(\ref{Adv})
Let $y_1,\dots,y_L \in B_R(q) \setminus \{q\}$ and take
$\gg_1,\dots, \gg_L \in \GG_Y$ such that $\gg_i$ connects $q$ and $y_i$.
By (\ref{StarGG}) we have $\Phi(\gg_1),\dots, \Phi(\gg_L) \in \GG_X$,
and (\ref{StarRad}) and (\ref{StarAll}) ensure $\Phi(y_i) \in B_{K_1 R}(p) \setminus \{p\}$. 
The $\ATB^*(\ee')$-condition for $p$ yields $i \neq j$ such that
\begin{equation}
\min_{x \in \Phi(\gg_j)} d_X\big( \Phi(y_i),x \big) \le \bb(\ee')d_X\big( p, \Phi(y_i) \big).
\label{ATBs-in}\end{equation}
Fix $x \in \Phi(\gg_j)$ for which this minimum is achieved,
and take $y \in \gg_j$ such that $\Phi(y) =x$.
By (\ref{StarAll}) and (\ref{ATBs-in}) we have 
\[ d_Y(y_i,y) \le \frac{1}{K_2}d_X\big( \Phi(y_i), x \big)
 \le \frac{\bb(\ee')}{K_2}d_X\big( p, \Phi(y_i) \big). \]
Moreover, we deduce from (\ref{StarRad}) that
\[ \frac{\bb(\ee')}{K_2}d_X\big( p, \Phi(y_i) \big) 
 \le \frac{\bb(\ee')K_1}{K_2}d_Y(q, y_i) \le \bb(\ee) d_Y(q, y_i). \]
Thus we have  
\[ \min_{y' \in \gg_j} d_Y(y_i,y') \le \bb(\ee)d_Y(q, y_i) \]
and this completes the proof.
\end{proof}

\begin{definition}[Locally extendable geodesics]\label{defLEG}
Let $(X,d)$ be a geodesic metric space.
We say that $p \in X$ satisfies the \emph{locally extendable geodesics} condition
with a parameter $\dd > 0$ if every unit speed minimizing geodesic
$\gg: [0,T] \longrightarrow \wb{B_\dd(p)}$
can be extended to a unit speed minimizing geodesic $\wt \gg: [0,T + \dd] \longrightarrow X$,
i.e., $\wt \gg(t) = \gg(t)$ for all $t \in [0,T]$.
We say that $X$ is a space with locally extendable geodesics
if every $p \in X$ satisfies the locally extendable geodesics condition with some $\dd(p) > 0$.
\end{definition}

\begin{definition}[Local rough Busemann condition; $\LRB$]\label{defLRB}
Let $(X,d)$ be a geodesic metric space.
We say that $p \in X$ satisfies the \emph{local rough Busemann} condition,
the $\LRB$-condition for short, with parameters $H, K > 0$
if, for every pair of unit speed minimizing geodesics
$\gg_1: [0,L_1] \longrightarrow  \wb{B_H(p)}$ and $\gg_2: [0,L_2] \longrightarrow  \wb{B_H(p)}$
with $\gg_1(0) = \gg_2(0)$, we have 
\[ d\big( \gg_1(tL_1),\gg_2(tL_2) \big) \le Ktd\big( \gg_1(L_1),\gg_2(L_2)\big) \]
for all $t \in [0,1]$.
\end{definition}

A subset of a metric space is said to be \emph{$r$-separated}
if every pair of distinct points in that set is of distance $\ge r$.
We will denote by $S_R(p)$ the sphere of center $x$ and radius $R$,
namely the set of points $y$ of $d(p,y)=R$.

\begin{lemma}\label{ATBs-from-LRB}
Let $X$ be a geodesic metric space, $R > 0$, and $p \in X$ be a point satisfying
the locally extendable geodesics condition with $\dd = R$ 
and the $\LRB$-condition with $H = R$ and some $K > 0$. 
Let $0<\ee<\pi/2$ and suppose that the cardinality of any $(\bb(\ee)R/K)$-separated set
in $S_R(p)$ is less than $L \in \N$.
Then $p$ satisfies the $\ATB^*(\ee)$-condition with respect to any quasi-bicombing $\GG$ on $X$
for $L$ and $R$.
\end{lemma}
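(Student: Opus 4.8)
The plan is to take geodesics $\gg_1,\dots,\gg_L \in \GG$ emanating from $p$, say with other endpoints $y_1,\dots,y_L \in B_R(p)\setminus\{p\}$, and produce the desired pair $i \neq j$ with $\min_{x \in \gg_j} d(y_i,x) \le \bb(\ee)d(p,y_i)$ by a pigeonhole argument on the unit sphere $S_R(p)$. The first step is to extend each $\gg_i$ so that it reaches distance exactly $R$ from $p$: since $d(p,y_i) < R$, the minimizing geodesic $\gg_i$ lies in $\wb{B_R(p)}$, so the locally extendable geodesics condition with $\dd = R$ lets me extend it to a minimizing geodesic $\wt\gg_i$ of length $\ge R$; let $z_i := \wt\gg_i(R) \in S_R(p)$ be the point at arclength $R$ from $p$ along this extension. (Here I use that $\wb{B_R(p)}$ is exactly where the hypothesis applies; the extension is minimizing, so $d(p,z_i)=R$.)

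Next I invoke the separation hypothesis: since any $(\bb(\ee)R/K)$-separated subset of $S_R(p)$ has fewer than $L$ points, the $L$ points $z_1,\dots,z_L$ cannot be $(\bb(\ee)R/K)$-separated, so there exist $i \neq j$ with $d(z_i,z_j) < \bb(\ee)R/K$. (If some $z_i = z_j$ coincide this is automatic.) Now I want to transport this closeness from the endpoints on the sphere back to the point $y_i$ on $\gg_j$. Apply the $\LRB$-condition with $H = R$ to the two unit-speed minimizing geodesics $\wt\gg_i|_{[0,R]}$ and $\wt\gg_j|_{[0,R]}$, both issuing from $p$ and both contained in $\wb{B_R(p)}$: for $t = d(p,y_i)/R \in (0,1)$ we get
\[
d\big(\wt\gg_i(d(p,y_i)),\,\wt\gg_j(d(p,y_i))\big)
 \le K\cdot \frac{d(p,y_i)}{R}\cdot d(z_i,z_j)
 < K\cdot\frac{d(p,y_i)}{R}\cdot\frac{\bb(\ee)R}{K}
 = \bb(\ee)\,d(p,y_i).
\]
Since $\wt\gg_i(d(p,y_i)) = y_i$ (the extension agrees with $\gg_i$ on $[0,d(p,y_i)]$) and $x := \wt\gg_j(d(p,y_i))$ is a point of $\wt\gg_j$ — but more importantly it lies on the original segment $\gg_j$, because $d(p,y_i) \le d(p,y_j)$ may fail, so I should instead observe that $x$ is a point of $\wt\gg_j$, and then re-run the argument with the roles of $i,j$ chosen so that $d(p,y_i)$ is the smaller of the two, guaranteeing $x \in \gg_j$. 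With that ordering, $x \in \gg_j$ and $d(y_i,x) < \bb(\ee)d(p,y_i)$, which is exactly the $\ATB^*(\ee)$-conclusion.

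The main obstacle is the bookkeeping in the last step: ensuring the comparison point $x$ produced by $\LRB$ genuinely lies on the \emph{original} geodesic $\gg_j$ (not merely its extension), which forces the choice of which index plays the role of $i$ versus $j$; since the $\LRB$ inequality is symmetric in the two geodesics but the conclusion we want is not, I must select the pair so that $\min\{d(p,y_i),d(p,y_j)\}$ is attained at the index I call $i$. Everything else — the extension to the sphere, the pigeonhole on separation, and the single application of the $\LRB$ contraction estimate — is routine. Note also that the conclusion holds for \emph{any} quasi-bicombing $\GG$, since the argument only used that each $\gg_i$ is a minimizing geodesic from $p$ to $y_i$ inside $\wb{B_R(p)}$.
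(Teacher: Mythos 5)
Your proof is correct and follows essentially the same route as the paper's: extend each geodesic past its endpoint to hit $S_R(p)$, use the bound on $(\bb(\ee)R/K)$-separated sets to find two sphere points closer than $\bb(\ee)R/K$, and transport that closeness back along the geodesics via the $\LRB$ inequality at parameter $t=d(p,y_i)/R$, with the index of smaller $d(p,\cdot)$ playing the role of $i$ so the comparison point lies on $\gg_j$ itself. The only difference is organizational (you argue directly by pigeonhole, the paper argues by contradiction, deriving a separated $L$-point set on the sphere), which is immaterial.
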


We remark that in Lemma~\ref{ATBs-from-LRB} everything happens in $\wb{B_R(p)}$,
where the $\LRB$-condition ensures the uniqueness of a minimizing geodesic
connecting any pair of points. 
Thus nothing really depends on the quasi-bicombing $\GG$.

\begin{proof}
Let  $y_1, \dots, y_{L} \in B_R(p) \setminus \{p\}$ 
and $\gg_1, \dots, \gg_L$ be the unique minimizing geodesics connecting $p$ and these points. 
We need to find $i \neq j$ such that 
\begin{equation}
\min_{x \in \gg_j} d(y_i,x) \le \bb(\ee)d(p, y_i).
\label{ATBs-eq}\end{equation}
By contradiction suppose that \eqref{ATBs-eq} does not hold for any $i \neq j$.
For $1 \le k \le L$, let $\wb y_k$ be the intersection of an extension of $\gg_k$ beyond $y_k$
with the sphere $S_R(p)$, provided by the locally extendable geodesics condition.
Then we shall see that 
\begin{equation}
d(\wb y_i, \wb y_j) > \frac{\bb(\ee)R}{K}
\label{Far-on-Sp}\end{equation}
holds for all $i \neq j$.
Let $d(p, y_i) \le d(p, y_j)$ without loss of generality
and take $x \in \gg_j$ such that $d(p, y_i) = d(p, x)$.
We deduce from the negation of \eqref{ATBs-eq} that $d(y_i, x) > \bb(\ee)d(p,y_i)$.
Then the $\LRB$-condition implies \eqref{Far-on-Sp} as
\[ d(\wb y_i, \wb y_j) \ge \frac{R}{Kd(p,y_i)}d(y_i,x) > \frac{\bb(\ee)R}{K}. \]
This contradicts the hypothesis on the cardinality of $(\bb(\ee)R/K)$-separated sets,
and completes the proof.
\end{proof}




\section{Examples of $\ATB(\ee)$-spaces}\label{Examples}

We present examples of metric spaces satisfying 
the $\ATB(\ee)$-condition locally, semi-globally or globally 
(recall Definition~\ref{ATB} for the definition).
The $\ATB(\ee)$-condition turns out flexible and covers both 
Riemannian and Finsler spaces,
as well as both lower and upper curvature bounds.

\subsection{Metric spaces with upper curvature bound}

We start with metric spaces with upper curvature bounds.
We refer to \cite{BBI} for the basics of \emph{$\CAT(k)$-spaces}
(geodesic metric spaces of sectional curvature $\le k$).
Note that $\CAT(0)$-spaces are \emph{nonpositively curved in the sense of Busemann}
(\emph{Busemann NPC} for short).

\begin{proposition}[$\ATB$ of $\CAT(k)$-spaces]\label{CATExt-are-ATB}
\begin{enumerate}[{\rm (1)}]
\item\label{CATk}
For any $k \in \R$ and $0<\ee<\pi/2$, complete,
locally compact $\CAT(k)$-spaces with locally extendable geodesics are locally $\ATB(\ee)$.

\item\label{B-NPC}
For any $0<\ee<\pi/2$, complete,
locally compact Busemann NPC spaces with locally extendable geodesics are semi-globally $\ATB(\ee)$.
\end{enumerate}
\end{proposition}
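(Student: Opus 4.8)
The plan is to reduce everything to Lemma~\ref{ATBs-from-LRB}, so the bulk of the work is verifying that complete, locally compact $\CAT(k)$-spaces (resp.\ Busemann NPC spaces) with locally extendable geodesics satisfy, near each point, the $\LRB$-condition and have a uniform upper bound on the cardinality of separated sets in small spheres. For part~(\ref{B-NPC}), the Busemann NPC convexity of the distance function gives the $\LRB$-condition \emph{directly} with $K=1$: for two unit-speed geodesics $\gg_1,\gg_2$ emanating from the same point, the function $t \mapsto d(\gg_1(tL_1),\gg_2(tL_2))$ is convex and vanishes at $t=0$, hence is bounded by $t\,d(\gg_1(L_1),\gg_2(L_2))$. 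So there the constants $H$ and $\dd$ come only from the locally extendable geodesics hypothesis, and $H=\dd$ can be taken to be a common value $R$ on a neighborhood by local compactness.

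For part~(\ref{CATk}), the distance function is not globally convex, but on a small ball it is nearly so: I would fix a point $x$, use local compactness to get a radius $\rho$ on which the space is a genuine $\CAT(k)$-space with unique geodesics and on which geodesics extend by a definite amount, and then compare with the model space $M_k^2$. On $\wb{B_R(p)}$ with $R$ small relative to the diameter bound for $k>0$, the model-space comparison gives $d(\gg_1(tL_1),\gg_2(tL_2)) \le K\,t\,d(\gg_1(L_1),\gg_2(L_2))$ with a constant $K=K(k,R)$ that tends to $1$ as $R \to 0$ — this is the standard fact that on small scales a $\CAT(k)$-space is biLipschitz-comparable to a $\CAT(0)$-space, or one can simply estimate the $M_k^2$ law of cosines directly. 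So one chooses $R$ small enough that such a $K$ exists; the $\LRB$-condition then holds with $H=R$ and this $K$, and locally extendable geodesics with $\dd=R$ hold after shrinking $R$ further.

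The remaining ingredient in both cases is the cardinality bound: the cardinality of any $(\bb(\ee)R/K)$-separated subset of $S_R(p)$ must be less than some $L$ independent of $p$ (on the relevant neighborhood). Here local compactness is exactly what is needed. A $\bb(\ee)R/K$-separated set in $S_R(p) \subset \wb{B_R(p)}$ has cardinality at most the packing number of $\wb{B_R(p)}$ at scale $\bb(\ee)R/K$; by compactness of $\wb{B_{R'}(x)}$ for a slightly larger $R'$, and the fact that all the balls $B_R(p)$ with $p \in B_{R_0}(x)$ sit inside this fixed compact set, this packing number is bounded uniformly over such $p$ by a finite $L = L(x)$. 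Then Lemma~\ref{ATBs-from-LRB} gives the $\ATB^*(\ee)$-condition with respect to any quasi-bicombing for $L$ and $R$ at every $p$ in the neighborhood, and the concluding remark after Lemma~\ref{CAlemma} upgrades this to $\ATB(\ee)$ with the same $L$ and $R$. In the $\CAT(k)$ case this is a statement about a neighborhood of each $x$, i.e.\ locally $\ATB(\ee)$; in the Busemann NPC case the constant $K=1$ does not degrade with scale, so only local compactness limits us, and one gets the $\ATB(\ee)$-condition at every $p \in B_R(x)$ for each $x$ and \emph{each} $R>0$ (with $L$ depending on $x$ and $R$), which is precisely semi-global $\ATB(\ee)$.

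The main obstacle I expect is the $\CAT(k)$ case for $k>0$: pinning down the constant $K$ in the $\LRB$-condition from model-space comparison, and making sure the radius $R$ can be chosen uniformly on a neighborhood of $x$ so that simultaneously (i) the $\CAT(k)$ inequality and uniqueness of geodesics hold on $\wb{B_R(p)}$, (ii) geodesics in $\wb{B_R(p)}$ extend by $\dd=R$, and (iii) the $M_k^2$-comparison constant $K=K(k,R)$ is finite. Each of these is standard, but coordinating the choices — and handling the fact that the locally extendable geodesics parameter $\dd(p)$ a priori varies with $p$ — requires a careful application of local compactness to extract a single working $R$ on a neighborhood of $x$.
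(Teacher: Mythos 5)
Your overall route is the same as the paper's: establish the $\LRB$-condition and a uniform bound on the cardinality of separated sets in small spheres, feed these into Lemma~\ref{ATBs-from-LRB} to get the $\ATB^*(\ee)$-condition, and upgrade to $\ATB(\ee)$ via Lemma~\ref{CAlemma}. The only substantive difference in part~(\ref{CATk}) is that the paper obtains the local $\LRB$-condition (a radius $R \le \dd/2$ and a constant $K$ valid for every $p \in B_R(x)$) by citing \cite{OhtaConv}, whereas you derive it by direct comparison with $M_k^2$; that is legitimate, and the coordination you worry about at the end is handled more simply than you fear: taking $R \le \dd(x)/2$ suffices, since any minimizing geodesic inside $\wb{B_R(p)}$ with $p \in B_R(x)$ lies in $\wb{B_{\dd(x)}(x)}$ and hence extends by $\dd(x) \ge R$, with no need to compare the parameters $\dd(p)$ at different points. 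Your separated-set bound (properness of a complete, locally compact geodesic space, hence compactness of $\wb{B_{2R}(x)}$, which contains $S_R(p)$ for all $p\in B_R(x)$) matches the paper's.

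The gap is in part~(\ref{B-NPC}). As written, your first paragraph takes $H=\dd=R$ to be ``a common value on a neighborhood by local compactness,'' which is exactly the local statement and would only yield local $\ATB(\ee)$; but semi-global $\ATB(\ee)$ requires running Lemma~\ref{ATBs-from-LRB} at an \emph{arbitrary} prescribed radius $R$, and for that the extendability input of the lemma must hold at scale $R$: every minimizing geodesic inside $\wb{B_R(p)}$ must extend as a minimizing geodesic by $R$. This does not follow directly from the hypothesis, which only provides each point with a possibly tiny $\dd(p)$. You need the standard globalization for complete Busemann NPC spaces: local extendability together with completeness lets one continue geodesics indefinitely as local geodesics, and a Cartan--Hadamard-type argument shows that local geodesics in Busemann NPC spaces are globally minimizing; combined with the fact that Busemann convexity gives $K=1$ at every scale, this justifies taking $R$ arbitrarily large. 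The paper compresses this into ``$R$ can be taken arbitrarily large,'' but your write-up asserts the semi-global conclusion while the mechanism you describe only justifies the local one, so this globalization step should be added explicitly.
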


\begin{proof}
(\ref{CATk})
Let $(X,d)$ be a locally compact $\CAT(k)$-space satisfying the locally extendable geodesics
condition with $\dd>0$.
Fix $x \in X$.
By \cite[Section 3]{OhtaConv}, there exist $0 < R \le \dd/2$ and $K > 0$
such that every $p \in B_R(x)$ satisfies the $\LRB$-condition for $H = R$ and $K$.
Since $X$ is a complete, locally compact geodesic space,
it is proper and hence $\wb{B_{2R}(x)}$ is compact
(see \cite[Proposition 2.5.22]{BBI}).
Thus there exists $L \in \N$ such that the cardinality of any $(\bb(\ee)R/K)$-separated set
in $\wb{B_{2R}(x)}$ is less than $L$. 
Then Lemma~\ref{ATBs-from-LRB} shows that $p$ satisfies the $\ATB^*(\ee)$-condition 
with respect to any quasi-bicombing $\GG$ on $X$ for $L$ and $R$.
We finally apply Lemma~\ref{CAlemma} to see that 
$p$ satisfies the $\ATB(\ee)$-condition with the same parameters $L$ and $R$.

(\ref{B-NPC})
In this case $R$ can be taken arbitrarily large and we have the semi-global $\ATB(\ee)$
by the same discussion.
\end{proof}

We present two results as a corollary for later use.
For $k < 0$ and $n \in \N$, we denote by $\mathbb{H}^n(k)$ the $n$-dimensional hyperbolic space.
In a normed space, we call the \emph{linear quasi-bicombing}
the one which consists of all line segments.

\begin{corollary}\label{Hnk-ATB}
\begin{enumerate}[{\rm (1)}]
\item\label{Hnk-ATB-Hn}
For $k < 0$, $n \in \N$, $R > 0$ and $0 < \ee < \pi/2$,
there exists $L=L(k,n,R,\ee) \in \N$ such that every point $p \in \mathbb{H}^n(k)$ satisfies
the $\ATB(\ee)$-condition for $L$ and $R$. 

\item\label{Hnk-ATB-Rn}
For $n \in \N$ and $0 < \ee < \pi/2$,
there exists $L=L(n,\ee) \in \N$ such that every point $p$ in the Euclidean space $\R^n$ satisfies
the $\ATB^*(\ee)$-condition with respect to the linear quasi-bicombing for $L$ and $R = \infty$.   
\end{enumerate}
\end{corollary}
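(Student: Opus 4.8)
The plan is to deduce both parts from the machinery already assembled, namely Lemma~\ref{ATBs-from-LRB} together with Lemma~\ref{CAlemma}, by verifying the geometric hypotheses ($\LRB$, locally extendable geodesics, and a bound on the size of separated sets on spheres) for the model spaces $\H^n(k)$ and $\R^n$. For part~(\ref{Hnk-ATB-Hn}): fix $k<0$, $n$, $R>0$, $\ee$. Hyperbolic space is a complete, locally compact, geodesically complete $\CAT(k)$-space, so Proposition~\ref{CATExt-are-ATB}(\ref{CATk}) already tells us each point is \emph{locally} $\ATB(\ee)$; the content of the corollary is that the constants can be taken \emph{uniform in $p$ and for the given $R$}, which follows from the homogeneity of $\H^n(k)$ (its isometry group acts transitively). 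Concretely, I would check that $\H^n(k)$ satisfies the $\LRB$-condition with $H=R$ and a constant $K=K(k,R)$ depending only on $k$ and $R$ but not on the basepoint — this is the statement that in $\H^n(k)$, geodesic triangles with vertex at a common point and the opposite vertices within distance $R$ satisfy $d(\gg_1(tL_1),\gg_2(tL_2)) \le Kt\,d(\gg_1(L_1),\gg_2(L_2))$, which one gets from the explicit hyperbolic law of cosines (or from the fact that in curvature $\ge k'$ spaces the function $t\mapsto d(\gg_1(tL_1),\gg_2(tL_2))/t$ behaves controllably; here $\H^n(k)$ has curvature exactly $k$, so both bounds apply). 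Geodesics are globally extendable, so the locally extendable geodesics condition holds with any $\dd$. Finally, $S_R(p)$ in $\H^n(k)$ is a round sphere of fixed intrinsic geometry independent of $p$, so the maximal cardinality $L$ of a $(\bb(\ee)R/K)$-separated subset of $S_R(p)$ is a finite number depending only on $k,n,R,\ee$. Lemma~\ref{ATBs-from-LRB} then gives $\ATB^*(\ee)$ for $L$ and $R$ with respect to any quasi-bicombing, and Lemma~\ref{CAlemma} (as recorded in the remark after Definition~\ref{ATB*}) upgrades this to $\ATB(\ee)$ for the same $L$ and $R$.

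For part~(\ref{Hnk-ATB-Rn}), the Euclidean case is cleaner because we can take $R=\infty$ and we want the $\ATB^*$ statement directly (with respect to the linear quasi-bicombing), so no appeal to Lemma~\ref{CAlemma} is even needed. Here I would not use Lemma~\ref{ATBs-from-LRB} verbatim (its $H=R$ is finite), but argue directly from the definition of $\ATB^*(\ee)$: given $y_1,\dots,y_L \in \R^n\setminus\{0\}$ (taking $p=0$ by translation) and the segments $\gg_i=[0,y_i]$, we must find $i\neq j$ with $\min_{x\in\gg_j}|y_i - x| \le \bb(\ee)|y_i|$. The key elementary fact is that the distance from $y_i$ to the ray (equivalently, to the segment, after a harmless step — see below) through $y_j$ is at most $|y_i|\sin\theta_{ij}$ where $\theta_{ij} = \angle(y_i,y_j)$ is the Euclidean angle at the origin; so it suffices to make $\sin\theta_{ij}\le\bb(\ee)$, i.e.\ $\theta_{ij}$ small. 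But the directions $y_i/|y_i|$ all lie on the unit sphere $S^{n-1}$, which is compact, hence admits only finitely many $\rho$-separated points for any fixed $\rho>0$; choosing $L=L(n,\ee)$ to exceed the maximal cardinality of an $\arcsin(\bb(\ee))$-separated subset of $S^{n-1}$ (in the chordal, or equivalently angular, metric), the pigeonhole principle forces two directions within angle $\arcsin(\bb(\ee))$ of each other, which is exactly what we need.

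The one point requiring a little care — and the place I'd expect the only real friction — is the reduction from "distance to the line/ray spanned by $y_j$" to "distance to the segment $\gg_j=[0,y_j]$", since $\ATB^*$ demands the minimum over the segment, not the full ray. If $|y_i|\le|y_j|$ then the nearest point on the line to $y_i$ has parameter $\le |y_i|\cos\theta_{ij}\le|y_i|\le|y_j|$ along the segment (and $\ge 0$ when $\theta_{ij}<\pi/2$, which is guaranteed once $\bb(\ee)<1$), so it lies on $\gg_j$ and the line-distance bound $|y_i|\sin\theta_{ij}$ applies unchanged; hence one should apply the pigeonhole among directions but then pick the pair $i\neq j$ with the roles ordered so that $|y_i|\le|y_j|$. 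This is the same $|y_i|\le|y_j|$ bookkeeping already used in the proof of Lemma~\ref{ATBs-from-LRB}, so it is routine. With this observation in place, both parts follow, and for part~(\ref{Hnk-ATB-Hn}) the analogous subtlety is already absorbed into Lemma~\ref{ATBs-from-LRB} via the locally extendable geodesics condition (which provides the extension $\wb y_k$ to the sphere), so nothing further is needed there.
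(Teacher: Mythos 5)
Your proposal is correct, and it runs on the same machinery as the paper, just with more of it unpacked by hand. For part (\ref{Hnk-ATB-Hn}) the paper simply observes that $\H^n(k)$ is a complete, locally compact Busemann NPC space with extendable geodesics, so Proposition~\ref{CATExt-are-ATB}(\ref{B-NPC}) gives the semi-global $\ATB(\ee)$-condition, and homogeneity makes the constant uniform in $p$; your direct verification of the $\LRB$-condition plus the separated-set count on $S_R(p)$ and Lemmas~\ref{ATBs-from-LRB} and \ref{CAlemma} is exactly what that proposition does internally (note you could even take $K=1$, since $\H^n(k)$ is $\CAT(0)$ and hence Busemann convex, rather than a constant $K(k,R)$ from the law of cosines). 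For part (\ref{Hnk-ATB-Rn}) the paper does invoke Lemma~\ref{ATBs-from-LRB}, with $K=1$ and the observation that the cardinality of $(\bb(\ee)R)$-separated sets in $\wb{B_R(p)}$ is bounded independently of $R$ by scale invariance, which yields a single $L=L(n,\ee)$ valid for every finite $R$ and hence for $R=\infty$; your direct argument (distance from $y_i$ to the segment $[0,y_j]$ is at most $|y_i|\sin\theta_{ij}$ once $|y_i|\le|y_j|$, then pigeonhole on $\arcsin(\bb(\ee))$-separated directions in $S^{n-1}$) is the same geometric content after rescaling, just phrased without the lemma, and your handling of the segment-versus-line issue and of the ordering $|y_i|\le|y_j|$ is the correct bookkeeping. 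So the only real difference is economy of citation: the paper's proof is two lines of references, yours re-derives them, and both are sound.
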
 

\begin{proof}
The first assertion is immediate from Proposition~\ref{CATExt-are-ATB}(\ref{B-NPC}).
The second assertion follows from Lemma~\ref{ATBs-from-LRB}
by noticing $K=1$ and the fact that
the cardinality of $(\bb(\ee)R)$-separated sets in $\wb{B_R(p)}$
is bounded above independent of $R$.
\end{proof}

\subsection{Metric spaces with lower curvature bound}

Next we consider finite-dimensional \emph{Alexandrov spaces of curvature bounded below}.
We again refer to \cite{BBI} for the basics of those spaces.



\begin{proposition}[$\ATB$ of Alexandrov spaces]\label{CBBs-are-ATB}
For any $k \in \R$ and $n \in \N$,
every $n$-dimensional Alexandrov space of curvature $\ge k$
satisfies the semi-global $\ATB(\ee)$-condition for any $0<\ee<\pi/2$.
Moreover, if $k \ge 0$, then it satisfies the global $\ATB(\ee)$-condition
for any $0<\ee<\pi/2$.
\end{proposition}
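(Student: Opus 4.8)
\emph{Setup.}
The plan is to apply the lifting lemma, Lemma~\ref{LiftLemma}, with the given Alexandrov space $X$ in the role of $Y$ and the Euclidean cone over the space of directions in the role of the target. Fix $q\in X$, write $\Sigma_q$ for its space of directions, and recall the standard facts: every geodesic issuing from $q$ has a well-defined direction in $\Sigma_q$; for geodesics $[qy],[qy']$ with directions $\xi_y,\xi_{y'}$ one has $\angle yqy'=d_{\Sigma_q}(\xi_y,\xi_{y'})$; and $\Sigma_q$ is a compact Alexandrov space of curvature $\ge 1$ and dimension $\le n-1$, hence of diameter $\le\pi$, so by the standard volume comparison for such spaces there is $L_0(n,\dd)\in\N$, independent of $q$, bounding the cardinality of every $\dd$-separated subset of $\Sigma_q$. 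Let $C_0(\Sigma_q)$ be the Euclidean cone with apex $o$, and let $\Phi_q\colon y\mapsto(d(q,y),\xi_y)$ (one geodesic $[qy]$ chosen per $y$); then $d_{C_0}(o,\Phi_q(y))=d(q,y)$, and $\Phi_q$ sends each geodesic issuing from $q$ to a radial segment, since its direction is constant along the geodesic. Note also that at the apex the comparison angle equals the link distance: $\wt\angle uou'=\min\{d_{\Sigma_q}(\xi,\xi'),\pi\}$.

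\emph{The case $k\ge0$.}
As curvature $\ge k$ implies curvature $\ge0$, Toponogov's comparison for curvature $\ge0$ gives $\wt\angle yqy'\le\angle yqy'$, which rearranges to $d(y,y')^2\le d(q,y)^2+d(q,y')^2-2d(q,y)d(q,y')\cos\angle yqy'=d_{C_0}(\Phi_q(y),\Phi_q(y'))^2$ for all $y,y'$; so $\Phi_q$ satisfies hypotheses (a),(b) of Lemma~\ref{LiftLemma}(1) with $R=\infty$. By the formula for $\wt\angle uou'$ above, $o$ satisfies the $\ATB(\ee)$-condition with $R=\infty$ and $L=L_0(n,\ee)$, so Lemma~\ref{LiftLemma}(1) gives the $\ATB(\ee)$-condition at $q$ with the same $L_0(n,\ee)$ and $R=\infty$. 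Since this is uniform in $q$, $X$ is globally $\ATB(\ee)$.

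\emph{The case $k<0$.}
Here $\Phi_q$ is only quasi-non-contracting and I use Lemma~\ref{LiftLemma}(2). Fix $x\in X$, $R>0$, $q\in B_R(x)$. For $y,y'\in B_R(q)$, Toponogov's comparison for curvature $\ge k$ gives $d(y,y')\le d_{M_k^2}(y,y')$, the third side of a hinge in the model plane $M_k^2$ of curvature $k$ with legs $d(q,y),d(q,y')$ and opening $\angle yqy'$; and an elementary comparison of the model plane with the Euclidean plane on bounded scales provides $C=C(R,k)\ge1$ with $d_{M_k^2}(y,y')\le C\,d_{C_0}(\Phi_q(y),\Phi_q(y'))$ whenever $d(q,y),d(q,y')\le R$. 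Taking $\GG_X$ to be the radial segments of $C_0(\Sigma_q)$, completed to a quasi-bicombing, $\Phi_q$ satisfies (a)--(c) of Lemma~\ref{LiftLemma}(2) with $K_1=1$ and $K_2=1/C$. The $\ATB^*(\ee')$-condition at $o$ with respect to $\GG_X$ holds with $L=L_0(n,\arcsin\bb(\ee'))$: given $u_1,\dots,u_L$ off the apex, some pair has $d_{\Sigma_q}(\xi_i,\xi_j)<\arcsin\bb(\ee')<\pi/2$; labelling it so that $d(o,u_i)\le d(o,u_j)$, the foot of the perpendicular from $u_i$ to the radial segment $[ou_j]$ lies on that segment (at radius $d(o,u_i)\cos d_{\Sigma_q}(\xi_i,\xi_j)\le d(o,u_i)\le d(o,u_j)$), so $\min_{z\in[ou_j]}d(u_i,z)=d(o,u_i)\sin d_{\Sigma_q}(\xi_i,\xi_j)<\bb(\ee')\,d(o,u_i)$. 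Now choose $\ee'$ with $\bb(\ee')\le\bb(\ee)/C$ and take $\ee$ small enough that $\bb(\ee)<1/(4C)$; Lemma~\ref{LiftLemma}(2) yields $\ATB^*(\ee)$ at $q$ with constants $L_0(n,\arcsin\bb(\ee'))$ and $R$, and Lemma~\ref{CAlemma} (as noted after Definition~\ref{ATB*}) promotes this to the $\ATB(\ee)$-condition at $q$ with the same constants. These are independent of $q\in B_R(x)$, which proves the semi-global $\ATB(\ee)$-condition for $\ee$ small; and since semi-global $\ATB(\ee_0)$ is inherited by every $\ee\ge\ee_0$ with the same $L$ (a comparison angle $<\ee_0$ is a fortiori $<\ee$), running the argument at a small enough auxiliary angle gives the statement for all $0<\ee<\pi/2$.

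\emph{Expected difficulty.}
The conceptual step is recognizing that for a lower curvature bound the right target is the Euclidean cone over $\Sigma_q$ with $\log_q$ as the lift, Toponogov's comparison being precisely what makes this map (quasi-)non-contracting. After that the genuine work is the uniform packing bound for $\Sigma_q$ (a routine volume comparison) and, for $k<0$, the bounded-scale comparison between $M_k^2$ and the Euclidean cone metric together with the $\ATB^*$-check at the apex --- where the only real care needed is to assign the roles in a close pair according to distance to $o$, so that the perpendicular foot lands on the radial segment. I expect this last bookkeeping, rather than any deep geometry, to be the fussiest part.
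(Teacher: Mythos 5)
Your argument is correct, but it takes a genuinely different route from the paper. The paper does not work at an arbitrary point directly: it picks a nearby point $\bar{x}$ with \emph{Euclidean} tangent cone (such points are dense), endows $T_{\bar{x}}X$ with the constant-curvature-$k$ model metric (so it is $\mathbb{H}^n(k)$, resp.\ $\R^n$), and lifts $B_R(\bar{x})$ by the log-type map; monotonicity of comparison angles makes this map exactly radial-isometric and non-contracting, so Lemma~\ref{LiftLemma}(1) applies even for $k<0$, and the needed $\ATB(\ee)$/$\ATB^*(\ee)$ of the model space is quoted from Corollary~\ref{Hnk-ATB} (which rests on the $\LRB$/extendable-geodesics machinery). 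Failure of $\ATB$ at the original point is then ruled out by an openness-plus-density argument. You instead work at every point $q$ with the Euclidean cone over $\Sigma_q$ as target: for $k\ge 0$ Toponogov gives the exact hypotheses of Lemma~\ref{LiftLemma}(1), and the base case is settled by a uniform packing bound on $\Sigma_q$ together with the apex identity $\wt\angle u o u'=\min\{d_{\Sigma_q},\pi\}$, which is arguably cleaner than routing through $\R^n$ and avoids the regular-point/openness step altogether; for $k<0$ you pay for the loss of exactness by invoking Lemma~\ref{LiftLemma}(2) with the bi-Lipschitz constant $C(R,k)$ and an explicit $\ATB^*$ computation at the apex. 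The only point needing a touch more care in your $k<0$ case is hypothesis (c) of Lemma~\ref{LiftLemma}(2): since $\Phi_q$ is built from a pointwise choice of geodesics $[qy]$, a chosen geodesic of the quasi-bicombing emanating from $q$ is mapped onto a single radial segment only if these choices are made compatibly (e.g.\ choosing the geodesic to an interior point as the corresponding subsegment whenever possible), or else one bypasses the lemma and applies Lemma~\ref{CAlemma} directly to the point of $[qy_j]$ at distance $d(q,y_i)$ from $q$, which your apex estimate already controls; this is routine bookkeeping of the same kind the paper performs in its other applications of Lemma~\ref{LiftLemma}(2), not a gap in the geometry.
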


\begin{proof}
In the first assertion, it suffices to consider the case of $k<0$.
Let $(X,d)$ be an $n$-dimensional Alexandrov space of curvature $\ge k$,
and compare $X$ with the hyperbolic space $\mathbb{H}^n(k)$.
Fix $R>0$ and $0<\ee<\pi/2$.
Recall from Corollary~\ref{Hnk-ATB}(\ref{Hnk-ATB-Hn}) that
there exists $L \in \N$ such that every point in $\mathbb{H}^n(k)$
satisfies the $\ATB(\ee)$-condition for $L$ and $R$. 

We claim that every point $p \in X$ satisfies the $\ATB(\ee)$-condition for $L$ and $R$. 
If not, then (by the openness of the condition)
any point $x \in B_{\ww}(p)$ with sufficiently small $\ww > 0$
does not satisfy the $\ATB(\ee)$-condition for $L$ and $R$.
Note that almost every point in $B_{\ww}(p)$ has the Euclidean tangent cone,
and let us fix $\bar{x}$ as one of such points.
Now we define the map $\Phi:B_R(\bar{x}) \longrightarrow T_{\bar{x}}X$ as follows,
where $T_{\bar{x}}X$ is equipped with the hyperbolic metric
(thus isometric to $\mathbb{H}^n(k)$).
For $x \in B_R(\bar{x})$, let $\gg_x$ be a minimizing geodesic from $\bar{x}$ to $x$,
and define $\Phi(x):=v_x \in T_{\bar{x}}X$ as the velocity vector of $\gg_x$ with length $d(\bar{x},x)$.
By the monotonicity of the comparison angle in Alexandrov spaces,
$\Phi$ satisfies the conditions of Lemma~\ref{LiftLemma}(\ref{Simp}). 
Therefore $\bar{x}$ satisfies the $\ATB(\ee)$-condition for $L$ and $R$.
This is, however, a contradiction and completes the proof for $k<0$.

In the case of $k=0$, the claim follows from the same discussion and
Corollary~\ref{Hnk-ATB}(\ref{Hnk-ATB-Rn}).
\end{proof}



\subsection{Finite-dimensional normed spaces}

Recall that, in a normed space,
the linear quasi-bicombing means the one which consists of all line segments.

\begin{proposition}[ATB of normed spaces]\label{pr:norm}
For $n \in \N$ and $0<\ee<\pi/2$, there exists $L_N=L_N(n,\ee)$ such that
every point in any $n$-dimensional normed space satisfies the $\ATB^*(\ee)$-condition
with respect to the linear quasi-bicombing for $L_N$ and $R = \infty$.
In particular, every $n$-dimensional normed space satisfies
the global $\ATB(\ee)$-condition for $L_N$.
\end{proposition}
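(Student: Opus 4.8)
The plan is to reduce to the Euclidean case, already handled in Corollary~\ref{Hnk-ATB}(\ref{Hnk-ATB-Rn}), by comparing an arbitrary norm with a Euclidean one \emph{uniformly in the norm}. The uniform comparison is supplied by Fritz John's ellipsoid theorem: given an $n$-dimensional normed space $(V,\|\cdot\|)$, let $|\cdot|$ be the Euclidean norm on $V$ whose unit ball is the maximal-volume ellipsoid inscribed in the unit ball of $\|\cdot\|$. Then
\[ \|v\| \le |v| \le \sqrt{n}\,\|v\| \qquad \text{for all } v \in V . \]
Identifying $(V,|\cdot|)$ isometrically with $\R^n$, this says that the identity map $\Phi\colon V \to \R^n$ is $\sqrt n$-Lipschitz while its inverse is $1$-Lipschitz, and that it carries line segments to line segments.

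Next I would apply the $\ATB^*$-version of the lift lemma, Lemma~\ref{LiftLemma}(\ref{Adv}), with $Y=(V,\|\cdot\|)$, $X=\R^n$ (Euclidean), $q=0\in V$, $p=0\in\R^n$, $\Phi=\id$, $R=\infty$, and $\GG_X$, $\GG_Y$ the linear quasi-bicombings (line segments are minimizing geodesics in any normed space by convexity of the norm). Condition (\ref{StarRad}) holds with $K_1=\sqrt n$, condition (\ref{StarAll}) with $K_2=1$, and condition (\ref{StarGG}) is immediate. Given $0<\ee<\pi/2$, choose $0<\ee'<\pi/2$ small enough that $\bb(\ee')\le \bb(\ee)/\sqrt n = K_2\bb(\ee)/K_1$; this is possible since $\bb$ is continuous with $\bb(\ee')\to 0$ as $\ee'\to 0^+$, and it forces $\bb(\ee')< 1/(4\sqrt n)=K_2/(4K_1)$ because $\bb(\ee)<1/4$. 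By Corollary~\ref{Hnk-ATB}(\ref{Hnk-ATB-Rn}) the origin of $\R^n$ satisfies the $\ATB^*(\ee')$-condition with respect to the linear quasi-bicombing for some $L=L(n,\ee')$ and $R=\infty=K_1R$. Hence Lemma~\ref{LiftLemma}(\ref{Adv}) yields that $0\in V$ satisfies the $\ATB^*(\ee)$-condition with respect to the linear quasi-bicombing for the same $L$ and $R=\infty$.

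Since every translation of $V$ is an isometry preserving the family of line segments, every point of $V$ satisfies the same $\ATB^*(\ee)$-condition, so we may set $L_N=L_N(n,\ee):=L(n,\ee')$, depending only on $n$ and $\ee$. Finally, by Lemma~\ref{CAlemma} (as recorded in the remark after Definition~\ref{ATB*}), the $\ATB^*(\ee)$-condition at a point implies the $\ATB(\ee)$-condition there with the same constants, so $(V,\|\cdot\|)$ is globally $\ATB(\ee)$ for $L_N$, which is the assertion. The only substantive ingredient is John's theorem, which is precisely what makes $L_N$ independent of the particular norm; everything else is bookkeeping with the constants $K_1$, $K_2$, $\bb(\ee)$, $\bb(\ee')$ in the lift lemma.
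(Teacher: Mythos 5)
Your proposal is correct and follows essentially the same route as the paper: John's ellipsoid together with Lemma~\ref{LiftLemma}(\ref{Adv}) applied to the identity map and the linear quasi-bicombings, reducing to the Euclidean case of Corollary~\ref{Hnk-ATB}(\ref{Hnk-ATB-Rn}) after choosing $\ee'$ with $\bb(\ee')\le K_2\bb(\ee)/K_1$. The only cosmetic differences are your sharper John constant $\sqrt{n}$ in place of the paper's $n$ and the extra translation step, which is not needed since the identity map already serves as $\Phi$ at an arbitrary basepoint.
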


\begin{proof}
Let $(X,\|\cdot\|)$ be an $n$-dimensional normed space.
We show the claim by comparing the norm $\|\cdot\|$
with the Euclidean one $|\cdot|_E$ on $X$ provided by the John ellipsoid, satisfying
$\|x\| \le |x|_E \le n\|x\|$ for all $x \in X$.

Notice that the identity map $\id:(X,\|\cdot\|) \longrightarrow (X,|\cdot|_E)$ satisfies
the properties (\ref{StarRad}, \ref{StarAll}, \ref{StarGG}) of Lemma~\ref{LiftLemma}(\ref{Adv})
for $R = \infty$, $K_1 = n$, $K_2 = 1$ and the linear quasi-bicombings.
Take $\ee' > 0$ satisfying $\bb(\ee') \le \bb(\ee)/n$.
By Corollary~\ref{Hnk-ATB}(\ref{Hnk-ATB-Rn}),
there exists $L=L(n,\ee')$ such that every point in $(X, |\cdot|_E)$ satisfies
the $\ATB^*(\ee')$-condition with respect to the linear quasi-bicombing for $L$ and $R = \infty$.
Then, by Lemma~\ref{LiftLemma}(\ref{Adv}),
every point in $(X, \|\cdot\|)$ satisfies the $\ATB^*(\ee)$-condition
with respect to the linear quasi-bicombing for $L$ and $R = \infty$.
\end{proof}



Combining this with Theorem~\ref{ATB-to-SC} provides an alternative proof of the result in \cite{ST},
the rectifiability of bounded self-contracted curves in finite-dimensional normed spaces.

\subsection{Finsler manifolds}

From the study of the case of normed spaces in the previous subsection,
it is natural to expect a generalization to Finsler manifolds.
We refer to the books \cite{IntroRFG,ShenLFG} for the basics of Finsler geometry.
By a \emph{$C^{\infty}$-Finsler manifold}
we mean a connected $C^{\infty}$-manifold $M$ without boundary
equipped with a nonnegative function $F:TM \longrightarrow [0,\infty)$ satisfying:
\begin{enumerate}[(1)]
\item $F$ is $C^{\infty}$ on $TM \setminus 0$,
\item $F(cv)= \vert c \vert F(v)$ for all $v \in TM$ and $c \in \R$,
\item For any $x \in M$, the function $F^2|_{T_xM}$ has the positive-definite Hessian
on $T_xM \setminus \{0\}$.
\end{enumerate}

In general we assume the homogeneity $F(cv)=cF(v)$ only for $c>0$,
however, in this article we assume the \emph{reversibility} ($F(-v)=F(v)$)
in order to restrict ourselves to metric spaces in the ordinary sense for simplicity.

Associated with $F$ one can define the distance function
\[ d(x,y):=\inf_{\eta} \int_0^1 F\big( \dot{\eta}(t) \big) \,dt, \]
where $\eta:[0,1] \longrightarrow M$ runs over all piecewise $C^1$-curves from $x$ to $y$.
(The reversibility implies $d(y,x)=d(x,y)$.)
A $C^{\infty}$-curve $\gamma:I \longrightarrow M$ from an interval $I \subset \R$
is called a \emph{geodesic} if it is locally $d$-minimizing and
has constant speed ($F(\dot{\eta})$ is constant).
We say that $(M,F)$ is \emph{complete} if any geodesic $\gamma:[0,1] \longrightarrow M$
can be extended to a geodesic from $\R$.
In other words, the \emph{exponential map} $\exp$
(sending $v \in TM$ to $\gamma(1)$ with $\gamma$ being the geodesic with $\dot{\gamma}(0)=v$)
is well-defined on the whole tangent bundle.

\begin{proposition}[Local $\ATB$ of Finsler manifolds]\label{Fin-are-ATB}
Every Finsler manifold $(M,F)$ is locally $\ATB(\ee)$ for any $0<\ee<\pi/2$.
\end{proposition}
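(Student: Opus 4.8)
The plan is to reduce the Finsler case to the normed-space case already handled in Proposition~\ref{pr:norm}, using the exponential map as the comparison map $\Phi$ in Lemma~\ref{LiftLemma}(\ref{Adv}). First I would fix a point $x_0 \in M$ and work in a small metric ball $B_{r_0}(x_0)$; by standard Finsler geometry, after shrinking $r_0$ we may assume that for every $p$ in this ball the exponential map $\exp_p : B_{r_0}(0_p) \subset (T_pM, F_p) \longrightarrow M$ is a diffeomorphism onto its image and that $B_{r_0}(p)$ is contained in that image. The tangent space $(T_pM, F_p)$ is an $n$-dimensional normed space, so by Proposition~\ref{pr:norm} its origin satisfies the $\ATB^*(\ee')$-condition with respect to the linear quasi-bicombing for $L_N(n,\ee')$ and $R=\infty$, and in particular for $R = K_1 R$ with whatever finite radius we need.

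Next I would set up the hypotheses of Lemma~\ref{LiftLemma}(\ref{Adv}) with $\Phi = \exp_p^{-1}$ viewed as a map from $B_R(p) \subset M$ into $(T_pM, F_p)$ (or rather the other direction, $\Phi = \exp_p$ from a ball in $T_pM$ into $M$, which is the orientation the lemma is stated in). The radial isometry property is exact: $d(p, \exp_p(v)) = F_p(v)$ for $v$ in the injectivity domain, since radial geodesics are minimizing there; this gives condition (\ref{StarRad}) with $K_1 = 1$, and in fact also the reverse, but we only need the stated inequality. Condition (\ref{StarGG}) holds because rays from the origin in $T_pM$ map under $\exp_p$ to minimizing geodesics emanating from $p$ (within the injectivity ball), so the linear quasi-bicombing on a neighborhood of the origin maps into geodesics of $M$; one must be slightly careful that the relevant $\GG_X$ on $M$ only needs to contain these radial geodesics, so taking $\GG_X$ to be any quasi-bicombing extending them — or simply restricting attention to the radial ones as in the remark after Lemma~\ref{ATBs-from-LRB} — suffices. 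The remaining point, condition (\ref{StarAll}), is the bi-Lipschitz lower bound $d(\exp_p v, \exp_p w) \ge K_2 F_p(v - w)$ for $v,w$ in the ball; this follows from the smoothness of $\exp_p$ and the nondegeneracy of its differential at the origin ($d(\exp_p)_0 = \id$), after possibly shrinking $r_0$ once more, and $K_2$ can be taken close to $1$, uniformly in $p \in B_{r_0}(x_0)$ by compactness of $\overline{B_{r_0/2}(x_0)}$.

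The main obstacle — and the only place real care is needed — is obtaining all these estimates \emph{uniformly in $p$} over a fixed ball, so that a single pair $(L, R)$ works for every $p \in B_R(x_0)$ as required by the definition of locally $\ATB(\ee)$. This is where I would invoke compactness: the injectivity radius is bounded below on $\overline{B_{r_0/2}(x_0)}$, the Finsler metric $F_p$ on $T_pM$ varies continuously with $p$ so the John-ellipsoid comparison constants in Proposition~\ref{pr:norm} can be bounded uniformly (giving a uniform $\ee'$ via $\bb(\ee') \le K_2\bb(\ee)/K_1$), and the differential of $\exp_p$ is uniformly close to an isometry on a uniformly small ball. Once these uniform bounds are in hand, Lemma~\ref{LiftLemma}(\ref{Adv}) yields that every $p$ in the chosen ball satisfies the $\ATB^*(\ee)$-condition with respect to the appropriate quasi-bicombing for uniform constants $L$ and $R$, and then Lemma~\ref{CAlemma} (via the remark following Definition~\ref{ATB*}) upgrades this to the $\ATB(\ee)$-condition with the same $L$ and $R$. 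Since $x_0 \in M$ was arbitrary, $(M,F)$ is locally $\ATB(\ee)$, and since $\ee' \to 0$ as $\ee \to 0$ the argument works for every $0 < \ee < \pi/2$.
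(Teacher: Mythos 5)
Your proposal is correct and follows essentially the same route as the paper: compare $B_R(p)$ with the normed tangent space $(T_pM,F_p)$ via the exponential map on a uniformly chosen small ball, verify conditions (\ref{StarRad})--(\ref{StarGG}) of Lemma~\ref{LiftLemma}(\ref{Adv}) with $K_1=1$ and a uniform bi-Lipschitz constant $K_2=C^{-1}$, invoke Proposition~\ref{pr:norm} with $\bb(\ee')\le K_2\bb(\ee)/K_1$, and upgrade $\ATB^*$ to $\ATB$ via Lemma~\ref{CAlemma}. One small correction: the orientation of Lemma~\ref{LiftLemma}(\ref{Adv}) requires $\Phi=\exp_p^{-1}:B_R(p)\to T_pM$ (mapping into the space where $\ATB^*$ is already known), so your first choice was right and the parenthetical ``other direction'' remark, together with the corresponding form of condition (\ref{StarAll}), should be dropped; since $\exp_p$ is bi-Lipschitz in both directions on the small ball this is only a bookkeeping slip, not a gap.
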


\begin{proof}
Fix arbitrary $x \in M$ and take small $R>0$ such that
any two points in $B^M_R(x)$ are connected by a unique minimizing geodesic and,
for any $p \in B^M_R(x)$, the exponential map
$\exp_p: B^{T_pM}_R(0) \longrightarrow B^M_R(p)$
is $C$-bi-Lipschitz for some $C \ge 1$ in the sense that
\[ C^{-1} F(v'-v) \le d\big( {\exp}_p(v),\exp_p(v') \big) \le CF(v'-v) \]
for all $v,v' \in B^{T_pM}_R(0)$. 
The uniqueness of minimizing geodesics ensures that
$B^M_R(p)$ has the unique quasi-bicombing.

The map $\exp_p^{-1}:B^M_R(p) \longrightarrow T_pM$ satisfies
the conditions in Lemma~\ref{LiftLemma}(\ref{Adv})
for $K_1=1$, $K_2 = C^{-1}$, and the unique quasi-bicombings.
Take $\ee'>0$ satisfying $\bb(\ee') \le \bb(\ee)/C$.
By Proposition~\ref{pr:norm}, $0 \in T_pM$ satisfies the $\ATB^*(\ee')$-condition
for $L = L_N(n, \ee')$ and $R=\infty$, where $n=\dim M$.
Hence Lemma~\ref{LiftLemma}(\ref{Adv}) implies that
$p$ satisfies the $\ATB(\ee)$-condition for $L$ and $R$.
\end{proof}


We can improve Proposition~\ref{Fin-are-ATB} into a global one
on Berwald spaces of nonnegative flag curvature, with the help of an argument in \cite{IL}.
A \emph{Berwald space} is a Finsler manifold whose covariant derivative
is independent of the choice of a reference vector,
and then all tangent spaces are mutually linearly isometric.
The \emph{flag curvature} is a generalization of the sectional curvature in Riemannian geometry.
See \cite{IntroRFG} or \cite{IL} for the definition and more properties of Berwald spaces.
For example, Riemannian manifolds and (Minkowski) normed spaces are Berwald spaces.

\begin{proposition}[$\ATB$ of Berwald spaces]\label{Ber-are-ATB}
For $n \in \N$ and $0<\ee<\pi/2$, there exists $L_B=L_B(n,\ee)$ such that
every complete $n$-dimensional Berward space $(M,F)$ of nonnegative flag curvature
is globally $\ATB(\ee)$ for $L_B$.
\end{proposition}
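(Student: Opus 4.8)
The plan is to mimic the proof of Proposition~\ref{Fin-are-ATB} but exploit the special structure of Berwald spaces to make the bi-Lipschitz constant of the exponential map \emph{uniform} over the whole manifold, which is exactly what \cite{IL} provides. First I would recall that in a Berwald space all tangent spaces $(T_xM,F_x)$ are mutually linearly isometric, so there is a single Minkowski norm $(\R^n,\|\cdot\|_F)$ modeling every tangent space; let $n=\dim M$. The key input from \cite{IL} (their comparison argument for Berwald spaces of nonnegative flag curvature) is that the exponential map $\exp_p:T_pM\longrightarrow M$ is $1$-Lipschitz, or more generally nonexpanding in a suitable sense, as a consequence of nonnegative flag curvature together with the Berwald condition — analogous to the Cartan--Hadamard-type estimate but without curvature upper bounds needed because one only wants distances in $M$ to be \emph{no larger} than distances in the model tangent space. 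Concretely I would aim to establish, for every $p\in M$ and all $v,v'\in T_pM$,
\[
  d\big({\exp}_p(v),{\exp}_p(v')\big) \le \|v-v'\|_F .
\]
Together with the trivial radial equality $d(p,\exp_p(v))=\|v\|_F$ (geodesics emanating from $p$ are minimizing up to the first conjugate point, and under nonnegative flag curvature plus completeness one gets this globally, or at least on the relevant scale), this is the hard analytic heart of the argument.

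Given that estimate, the rest is bookkeeping via Lemma~\ref{LiftLemma}(\ref{Adv}). I would apply that lemma with $X=(\R^n,\|\cdot\|_F)$ the model Minkowski space, $Y=M$, $q$ an arbitrary point $p\in M$, $\Phi=\exp_p$ viewed through the linear isometry $T_pM\cong(\R^n,\|\cdot\|_F)$, taking $R=\infty$, $K_1=1$, $K_2=1$, $\GG_Y$ the (unique, up to the usual caveats) geodesic quasi-bicombing on $M$, and $\GG_X$ the linear quasi-bicombing on $\R^n$. Condition (\ref{StarRad}) is the radial equality; condition (\ref{StarAll}) is the displayed nonexpansion estimate; condition (\ref{StarGG}) holds because the image under $\exp_p$ of a line segment through $0$ is a geodesic ray from $p$, i.e.\ an element of $\GG_Y$ — here is where the Berwald hypothesis matters a second time, since straight lines through the origin map to geodesics. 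By Proposition~\ref{pr:norm} the point $0\in(\R^n,\|\cdot\|_F)$ satisfies the $\ATB^*(\ee')$-condition with respect to the linear quasi-bicombing for $L=L_N(n,\ee')$ and $R=\infty$; choosing $\ee'$ with $\bb(\ee')\le\bb(\ee)$ (trivially possible, in fact $\ee'=\ee$ works since $K_1=K_2=1$ forces the constraint $\bb(\ee')\le K_2\bb(\ee)/K_1=\bb(\ee)$), Lemma~\ref{LiftLemma}(\ref{Adv}) yields that $p$ satisfies $\ATB^*(\ee)$ with respect to $\GG_Y$ for $L$ and $R=\infty$. Since $p\in M$ was arbitrary and $L$ depends only on $n$ and $\ee$, we set $L_B:=L_N(n,\ee)$ and conclude, via Lemma~\ref{CAlemma}, that $M$ is globally $\ATB(\ee)$ for $L_B$.

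The main obstacle is the uniform nonexpansion estimate $d(\exp_p v,\exp_p v')\le\|v-v'\|_F$ holding \emph{globally} on a complete Berwald space of nonnegative flag curvature. In the Riemannian case this is false in general (positive curvature contracts distances only locally — think of the sphere), so one must be careful about what exactly \cite{IL} proves: likely the correct statement is not a global contraction but rather that \emph{spheres grow at most linearly}, i.e.\ an estimate of the form $d(\exp_p v,\exp_p v')\le C\|v-v'\|_F$ where $C$ is a dimensional constant, or a statement phrased directly in terms of separated sets on distance spheres that feeds into Lemma~\ref{ATBs-from-LRB}-type reasoning rather than Lemma~\ref{LiftLemma}. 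If the clean contraction fails, the fallback is to absorb the dimensional constant $C$ into $K_1$ or $K_2$ in Lemma~\ref{LiftLemma}(\ref{Adv}) and pick $\ee'$ with $\bb(\ee')\le\bb(\ee)/C$ accordingly — the argument is robust to this, at the cost of a worse (but still dimensional) $L_B(n,\ee)$. I would therefore structure the write-up so that the only place \cite{IL} is invoked is a single clearly-stated comparison inequality, and everything downstream is the mechanical application of the lemmas above.
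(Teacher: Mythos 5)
Your reduction scheme is the same as the paper's (compare $M$ with the tangent Minkowski space via Lemma~\ref{LiftLemma}(\ref{Adv}) and Proposition~\ref{pr:norm}, with $R=\infty$ and $L_B$ depending only on $n$ and $\ee$), but the step you yourself call ``the hard analytic heart'' is left unproved, and this is a genuine gap. You attribute to \cite{IL} a global nonexpansion estimate $d(\exp_p v,\exp_p v')\le \|v-v'\|_F$; that is not what \cite{IL} supplies, and your fallback (``absorb a dimensional constant $C$'') names no mechanism that would produce such a $C$. What the paper actually does is: for minimal geodesics $\eta,\xi$ from $p$ one wants the Busemann concavity $d(\eta(t),\xi(t))\ge t\,d(\eta(1),\xi(1))$; for a Berwald space of nonnegative flag curvature this is only cited in the absence of conjugate points (\cite{Kell}), so in general one passes to a Riemannian metric $g$ \emph{affinely equivalent} to $F$ (this is the role of \cite{IL}; e.g.\ the Binet--Legendre metric of \cite{MT}), which has nonnegative sectional curvature and satisfies $C_n^{-1}F\le\sqrt{g}\le C_n F$ with $C_n=C_n(n)$. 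Global Busemann concavity for complete nonnegatively curved Riemannian manifolds then gives $d_F(\eta(t),\xi(t))\ge C_n^{-2}t\,d_F(\eta(1),\xi(1))$, and letting $t\to 0$ yields $d_F(\eta(1),\xi(1))\le C_n^2 F(\dot\xi(0)-\dot\eta(0))$. This inequality, with $K_1=1$, $K_2=C_n^{-2}$ and $\bb(\ee')\le C_n^{-2}\bb(\ee)$, is what feeds Lemma~\ref{LiftLemma}(\ref{Adv}); without this (or an equivalent comparison statement) your argument does not close.

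Two smaller corrections. First, your worry about the sphere is off-target: restricted to \emph{minimal} geodesics from $p$, the constant-$1$ estimate is true for Riemannian manifolds of nonnegative curvature (Toponogov/Busemann concavity); the genuine obstruction in the Berwald case is the possible presence of conjugate points, which is exactly what the affinely equivalent Riemannian metric circumvents, at the price of $C_n^2$. Second, the radial identity $d(p,\exp_p v)=\|v\|_F$ does \emph{not} hold globally under nonnegative curvature (cut locus); the comparison map must be defined, as in Propositions~\ref{CBBs-are-ATB} and \ref{Fin-are-ATB}, by sending $y$ to the initial vector of a chosen minimal geodesic from $p$ to $y$, rescaled to length $d(p,y)$ (note also that in Lemma~\ref{LiftLemma} the map goes from $M$ into the model space, i.e.\ it is an inverse of the exponential map along minimal geodesics, not $\exp_p$ itself).
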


\begin{proof}
Fix $p \in M$, $y,z \in M \setminus \{p\}$ and let $\eta,\xi:[0,1] \longrightarrow M$
be minimal geodesics from $p$ to $y$, $p$ to $z$, respectively.
If there is no conjugate point to $p$,
then being a Berwald space of nonnegative curvature implies
\begin{equation}\label{eq:Buse}
d\big( \eta(t),\xi(t) \big) \ge t d\big( \eta(1),\xi(1) \big)
\end{equation}
for all $t \in [0,1]$
(this is the so-called \emph{Busemann concavity}, see \cite[Remark before Lemma~2.7]{Kell}).
In general, as in \cite{IL},
we can find a Riemannian metric $g$ which is affinely equivalent to $F$,
having the nonnegative sectional curvature, and satisfies
\[ C_n^{-1} F(v) \le \sqrt{g(v,v)} \le C_n F(v) \]
for all $v \in TM$ for some constant $C_n \ge 1$ depending only on $n$
(the Binet--Legendre metric in \cite{MT} does this job for example).
Since the Busemann concavity \eqref{eq:Buse} globally holds
for complete Riemannian manifolds of nonnegative sectional curvature,
we obtain
\begin{align*}
d_F\big(\eta(t),\xi(t) \big) &\ge C_n^{-1} d_g\big(\eta(t),\xi(t) \big)
 \ge C_n^{-1} t d_g\big(\eta(1),\xi(1) \big) \\
&\ge C_n^{-2} t d_F\big(\eta(1),\xi(1) \big).
\end{align*}
By taking the limit of $d_F(\eta(t),\xi(t))/t$ as $t \to 0$,
we have $d_F(\eta(1),\xi(1)) \le C_n^2 F(\dot{\xi}(0)-\dot{\eta}(0))$.
Hence one can finish the proof by applying Lemma~\ref{LiftLemma}(\ref{Adv})
in the same way as in Proposition~\ref{Fin-are-ATB} with $R=\infty$.
\end{proof}

\subsection{Cayley graphs of virtually abelian groups}\label{Cayley}

In this subsection we prove the following.

\begin{proposition}[$\ATB$ of Cayley graphs]\label{virt-abel}
Let $G$ be a finitely generated
  virtually abelian group with a given symmetric set
  of generators, and $K$ be the Cayley graph of
  $G$ with respect to this set of generators.
  Then $K$ is globally $\ATB(\ee)$ for every $\ee>0$.
\end{proposition}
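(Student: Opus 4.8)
The plan is to reduce the problem, via Lemma~\ref{LiftLemma}(\ref{Simp}), to a statement about a finite-index normal abelian subgroup, and ultimately to the Euclidean case covered by Corollary~\ref{Hnk-ATB}(\ref{Hnk-ATB-Rn}) and Proposition~\ref{pr:norm}. Concretely: let $A \le G$ be a free abelian subgroup of rank $n$ and finite index $m$, and note that the word metric on $K$ is quasi-isometric to a norm on $\R^n$ when restricted to $A$, but quasi-isometry is too weak for the $\ATB$ machinery, which needs the exact radial identity \eqref{SimpRad}. So the first, and main, step is to produce an auxiliary metric space $Y$ (a finite-dimensional normed space, or $\R^n$) together with a map $\Phi: Y \longrightarrow K$ (or its partial inverse) satisfying conditions \eqref{SimpRad} and \eqref{SimpAll} of Lemma~\ref{LiftLemma}(\ref{Simp}) exactly, not merely up to multiplicative constants. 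Since exact radial isometry is hard to arrange directly, I would instead argue at the level of comparison angles: I expect the right statement to be that for any $p \in K$, the comparison angle $\wt\angle y p y'$ for triples of group elements is controlled by the corresponding Euclidean angle in $\R^n$ up to an error that can be made uniformly small, which is enough because the $\ATB(\ee)$-conclusion is an open condition.

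The key steps, in order, would be as follows. First, reduce to $G = \Z^n$ with an arbitrary finite symmetric generating set $S$: passing from $G$ to a finite-index abelian subgroup $A$ changes the word metric by only a bounded additive error (each element of $G$ is within bounded distance of $A$), and an additive bounded error in the distances perturbs all comparison angles of far-apart points by an amount tending to $0$; since $\ATB(\ee)$ for $p$ only requires finding \emph{one} pair with small angle among $L$ points, and we are free to take the radius $R = \infty$, controlling the large-scale geometry suffices. Second, for $G = \Z^n$ with generating set $S$, the word metric $d_S$ has stable norm $\|\cdot\|_S$ on $\R^n$ (the asymptotic cone is the normed space $(\R^n, \|\cdot\|_S)$, whose unit ball is the convex hull of $S$), and $|d_S(0,v) - \|v\|_S|$ is bounded uniformly in $v \in \Z^n$ — this is the standard fact that the word metric on $\Z^n$ is within bounded distance of its stable norm. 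Third, combine homogeneity of $\Z^n$ (translation by group elements is an isometry of $K$, so it suffices to treat $p = e$) with the previous bounded-error estimate to see that the comparison angles $\wt\angle y\, e\, y'$ in $K$ differ from the Euclidean-type comparison angles in $(\R^n, \|\cdot\|_S)$ by an error $o(1)$ as the points recede to infinity. Fourth, invoke Proposition~\ref{pr:norm}: the finite-dimensional normed space $(\R^n, \|\cdot\|_S)$ is globally $\ATB(\ee/2)$, say, for some $L = L(n, \ee)$ depending only on $n$ and $\ee$; translating back through the $o(1)$ angle estimate, any $L$ points in $K \setminus \{e\}$ that are sufficiently far from $e$ must contain a pair with comparison angle $< \ee$. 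Finally, absorb the ``sufficiently far'' caveat: points near $e$ are finite in number (bounded by the volume of a fixed ball, since $K$ is a locally finite graph), so enlarging $L$ by this bounded count handles them, and we get a uniform $L = L(G, S, \ee)$ working for $R = \infty$ at every vertex.

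The main obstacle is the third step: making precise the claim that a uniformly bounded \emph{additive} perturbation of the distance function produces a uniformly small perturbation of all relevant comparison angles. This is false for pairs of points that are close together (where a bounded additive error is a large relative error), so the argument must exploit that $\ATB(\ee)$ is insensitive to close pairs — either because a close pair automatically has small comparison angle from the other vertices' perspective, or, more carefully, by first discarding from $y_1, \dots, y_L$ any pair that is already within bounded distance (such a pair, being ``radially nearly aligned'' as seen from a far third point, forces small $\wt\angle$), and running the normed-space argument only on the remaining well-separated points, at the cost of again enlarging $L$. Handling the non-abelian-to-abelian reduction cleanly (so that the constant $L$ genuinely depends only on $G$ and $S$ and the generating set of $A$ is controlled) is a secondary bookkeeping nuisance, but conceptually routine. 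I expect the write-up to lean on the quantitative version of ``$\Z^n$ word metric is bounded distance from its stable norm,'' together with the observation that the definition of $\ATB(\ee)$, requiring only one small-angle pair among $L$ and permitting $R = \infty$, is robust enough to swallow all the bounded errors.
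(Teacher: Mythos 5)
Your overall strategy coincides with the paper's: place the Cayley graph within bounded additive distance of a finite-dimensional normed space, use Proposition~\ref{pr:norm} for the normed space, transfer back by the observation that bounded additive errors in distances perturb comparison angles of configurations far from the apex only slightly, and absorb the points near the apex by a separate finite count. Your steps 3--5 are exactly the content of Lemma~\ref{tre} and Lemma~\ref{loc-glob} (the split of $y_1,\dots,y_N$ into $B_D(p)$ and its complement, with the two contributions to $L$ added), so that half of the argument is fine; note only that ``points near $e$ are finite in number'' is literally true for vertices, whereas $\ATB(\ee)$ quantifies over all points of the metric graph, so the near part is really the assertion that $K$ satisfies $\ATB(\ee)$ at each fixed scale $R$ with some finite $L_X(R)$ --- the first hypothesis of Lemma~\ref{loc-glob}, which the paper likewise uses.

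The genuine gap is in your first two steps. Passing to a finite-index subgroup $A\cong\Z^n$ gives a coarsely dense orbit, but the metric you are then working with on $A$ is the restriction of the word metric of $G$, which is an $A$-invariant metric and in general \emph{not} a word metric of $A$, nor within bounded distance of one: for $G=\Z^2$ with the standard generators and $A=2\Z\times\Z$, the induced metric is $2|m|+|n|$ in the coordinates of $A$, whose unit ball is not the convex hull of a lattice generating set, so it is not at bounded distance from any word metric of $A$. Consequently the fact you lean on --- ``the word metric on $\Z^n$ is within bounded distance of its stable norm'' --- does not cover the metric actually at hand. What is needed is a Burago-type theorem for general $\Z^n$-periodic metrics, i.e.\ for geodesic spaces carrying a cocompact isometric $\Z^n$-action, and supplying exactly this is the bulk of the paper's proof: Theorem~\ref{group-norm}, proved via the equivariant map $F$, the halving estimate $d(x_0,g(x_0)) \le d(x_0,g^2(x_0))/2 + C$ of Lemma~\ref{Keylemma} (which rests on the intermediate-value Lemma~\ref{intermed}) together with Lemma~\ref{subad}, and Corollary~\ref{corGHCayley}, which builds $F$ for the Cayley graph. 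So your proposal is structurally the paper's proof, but it outsources its main technical ingredient to a citation that, as stated, does not apply; to close the gap you must either invoke Burago's periodic-metrics theorem in its general form, applied to $K$ with the cocompact $\Z^n$-action rather than to a word metric on $A$, or reprove it as the paper does.
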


Being \emph{virtually abelian} means that $G$ has an abelian subgroup of finite index.
The proof of Proposition~\ref{virt-abel} is based on the observation that 
the Cayley graph of a virtually abelian group is 
of finite Gromov--Hausdorff distance from a normed space.
This observation follows from Theorem~\ref{group-norm} below,
which generalizes a result by D.~Burago \cite{BU}
(see also \cite{MO, BIK})
for the universal covering of a torus.
The proof mainly follows the same lines.

\subsubsection{Periodic metrics }

\begin{theorem}\label{group-norm}
Let $(X,d)$ be a proper, geodesic metric space
on which $\Gamma=\ZZ^n$ acts
cocompactly and properly discontinuously by isometries.
Suppose that there is a continuous map
$F:X \longrightarrow \RR^n$ such that the action $\Gamma \curvearrowright X$
is equivariant to the standard action
$\ZZ^n \curvearrowright \RR^n$ by shifts.
Then there exists a norm $\Vert \cdot \Vert$ of $\RR^n$
such that the Gromov--Hausdorff distance between $(X,d)$ and $(\RR^n,\Vert \cdot \Vert)$
is finite.
\end{theorem}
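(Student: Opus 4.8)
The plan is to follow D.~Burago's argument for periodic metrics \cite{BU}, the norm $\Vert\cdot\Vert$ being the \emph{stable norm} of $d$. Fix a basepoint $x_0\in X$. For $\gg\in\GG=\ZZ^n$, Fekete's subadditivity lemma applied to $k\mapsto d(x_0,\gg^k x_0)$ (which is subadditive because each $\gg^k$ is an isometry) shows that
\[ \ell(\gg):=\lim_{k\to\infty}\frac{d(x_0,\gg^k x_0)}{k} \]
exists and satisfies $\ell(\gg)\le d(x_0,\gg x_0)$ and $\ell(\gg^m)=|m|\,\ell(\gg)$; commutativity of $\GG$ gives $\ell(\gg\gg')\le\ell(\gg)+\ell(\gg')$. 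Since $X$ is proper and geodesic and $\GG$ acts cocompactly and properly discontinuously by isometries, the Milnor--\v{S}varc lemma provides $A\ge 1$, $B\ge 0$ with $A^{-1}|\gg|_S-B\le d(x_0,\gg x_0)\le A|\gg|_S+B$ for the word length $|\cdot|_S$; as the word metric on $\ZZ^n$ is bi-Lipschitz to a norm, this forces $0<\ell(\gg)<\infty$ for $\gg\neq e$ and $\ell$ to be bounded on bounded sets. Extending $\ell$ first to $\mathbb{Q}^n$ by homogeneity and then, using convexity and local boundedness, continuously to $\RR^n$, one obtains a genuine norm $\Vert\cdot\Vert$; here I tacitly identify $\gg\in\ZZ^n$ with the corresponding vector of $\RR^n$, compatibly with $F$ (since $F(\gg\cdot x)=F(x)+\gg$).

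The technical heart is the uniform comparison
\[ \big|\,d(x_0,\gg x_0)-\Vert\gg\Vert\,\big|\le C\qquad\text{for all }\gg\in\GG. \]
The inequality $\Vert\gg\Vert=\ell(\gg)\le d(x_0,\gg x_0)$ is immediate. For the reverse bound $d(x_0,\gg x_0)\le\Vert\gg\Vert+C$ I would argue as in \cite{BU}: writing $v\in\ZZ^n$ for $\gg$, choose $N$ so large that $d(x_0,(Nv)x_0)\le N\Vert v\Vert+1$ (possible since $d(x_0,(kv)x_0)/k\to\Vert v\Vert$), take a minimizing geodesic from $x_0$ to $(Nv)x_0$, subdivide it into $N$ arcs of equal length, and replace the subdivision points by nearby orbit points $\gg_0 x_0=x_0,\gg_1 x_0,\dots,\gg_N x_0=(Nv)x_0$ (cocompactness supplies a uniform $\rho$ with $\GG\cdot\overline{B_\rho(x_0)}=X$). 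The consecutive ``steps'' $w_i:=\gg_i-\gg_{i-1}\in\ZZ^n$ then satisfy $\sum_i w_i=Nv$ and $\sum_i d(x_0,w_i x_0)\le d(x_0,(Nv)x_0)+2\rho N\le N\Vert v\Vert+2\rho N+1$; a combinatorial/averaging argument over these steps, combined with the subadditivity of $\gg\mapsto d(x_0,\gg x_0)$, then produces a path from $x_0$ to $\gg x_0$ of length at most $\Vert v\Vert+C$ with $C$ depending only on $\rho$. \textbf{I expect this step to be the main obstacle.} Subadditivity alone gives only $d(x_0,(kv)x_0)=k\Vert v\Vert+o(k)$ (equivalently, that the asymptotic cone of $X$ is $(\RR^n,\Vert\cdot\Vert)$); upgrading the error from $o(k)$ to $O(1)$ genuinely exploits the cocompactness of the action to perform surgery on near-geodesics, and reproducing this estimate faithfully is the delicate part.

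Granting the uniform comparison on orbit points, it remains to pass to all of $X$ and to conclude. For $x,y\in X$ pick $\gg,\gg'\in\GG$ with $d(x,\gg x_0),d(y,\gg' x_0)\le\rho$; since $F$ is continuous and $\GG$-equivariant, $X/\GG$ is compact and $X$ is proper, $F$ is coarsely Lipschitz (it maps $\rho$-balls to sets of uniformly bounded $\Vert\cdot\Vert$-diameter), so routine triangle inequalities turn the orbit estimate into
\[ \big|\,d(x,y)-\Vert F(x)-F(y)\Vert\,\big|\le C'\qquad\text{for all }x,y\in X. \]
Finally $F(\GG x_0)=F(x_0)+\ZZ^n$ is cocompact in $(\RR^n,\Vert\cdot\Vert)$, hence $F(X)$ is $R_0$-dense there for some $R_0<\infty$; thus the relation $\{(x,v)\in X\times\RR^n:\Vert v-F(x)\Vert\le R_0\}$ is a correspondence of distortion at most $C'+2R_0$, and the standard criterion for the Gromov--Hausdorff distance yields that the Gromov--Hausdorff distance between $(X,d)$ and $(\RR^n,\Vert\cdot\Vert)$ is finite, as claimed.
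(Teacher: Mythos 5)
Your overall skeleton coincides with the paper's: define the stable norm $\Vert g\Vert=\lim_k d(x_0,g^k(x_0))/k$ via Fekete, show it is a genuine norm, prove a uniform comparison $\vert d(x_0,g(x_0))-\Vert g\Vert\vert\le C$ on the orbit, and then use cocompactness (and the equivariance of $F$) to conclude finiteness of the Gromov--Hausdorff distance. The peripheral parts of your argument (positivity via Milnor--\v{S}varc, extension to a norm on $\RR^n$, the final correspondence argument) are fine.

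However, the step you yourself flag as ``the main obstacle'' is exactly the theorem's content, and your sketch of it does not close the gap. Subdividing a geodesic from $x_0$ to $(Nv)x_0$ into $N$ arcs and replacing division points by orbit points gives steps $w_1,\dots,w_N\in\ZZ^n$ with $\sum_i w_i=Nv$ and $\sum_i d(x_0,w_i x_0)\le N\Vert v\Vert+2\rho N+1$; averaging then only bounds $d(x_0,w_i x_0)$ for \emph{some} step $w_i$, which in general is a different element of $\ZZ^n$ than $v$, and subadditivity runs the wrong way (it bounds $d(x_0,(Nv)x_0)$ from above by $\sum_i d(x_0,w_ix_0)$, not $d(x_0,vx_0)$ by anything). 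What is missing is a combinatorial device that extracts from a near-geodesic a subfamily of arcs whose total $F$-displacement is \emph{exactly} (up to bounded rounding) the prescribed vector. The paper supplies this via Burago's intermediate-value lemma (Lemma~\ref{intermed}): applied to $F\circ\gamma$ along a geodesic $\gamma$ from $g^2(x_0)$ to $x_0$, it splits $\gamma$ into two interleaved families of at most $n+1$ subarcs, each of $F$-displacement $\tfrac12 F(g^2(x_0))\approx\wt g$; after replacing the $n$ division points by orbit points (cost $O(\Theta(\sqrt n))$) and using the abelian rearrangement identity $d(x,\aa(x))=d(y,\aa(y))$ for points in one orbit, each family concatenates into a path from $x_0$ to a point within $\Theta(n\sqrt n)$ of $g(x_0)$. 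This yields the halving inequality $d(x_0,g(x_0))\le\tfrac12 d(x_0,g^2(x_0))+C$ with $C$ independent of $g$ (Lemma~\ref{Keylemma}), which via the elementary subadditivity lemma (Lemma~\ref{subad}) upgrades the $o(k)$ error to the uniform bound $\Vert g\Vert\le d(x_0,g(x_0))\le\Vert g\Vert+2C$. Without this (or an equivalent surgery argument), your proof establishes only that the asymptotic cone is $(\RR^n,\Vert\cdot\Vert)$, not finiteness of the Gromov--Hausdorff distance, so the proposal as it stands has a genuine gap at its central step.
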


\begin{proof}
Fix $x_0\in X$ and assume $F(x_0)=0$ without loss of generality.
Define the function
\[ \Vert g\Vert := \lim_{k\to\infty}\frac{d(x_0,g^k(x_0))}{k}. \]
This limit exists by Fekete's sub-additive lemma
because the function $k \longmapsto d(x_0, g^k(x_0))$ is sub-additive,
and the limit does not depend on the choice  of $x_0$.
Note that the function $\Vert\cdot\Vert$ satisfies the homogeneity
\begin{equation}\label{homog}
\Vert g^m \Vert =|m| \cdot \Vert g \Vert
\end{equation}
for $m \in \ZZ$ as well as the triangle inequality
\begin{equation}\label{triang}
\Vert g\cdot h\Vert\le\Vert g\Vert+\Vert h\Vert
\end{equation}
for $g,h \in \Gamma$ (see, e.g., \cite{LebNorm}).

We need two more lemmas to finish the proof of the theorem.
The first one is an easy observation on sub-additive functions and the proof is omitted.

\begin{lemma}\label{subad} 
Let $f:\ZZ \longrightarrow (0,\infty)$ be a sub-additive function
such that $\lim_{n \to \infty} f(n) = \infty$ and 
$f(n)\le f(2n)/2+C$ for some $C>0$ and all $n \in \ZZ$. 
Then we have $c:=\lim_{n\to\infty} f(n)/n>0$ and
$cn\le f(n)\le cn+2C$ for all $n \in \ZZ$.
\end{lemma}

The next lemma is the main technical ingredient of the proof of Theorem~\ref{group-norm},
that allows us to apply Lemma~\ref{subad} to the function $d(x_0, g^k(x_0))$.

\begin{lemma}\label{Keylemma}
There exists $C\ge 0$ such that $d(x_0, g(x_0))\le d(x_0,g^2(x_0))/2+C$
holds for every $g \in \Gamma$.
\end{lemma}

Let us postpone the proof of Lemma~\ref{Keylemma} and finish the proof of the theorem.
It follows from Lemmas~\ref{subad} and \ref{Keylemma} that
$\Vert\cdot\Vert$ is bounded away from $0$.
Hence, together with \eqref{homog} and \eqref{triang},
we find that $\Vert \cdot \Vert$ defines a norm of $\R^n$
including $\ZZ^n=\Gamma$.
It also follows from Lemma~\ref{subad} that $(\Gamma(x_0),d)$
is of finite Gromov--Hausdorff distance from $(\Gamma, \Vert\cdot\Vert)$.
Since $\Gamma$ acts cocompactly on $X$,
Theorem~\ref{group-norm} follows.
\end{proof}

It remains to prove Lemma~\ref{Keylemma}.

\begin{proof}[Proof of  Lemma~$\ref{Keylemma}$]
The proof is based on the following analogue of the
intermediate value theorem proven in \cite{BU}.

\begin{lemma}\label{intermed}
Let $f:[0,1] \longrightarrow \RR^n $ be a continuous map with $f(1)=0$.
Then there exist $0\le a_1\le a_2\le\cdots\le a_n\le 1$ such that
\[ \sum_{i=1}^n(-1)^{i+1}f(a_i)=\frac{f(0)}{2}. \]
In other words, if we set $a_0 = 0$ and $a_{n+1} = 1$, then we have
\begin{equation}
\sum_{\substack{0 \le i \le n, \\ i \equiv 1\bmod 2}} \big( f(a_i) - f(a_{i+1}) \big) = 
\sum_{\substack{0 \le i \le n, \\ i \equiv 0\bmod 2}} \big( f(a_i) - f(a_{i+1}) \big) = 
\frac{f(0)}{2}.\label{2Copies}
\end{equation}
\end{lemma}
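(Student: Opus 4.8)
The plan is to prove the "intermediate value" statement of Lemma~\ref{intermed} for continuous maps $f:[0,1]\longrightarrow\RR^n$ with $f(1)=0$, and then to feed this into the proof of Lemma~\ref{Keylemma}. I would first record the reformulation: writing $a_0=0$ and $a_{n+1}=1$, the telescoping sums over odd-indexed and even-indexed blocks are
\[
\sum_{i \text{ odd}} \big(f(a_i)-f(a_{i+1})\big) + \sum_{i \text{ even}} \big(f(a_i)-f(a_{i+1})\big) = f(a_0)-f(a_{n+1}) = f(0),
\]
so the two equalities in \eqref{2Copies} are equivalent to the single equation $\sum_{i=1}^n(-1)^{i+1}f(a_i)=f(0)/2$, and it is this form I would actually prove.

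The approach I would take follows \cite{BU}: induct on $n$, coordinate by coordinate, using the one-dimensional intermediate value theorem. For $n=1$: given continuous $f:[0,1]\to\RR$ with $f(1)=0$, the function $a\mapsto f(a)-f(0)/2$ takes the value $f(0)/2$ at $a=0$ and the value $-f(0)/2$ at $a=1$, hence by the classical IVT there is $a_1\in[0,1]$ with $f(a_1)=f(0)/2$. For the inductive step, suppose the claim holds in dimension $n-1$. Given $f=(f_1,\dots,f_n):[0,1]\to\RR^n$, I would first produce the monotone points $0\le a_1\le\cdots\le a_{n-1}\le a_n\le 1$ handling the first $n-1$ coordinates, and then adjust the last point (or, more carefully, set up the induction so that at each stage one splits the current interval at a point where the running alternating sum of the next coordinate hits the required half-value, restricting attention to the appropriate subinterval on which the remaining coordinates must still be balanced). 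The key device is that after choosing $a_1\le\cdots\le a_j$ one is left with a continuous path on $[a_j,1]$ whose endpoint value in the coordinates $j+1,\dots,n$ equals the negative of the accumulated alternating sum, so the hypothesis $f(1)=0$ propagates down the induction in the correct shifted form.

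The step I expect to be the main obstacle is bookkeeping the monotonicity constraint $a_1\le a_2\le\cdots\le a_n$ simultaneously with the alternating-sign condition: a naive coordinatewise application of the IVT produces points in no particular order. The fix, as in \cite{BU}, is to carry out the induction not on "the next coordinate" in isolation but on a nested sequence of subintervals $[0,1]\supseteq[a_1,1]\supseteq\cdots$, at each stage applying the one-dimensional IVT to the partial sum $\sum_{i} (-1)^{i+1} f_k(a_i)$ viewed as a function of the next breakpoint ranging over the current subinterval, and observing that continuity together with the boundary values (the current endpoint value of $f_k$ being zero at $1$) forces the desired intermediate value to be attained inside. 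One must check that the residual target value for the later coordinates is exactly what the inductive hypothesis requires, which is a direct computation with the telescoping identity above. Once Lemma~\ref{intermed} is in hand, Lemma~\ref{Keylemma} follows by applying it to the map $f(t):=F(\gamma(t))-F(g(\gamma(t)))$ (suitably normalized) along a geodesic $\gamma$ from $x_0$ to $g(x_0)$, using properness and cocompactness to bound the error term $C$ uniformly — but that is the content of the subsequent proof, not of the statement at hand.
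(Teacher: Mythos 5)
Your reformulation and the base case $n=1$ are fine, but the inductive step is a genuine gap, and I do not believe it can be repaired along the lines you sketch. The obstruction is structural: the $n$ scalar equations $\sum_{i=1}^n(-1)^{i+1}f_j(a_i)=f_j(0)/2$, $j=1,\dots,n$, are fully coupled — every coordinate equation involves \emph{all} of the unknowns $a_1,\dots,a_n$, so there is no triangular structure to exploit. After you fix $a_1$ by any condition involving only the first coordinate, the remaining subproblem on $[a_1,1]$ still carries $n$ constraints but only $n-1$ free points, so the $(n-1)$-dimensional inductive hypothesis simply does not apply; and the alternative of letting $a_1$ vary and applying a final one-dimensional IVT to the residual mismatch fails because the points $a_2(s),\dots,a_n(s)$ produced by the inductive hypothesis on $[s,1]$ are not unique and cannot in general be chosen continuously in $s$. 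A concrete illustration with $n=2$: for $f(t)=(\sin\pi t,\,1-t)$ the system forces $a_2-a_1=1/2$ and $\sin\pi a_1=\cos\pi a_1$, so the \emph{unique} solution is $(a_1,a_2)=(1/4,3/4)$, where $f_1(a_1)=\sqrt2/2\neq f_1(0)/2=0$; hence any scheme whose first step pins down $a_1$ by a condition on the first coordinate alone (such as ``the running alternating sum of the next coordinate hits the required half-value'') is incompatible with the actual solution — the correct $a_1$ is determined only by the coupled system.

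For comparison: the paper does not prove this lemma at all; it is quoted from Burago \cite{BU}, and the known argument is genuinely $n$-dimensional topology rather than iterated IVT. One realizes $S^n$ as $\{x\in\RR^{n+1}:\sum_i|x_i|=1\}$, associates to $x$ the partition $0=b_0\le b_1\le\dots\le b_{n+1}=1$ with $b_k=\sum_{i<k}|x_i|$, and considers the continuous odd map $x\mapsto\sum_{k}\operatorname{sign}(x_k)\bigl(f(b_{k+1})-f(b_k)\bigr)$; the Borsuk--Ulam theorem yields a zero, and since the unsigned total telescopes to $f(1)-f(0)=-f(0)$, the positive and negative blocks each carry $f(0)/2$, which after reading off the at most $n$ sign-change points $a_1\le\dots\le a_n$ is exactly \eqref{2Copies}. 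The one-dimensional IVT is the $n=1$ case of this, but it does not bootstrap to higher $n$; if you want a self-contained proof you should invoke Borsuk--Ulam (or an equivalent odd-degree argument on the simplex of ordered tuples), not an induction on coordinates.
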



The next lemma is straightforward.

\begin{lemma}\label{preimage}
For any $r>0$ there exists $R \ge 0$ such that, 
for every $x_1, x_2 \in X$ with $\vert F(x_1) - F(x_2) \vert \le r$, 
we have $d(x_1, x_2) \le R$. 
We will denote the minimum of such $R$ by $\Theta(r)$.
\end{lemma}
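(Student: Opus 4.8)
The plan is to prove Lemma~\ref{preimage} directly from the properness of the action and the continuity of $F$. I would argue as follows.

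First I would reduce the statement to a boundedness property of a single set. For $r > 0$, consider the set
\[
S_r := \{ g \in \Gamma : d\big( x_0, g(x_0) \big) \text{ could be compared to points at } F\text{-distance} \le r \}.
\]
More precisely, the key point is that the set $A_r := \{ x \in X : |F(x)| \le r \}$ is contained in a bounded neighborhood of the orbit $\Gamma(x_0)$, and within this set only finitely many group elements are ``relevant''. Concretely, I would first use the cocompactness of the action: there is a compact fundamental domain $D \subset X$ with $\Gamma \cdot D = X$, and we may assume $x_0 \in D$; let $\rho := \operatorname{diam}(D) < \infty$ by properness of $X$. Then every $x \in X$ satisfies $d(x, g(x_0)) \le \rho$ for some $g \in \Gamma$ (namely the $g$ with $x \in g(D)$, using $g(x_0) \in g(D)$).

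Next I would control which $g$ can occur for $x \in A_r$. Since $F$ is equivariant, $F(g(x_0)) = g \cdot F(x_0) = g \cdot 0 = g$ (identifying $\Gamma = \ZZ^n$ with its orbit under the shift action), and $F$ is continuous hence bounded on the compact set $D$, say $|F| \le M$ on $D$. If $x \in g(D)$ then $|F(x) - F(g(x_0))| = |F(x) - g| \le M$ by equivariance and the bound on $D$; combined with $|F(x)| \le r$ this gives $|g| \le r + M$ in $\RR^n$. Thus the set of relevant elements is $\{ g \in \ZZ^n : |g| \le r + M \}$, which is \emph{finite}. Therefore $A_r \subset \bigcup_{|g| \le r+M} g(D)$, a finite union of compact sets, hence itself contained in a ball of finite radius $R_0$ around $x_0$. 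Finally, for arbitrary $x_1, x_2 \in X$ with $|F(x_1) - F(x_2)| \le r$: by equivariance we may translate by the group element $g_2$ with $x_2 \in g_2(D)$, replacing $(x_1,x_2)$ by $(g_2^{-1}x_1, g_2^{-1}x_2)$, which does not change $d(x_1,x_2)$ nor $|F(x_1) - F(x_2)|$; now $g_2^{-1}x_2 \in D$ so $|F(g_2^{-1}x_2)| \le M$, hence $|F(g_2^{-1}x_1)| \le r + M$, so both points lie in $A_{r+M} \subset B_{R_0}(x_0)$, giving $d(x_1,x_2) \le 2R_0 =: R$. Taking $\Theta(r)$ to be the infimum of admissible $R$ completes the proof.

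I expect the only mild subtlety to be the bookkeeping with the equivariance identity $F(g(x_0)) = g$ and making sure the translation trick is applied correctly; these are routine once one fixes the compact fundamental domain and uses properness of $X$ (to get finite diameter of $D$) together with properness/discreteness of the action (to get that $\{|g| \le r+M\}$ is finite in $\ZZ^n$, which here is just finiteness of a lattice ball). There is no serious obstacle: this is genuinely a ``straightforward'' lemma as the authors indicate, and the argument is a direct compactness-plus-equivariance computation.
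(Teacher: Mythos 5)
Your proof is correct: the paper gives no argument for Lemma~\ref{preimage} at all (it is simply declared ``straightforward''), and your cocompactness-plus-equivariance argument --- bound $|F|$ by $M$ on a compact set $D$ with $\Gamma D = X$, translate both points by $g_2^{-1}$ so one lands in $D$, and observe that $\{x : |F(x)| \le r+M\}$ is covered by the finitely many translates $g(D)$ with $|\tilde g| \le r+2M$, hence bounded --- is exactly the routine proof the authors intend. Two cosmetic remarks only: the set $S_r$ you introduce at the start is never used, and the finiteness of $\operatorname{diam}(D)$ comes from compactness of $D$ rather than properness of $X$; neither affects correctness.
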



Fix $g \in \GG$.
The plan of the proof is as follows.
We will find a path $\JJ:[0, 1] \longrightarrow X$
connecting $g^2(x_0)$ and $x_0$ satisfying the following.

\begin{enumerate}
\item{$\JJ$ is an almost shortest path in the sense that 
\begin{equation}
L(\JJ) \le d \big( x_0, g^2(x_0) \big) + 2n\TTT(\sqrt{n}). \label{preEq1}
\end{equation}
}
\item{$\JJ$ is almost containing two copies of a shortest path between $g(x_0)$ and $x_0$
in a certain sense, and we have
\begin{equation}
L(\JJ) \ge 2d\big( x_0,g(x_0) \big) - 2\TTT(n \sqrt{n}). \label{preEq2}
\end{equation}
}
\end{enumerate}
Notice that Lemma~\ref{Keylemma} readily follows from (\ref{preEq1}) and (\ref{preEq2}).

\textit{Step~$1:$ The construction of $\JJ$.} 
Let $\gamma:[0,1] \longrightarrow X$ be a minimizing geodesic from $g^2(x_0)$ to $x_0$.
We apply Lemma~\ref{intermed} to the map $\wt\gamma=F\circ\gamma$ 
(recall $F(x_0)=0$) and obtain 
$0 = a_0 \le a_1 \le a_2 \le \dots \le a_n \le a_{n+1} = 1$ such that

\begin{equation}
\sum_{\substack{0 \le i \le n, \\ i \equiv 1 \bmod 2}} \big( \wt\gg(a_i) - \wt\gg(a_{i+1}) \big) = 
\sum_{\substack{0 \le i \le n, \\ i \equiv 0 \bmod 2}} \big( \wt\gg(a_i) - \wt\gg(a_{i+1}) \big) = 
\frac{\wt\gg(0)}{2}.\label{halb}\end{equation}


Let $[\cdot]:\RR^n \longrightarrow \ZZ^n$ be the coordinate floor function,
namely $[x]_j \le x_j < [x]_j +1$.
We fix some $p_i \in F^{-1}([\wt\gamma(a_i)])$ for $1 \le i \le n$,
and also set $p_0 := g^2(x_0)$ and $p_{n+1} := x_0$.
Now we define $\Pi:[0,1] \longrightarrow X$ as the concatenation of the shortest paths $p_ip_{i+1}$, 
where $\Pi(a_i)=p_i$.

\textit{Step~$2:$ The proof of \eqref{preEq1}.} 
We deduce from $\vert [\wt\gamma(a_i)] - \wt\gamma(a_i) \vert \le \sqrt{n}$
and Lemma~\ref{preimage} that $d(p_i, \gamma(a_i)) \le \TTT(\sqrt{n})$. 
Then the triangle inequality implies \eqref{preEq1} as
\begin{align*}
L(\Pi) &= \sum_{i=0}^n d(p_i,p_{i+1})
 \le \sum_{i=0}^n d\big( \gamma(a_i),\gamma(a_{i+1}) \big) + 2n \TTT(\sqrt{n}) \\
&= d \big( x_0, g^2(x_0) \big) + 2n \TTT(\sqrt{n}),
\end{align*}
where we have $2n$ instead of $2(n+1)$ since $p_0=g^2(x_0)=\gamma(0)$
and $p_{n+1}=x_0=\gamma(1)$.

\textit{Step~$3:$ The proof of \eqref{preEq2}.} 
For $0 \le i \le n$, we denote by $g_i$ an element of $\GG$ such that $g_i(p_{i + 1}) = p_{i}$. 
Then we can write 
\[ L(\Pi) = \sum_{i=0}^n d(p_i, p_{i+1})
 = \sum_{i=0}^n d\big( g_i(p_{i+1}), p_{i+1} \big). \]
Notice that, if $x$ and $y$ are in the same orbit of an isometric action of an abelian group,
then for any element $\aa$ of this group we have $d(x, \aa(x)) = d(y, \aa(y))$.
Thus we observe
\begin{align*}
L(\Pi) &= \sum_{\substack{0 \le i \le n, \\ i \equiv 0 \bmod 2}} d\big( g_i(p_{i+1}), p_{i+1} \big)
 + \sum_{\substack{0 \le i \le n, \\ i \equiv 1 \bmod 2}} d\big( g_i(p_{i+1}), p_{i+1} \big) \\
&= \sum_{\substack{0 \le i \le n, \\ i \equiv 0 \bmod 2}}
 d\bigg(g_i \Big(  \Big(\prod_{\substack{i < k \le n, \\ k \equiv 0 \bmod 2}}g_k\Big)(x_0) \Big), 
 \Big(\prod_{\substack{i < k \le n, \\ k \equiv 0 \bmod 2}}g_k\Big)(x_0)\bigg) \\
&\quad + \sum_{\substack{0 \le i \le n, \\ i \equiv 1 \bmod 2}}
 d\bigg(g_i \Big(  \Big(\prod_{\substack{i < k \le n, \\ k \equiv 1 \bmod 2}}g_k\Big)(x_0) \Big), 
 \Big(\prod_{\substack{i < k \le n, \\ k \equiv 1 \bmod 2}}g_k\Big)(x_0)\bigg).
\end{align*}
Applying the triangle inequality to each of those sums provides 
\[ L(\Pi) \ge
 d\bigg( \Big(\prod_{\substack{0 \le k \le n, \\ k \equiv 0 \bmod 2}}g_k\Big)(x_0), x_0 \bigg) +
d\bigg( \Big(\prod_{\substack{0 \le k \le n, \\ k \equiv 1 \bmod 2}}g_k\Big)(x_0), x_0 \bigg). \]
Now, again by the triangle inequality,
to provide \eqref{preEq2} it suffices to show that for $m = 1,2$
\[ d\bigg( \Big(\prod_{\substack{0 \le k \le n, \\ k \equiv m \bmod 2}}g_k\Big)(x_0), g(x_0) \bigg)
 \le \TTT(n \sqrt{n}). \]
Thanks to Lemma~\ref{preimage}, the above inequality follows from
\begin{equation}\label{F-F}
\bigg \vert F\bigg(\Big(\prod_{\substack{0 \le k \le n, \\ k \equiv m \bmod 2}}g_k\Big)(x_0)\bigg)
 -  F\big( g(x_0) \big) \bigg \vert \le n \sqrt{n}.
\end{equation}
Let $\wt g_k \in \ZZ^n$ (resp.\ $\wt g$) the element corresponding to $g_k$ (resp.\ $g$).
Then, since $F$ is $\Gamma$-equivariant, we have
\[ F\bigg(\Big(\prod_{\substack{0 \le k \le n, \\ k \equiv m \bmod 2}}g_k\Big)(x_0)\bigg)
 -  F\big( g(x_0) \big)
 =\sum_{\substack{0 \le k \le n, \\ k \equiv m \bmod 2}} \wt g_k -\wt g. \]
Therefore \eqref{F-F} is rewritten as
\[ \bigg \vert \sum_{\substack{0 \le k \le n, \\ k \equiv m \bmod 2}} \big( [\wt\gg(a_{k})] - [\wt\gg(a_{k+1})] \big)
 - \frac{\wt\gg(0)}{2} \bigg \vert \le n \sqrt{n}, \]
which follows from \eqref{halb}, $\vert [\wt\gamma(a_i)] - \wt\gamma(a_i) \vert \le \sqrt{n}$
for $1 \le i \le n$ and $\wt\gamma(a_i)=[\wt\gamma(a_i)]$ for $i=0,n+1$.
This completes the proof of Lemma~\ref{Keylemma} and then Theorem~\ref{group-norm}.
\end{proof}

Applying Theorem~\ref{group-norm} to the setting of Proposition~\ref{virt-abel}
yields the following.

\begin{corollary}\label{corGHCayley}
Let $G$ be a finitely generated
  virtually abelian  group with a given symmetric set
  of generators and $(K,\rho)$ be the Cayley graph of
  $G$ with respect to this set of generators.
  Then there exists a norm $\Vert \cdot \Vert$ on $\RR^n$
such that the Gromov--Hausdorff distance between $(K,\rho)$ and $(\RR^n,\Vert \cdot \Vert)$
is finite.
\end{corollary}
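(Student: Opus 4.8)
The plan is to deduce this from Theorem~\ref{group-norm} applied with $X=K$ and $\Gamma=\ZZ^n$ realized as a finite-index subgroup of $G$, so that the whole task reduces to verifying the hypotheses of that theorem. First I would invoke virtual abelianness: $G$ contains a finite-index abelian subgroup, which is finitely generated (Schreier), hence splits as $\ZZ^n\oplus T$ with $T$ finite, and the free part $\ZZ^n\oplus 0$ still has finite index in $G$. If $n=0$ then $G$ is finite, $K$ is a compact metric space, and the claim is trivial (take the trivial norm on $\RR^0=\{0\}$); so from now on assume $n\ge 1$ and fix an injective homomorphism $\iota\colon\ZZ^n\hookrightarrow G$ whose image has finite index.

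Next I would record the geometry in play. As the Cayley graph of a finitely generated group with respect to a finite symmetric generating set, $(K,\rho)$ is a connected, locally finite graph with unit-length edges, hence a proper geodesic metric space. The group $G$ acts on $K$ on the left by graph automorphisms, and these are isometries of $\rho$; this action is simply transitive on the vertex set, in particular properly discontinuous and cocompact. Restricting along $\iota$ yields an isometric, properly discontinuous action of $\Gamma=\ZZ^n$ on $K$, and it is still cocompact: writing $G=\bigsqcup_{j=1}^m \iota(\ZZ^n)\,g_j$ as a finite union of right cosets and letting $D$ be a compact set with $G\cdot D=K$, the compact set $E:=\bigcup_{j=1}^m g_j\cdot D$ satisfies $\bigcup_{\gamma\in\Gamma}\gamma\cdot E=K$.

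It remains to construct a continuous $\Gamma$-equivariant map $F\colon K\longrightarrow\RR^n$. Using the decomposition $G=\bigsqcup_{j=1}^m \iota(\ZZ^n)\,g_j$, write each vertex $v\in G$ uniquely as $v=\iota(a)\,g_j$ with $a\in\ZZ^n$, and set $F(v):=a\in\ZZ^n\subset\RR^n$. For $h\in\ZZ^n$ one has $\iota(h)\cdot v=\iota(h+a)\,g_j$, hence $F(\iota(h)\cdot v)=h+F(v)$; thus $F$ intertwines the $\Gamma$-action on vertices with the shift action of $\ZZ^n$ on $\RR^n$. I would then extend $F$ over each edge of $K$ by affine interpolation between the values at its two endpoints: this is well defined and continuous, and since $\Gamma$ permutes the edges of $K$ compatibly with the vertex action, the extended $F$ remains equivariant. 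Note that $F$ need be neither proper nor surjective, but Theorem~\ref{group-norm} does not require this. Applying Theorem~\ref{group-norm} now produces a norm $\Vert\cdot\Vert$ on $\RR^n$ at finite Gromov--Hausdorff distance from $(K,\rho)$, which is the assertion.

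I do not expect a serious obstacle: all the analytic content sits in Theorem~\ref{group-norm}, and Corollary~\ref{corGHCayley} is essentially bookkeeping. The one place warranting care is the descent from $G$ to the finite-index subgroup $\ZZ^n$ — one must check simultaneously that $\ZZ^n$ still acts cocompactly on $K$ (immediate from finiteness of the index, as above) and that the vertex map $F$, defined through a choice of right-coset representatives, admits a continuous and $\ZZ^n$-equivariant extension across the edges of $K$. Both points are routine once organized in this way.
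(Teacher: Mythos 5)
Your proposal is correct and takes essentially the same route as the paper: pass to a finite-index copy of $\ZZ^n$ in $G$, observe that its left action on the Cayley graph is isometric, properly discontinuous and cocompact, build the $\ZZ^n$-equivariant map $F$ from a right-coset decomposition and extend it affinely over the edges, then invoke Theorem~\ref{group-norm}. The only cosmetic difference is that you send all coset representatives to $0\in\RR^n$, whereas the paper maps them to arbitrary points before translating by the lattice, which changes nothing.
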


\begin{proof}
Let 
 $\Gamma\sim\ZZ^n$ be a subgroup of finite index in $G$
 (we have $n<\infty$ because any subgroup of finite index
 in a finitely generated group is finitely generated).
 Then $\Gamma$ acts on $K$ cocompactly and properly
 discontinuously. Let us construct a map $F:K \longrightarrow \RR^n$
 satisfying the conditions of
Theorem~\ref{group-norm} (without using the Bieberbach theorem). 
Let $x_0\in K$ correspond to $e\in G$.
We fix an isomorphism $g \longmapsto \wt g$ between
 $\Gamma$ and $\ZZ^n$ and map the orbit $\Gamma(x_0)$
 to the integer lattice via this isomorphism. 
 Then choose one representative from every right coset with respect to $\Gamma$,
 say $g_1,\dots, g_k$, and map 
$g_1(x_0),\dots, g_k(x_0)$ to $\RR^n$ in an arbitrary way.
Then define the map on $G(x_0)$ by 
$F(hg_i(x_0)):=F(g_i(x_0)) +\wt h$ for $h \in \Gamma$.
Finally extend $F$ linearly to the edges.
Then we apply Theorem~\ref{group-norm}
to obtain that $K$ is of finite distance from some 
$n$-dimensional normed space.
\end{proof}

\subsubsection{Proof of Proposition~$\ref{virt-abel}$}

For $a,b,c >0$ satisfying the triangle inequalities
\begin{equation}\label{abc}
a \le b+c, \qquad b \le c+a, \qquad c \le a+b,
\end{equation}
let us denote by $\angle(a,b; c)$
the angle between $a$ and $b$ for the flat triangle with edges $a, b, c$,
namely $c^2=a^2 +b^2 -2ab\cos\angle(a,b;c)$.
By an elementary calculation we observe the following.

\begin{lemma}\label{tre}
For every $d,\delta>0$, there exists $D=D(d,\delta)>0$ such that, 
for any triples $a,b,c>0$ and $a', b', c'>0$ satisfying \eqref{abc},
$a,b>D$, $c>0$, $|a-a'| \le d$, $|b-b'| \le d$ and $|c-c'| \le d$,
we have $|\angle(a',b'; c')-\angle(a,b; c)|\le\delta$.
\end{lemma}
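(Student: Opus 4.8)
The plan is to reduce Lemma~\ref{tre} to the uniform continuity of $\arccos$ on the compact interval $[-1,1]$. By the definition of $\angle(a,b;c)$ through the flat law of cosines we have $\cos\angle(a,b;c)=(a^2+b^2-c^2)/(2ab)$, and similarly for the primed triple; so it suffices to produce, for given $d,\delta>0$, a threshold $D=D(d,\delta)$ and a modulus $\eta=\eta(\delta)>0$ such that (i) $D\ge D(d,\delta)$ forces $\bigl|\cos\angle(a',b';c')-\cos\angle(a,b;c)\bigr|\le\eta$ uniformly over all admissible triples, while (ii) $|\xi-\xi'|\le\eta$ with $\xi,\xi'\in[-1,1]$ forces $|\arccos\xi-\arccos\xi'|\le\delta$. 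Since $\cos\angle(a,b;c)$ and $\cos\angle(a',b';c')$ lie in $[-1,1]$ by \eqref{abc}, chaining (i) and (ii) gives the lemma.

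For step (i), I would write $a'=a+s$, $b'=b+t$, $c'=c+u$ with $|s|,|t|,|u|\le d$ and expand
\[ \cos\angle(a',b';c') = \frac{(a^2+b^2-c^2)+2(as+bt-cu)+(s^2+t^2-u^2)}{2ab}\cdot\Bigl(1+\frac{s}{a}+\frac{t}{b}+\frac{st}{ab}\Bigr)^{-1}. \]
Every correction term is $O(d/D)$ uniformly over admissible triples: the ratios $s/b$, $t/a$ because $a,b>D$; the ratio $cu/(ab)$ because the triangle inequality $c\le a+b$ from \eqref{abc} yields $c/(ab)\le 1/a+1/b<2/D$; and the quadratic terms are $O(d^2/D^2)$. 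Likewise the bracketed factor equals $1+O(d/D)$ and is bounded away from $0$ once, say, $D\ge 4d$; multiplying it against the leading quotient, whose value is $\cos\angle(a,b;c)$ of modulus $\le 1$, costs only a further $O(d/D)$. Collecting everything yields $\bigl|\cos\angle(a',b';c')-\cos\angle(a,b;c)\bigr|\le C(d)/D$ for an explicit constant $C(d)$ depending only on $d$. Step (ii) is immediate, since a continuous function on a compact interval is uniformly continuous; this furnishes $\eta(\delta)$. We then set $D(d,\delta):=\max\{4d,\,C(d)/\eta(\delta)\}$.

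The step most likely to look like an obstacle is (ii): $\angle(a,b;c)$ may be close to $0$ or $\pi$, where $\arccos$ has unbounded derivative, so one cannot pass from a cosine estimate to an angle estimate via a Lipschitz bound. This is, however, a non-issue — only \emph{uniform continuity} of $\arccos$ on all of $[-1,1]$ is used, not a Lipschitz bound. The only place genuine care is needed is the a priori unbounded third side $c$ in step (i), which is precisely handled by the triangle inequality $c\le a+b$ noted above; one could alternatively bound $c$ via $c\le c'+d\le a'+b'+d\le a+b+3d$ using \eqref{abc} for the primed triple.
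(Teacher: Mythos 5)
Your proof is correct: the cosine-perturbation estimate with $c\le a+b$ controlling the third side, followed by uniform continuity (not Lipschitz continuity) of $\arccos$ on $[-1,1]$, gives exactly the stated conclusion, and you rightly flag that the possible blow-up of the derivative of $\arccos$ near $\pm 1$ is harmless. The paper offers no proof at all — Lemma~\ref{tre} is asserted as following ``by an elementary calculation'' — so your argument is simply a correct filling-in of that omitted elementary calculation, in the way the authors evidently intended.
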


The next lemma is crucial in the proof of Proposition~\ref{virt-abel}.

\begin{lemma}\label{loc-glob}
Let $(X,d_X)$ be a metric space and $\ee>0$ such that, for every $R>0$,
there is $L_X=L_X(R)>0$ for which every point $p \in X$ satisfies $\ATB(\ee)$
with constants $L_X$ and $R$.
Suppose further that $X$ is of finite Gromov--Hausdorff distance from
some metric space $(Y,d_Y)$ which is globally $\ATB(\ee/2)$.
Then $X$ is globally $\ATB(\ee)$.
\end{lemma}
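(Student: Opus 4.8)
The plan is to run a pigeonhole dichotomy at each point $p\in X$, glued to the stability of comparison angles under bounded perturbations of the side lengths, which is exactly Lemma~\ref{tre}. Either $p$ has many of the given test points at bounded distance, in which case the quantitative local hypothesis on $X$ finishes at once; or $p$ has many test points \emph{far away}, in which case we push the question to $Y$, apply the global $\ATB(\ee/2)$-condition there, and pull the resulting angle estimate back to $X$ with Lemma~\ref{tre}, losing only the factor-two slack in the angle.

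First I would record the data. Finiteness of the Gromov--Hausdorff distance gives $\varepsilon_0<\infty$ and a (possibly discontinuous) map $f\colon X\longrightarrow Y$ with $\bigl|d_Y(f(x),f(x'))-d_X(x,x')\bigr|\le\varepsilon_0$ for all $x,x'\in X$. Let $L_Y$ witness that $Y$ is globally $\ATB(\ee/2)$, apply Lemma~\ref{tre} with $d=\varepsilon_0$ and $\delta=\ee/2$ to obtain $D$, and fix a radius $R_0$ so large that $R_0>D$ and $1-2\varepsilon_0^2/R_0^2>\cos\ee$. Set $L:=L_X(R_0)+L_Y$, where $L_X(R_0)$ comes from the hypothesis on $X$. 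Since $L$ is independent of $p$, it suffices to check the $\ATB(\ee)$-condition with constants $L$ and $R=\infty$ at an arbitrary $p\in X$.

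So fix $p\in X$ and $y_1,\dots,y_L\in X\setminus\{p\}$. If at least $L_X(R_0)$ of the $y_i$ lie in $B_{R_0}(p)\setminus\{p\}$, the local $\ATB(\ee)$-condition at $p$ produces two of them with comparison angle $<\ee$ and we are done. Otherwise at least $L_Y$ of the $y_i$ lie at distance $\ge R_0$ from $p$; writing $q:=f(p)$, their images lie in $Y\setminus\{q\}$ since $d_Y(q,f(y_i))\ge R_0-\varepsilon_0>0$, so the global $\ATB(\ee/2)$-condition at $q$ yields $i\ne j$ with $\wt\angle f(y_i)\,q\,f(y_j)<\ee/2$. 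For this pair, if $d_X(y_i,y_j)\le 2\varepsilon_0$ then $\cos\wt\angle y_i p y_j\ge 1-2\varepsilon_0^2/R_0^2>\cos\ee$ by the cosine formula \eqref{eq:cosine} and the choice of $R_0$; if $d_X(y_i,y_j)>2\varepsilon_0$ then $d_Y(f(y_i),f(y_j))>\varepsilon_0>0$, and Lemma~\ref{tre} applied to the side-length triples $(d_X(p,y_i),d_X(p,y_j),d_X(y_i,y_j))$ and $(d_Y(q,f(y_i)),d_Y(q,f(y_j)),d_Y(f(y_i),f(y_j)))$ --- which satisfy \eqref{abc}, differ coordinatewise by at most $\varepsilon_0$, and whose first triple has $d_X(p,y_i),d_X(p,y_j)>D$ --- gives $|\wt\angle y_i p y_j-\wt\angle f(y_i)\,q\,f(y_j)|\le\ee/2$. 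Either way $\wt\angle y_i p y_j<\ee$, as required.

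I expect the only delicate point to be this last step: Lemma~\ref{tre} is stated for nondegenerate triangles, which is why the argument splits at $d_X(y_i,y_j)\le 2\varepsilon_0$ (handled by hand, using that the two sides at $p$ are long) versus $d_X(y_i,y_j)>2\varepsilon_0$ (where the distortion bound forces the third side of the $Y$-triangle to be positive, so that Lemma~\ref{tre} applies). Choosing $R_0$ large enough for the degenerate subcase is the last thing to verify; the remainder is a routine pigeonhole count and the distortion estimate defining finite Gromov--Hausdorff distance.
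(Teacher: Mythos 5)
Your proof is correct and follows essentially the same route as the paper's: a pigeonhole split of the test points into those within a bounded radius (handled by the quantitative local hypothesis on $X$) and those far from $p$ (handled by transferring the configuration to $Y$ through a Gromov--Hausdorff approximation and invoking Lemma~\ref{tre} with $\delta=\ee/2$), with the constant $L_X+L_Y$ up to trivial bookkeeping. The only differences are cosmetic: the paper argues contrapositively (pushing large angles from $X$ into $Y$), while you pull small angles back from $Y$ and therefore add an explicit, correct treatment of the nearly degenerate pair via the cosine formula, a case the paper's direction avoids implicitly.
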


\begin{proof}
Denote the Gromov--Hausdorff distance between $X$ and $Y$ by $d/2$,
and take $D=D(d,\ee/2)$ from Lemma~\ref{tre}.
Let then $L_X=L_X(D)$ be the constant provided by the hypothesis on $X$,
and $Y$ be globally $\ATB(\ee/2)$ with $L_Y$.
We shall show that $X$ is globally $\ATB(\ee)$ with the constant $L_X+L_Y-1$.
To this end, given $p\in X$,
let $y_1, y_2,\dots, y_N\in X\setminus\{p\}$ satisfy $\wt\angle y_ipy_j\ge\ee$ for all $i\neq j$.
Then, on the one hand,
we immediately find $\#[\{y_1, y_2,\dots, y_N \} \cap B_D(p)] <L_X$.
On the other hand, we obtain
$\#[\{y_1, y_2,\dots, y_N \} \setminus B_D(p)] <L_Y$ 
since, otherwise, the choice of $d$ and Lemma~\ref{tre} provide
a point $p'\in Y$ and $L_Y$ points $y'_i \in Y \setminus \{p'\}$
such that $\wt\angle y'_i p' y'_j\ge\ee/2$ for all $i \neq j$
and we have a contradiction with the choice of $L_Y$.
Therefore $N \le L_X +L_Y -2$ and this completes the proof.
\end{proof}

Now we are ready to prove Proposition~\ref{virt-abel}.
Note that the Cayley graph $K$ satisfies the first condition of Lemma~\ref{loc-glob}.
Then Corollary~\ref{corGHCayley} and Lemma~\ref{pr:norm} ensure that
Lemma~\ref{loc-glob} is indeed available with $Y$ a finite-dimensional normed space,
and we obtain Proposition~\ref{virt-abel}.

\subsection{Submetries and Quotient spaces}\label{submetry}

We finally discuss the behavior of the $\ATB$-condition
under some deformations of metric spaces, related to Lemma~\ref{LiftLemma}.

\begin{definition}[Submetries]
Let $(X,d_X)$ and $(Y,d_Y)$ be metric spaces.
A map $\psi: X \longrightarrow Y$ is called a \emph{submetry} if, for every $x \in X$ and $r > 0$,
the image of the closed ball $\overline{B_r(x)}$ coincides with $\overline{B_r(\psi(x))}$.
\end{definition}

Notice that every submetry is $1$-Lipschitz.

\begin{proposition}\label{SubATB}
Let $\psi:X \longrightarrow Y$ be a submetry and $0 < \ee < \pi/2$.
Suppose that $p \in X$ satisfies the $\ATB(\ee)$-condition for some $L \in \N$ and $R > 0$.
Then $q := \psi(p) \in Y$ also satisfies the $\ATB(\ee)$-condition for $L$ and $R$.  
\end{proposition}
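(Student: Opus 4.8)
The plan is to reduce the statement to the ``simple version'' of Lemma~\ref{LiftLemma}, i.e.\ part~(\ref{Simp}), by producing a suitable (not necessarily continuous) section of $\psi$ over the ball $B_R(q)$.

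First I would construct a map $\Phi: B_R(q) \longrightarrow X$ with $\Phi(q) = p$ such that $\psi \circ \Phi = \id_{B_R(q)}$ and $d_X(p, \Phi(y)) = d_Y(q, y)$ for every $y \in B_R(q)$. This is precisely where the submetry hypothesis enters: for $y \in B_R(q)$ put $r := d_Y(q, y) < R$; then $y \in \overline{B_r(q)} = \psi(\overline{B_r(p)})$, so there is some $x \in \overline{B_r(p)}$ with $\psi(x) = y$. Since submetries are $1$-Lipschitz, $d_X(p, x) \ge d_Y(\psi(p),\psi(x)) = d_Y(q, y) = r$, which forces $d_X(p, x) = r$; choosing one such $x$ for each $y$, and $\Phi(q) := p$ (which is anyway forced by $d_X(p,\Phi(q)) = 0$), defines $\Phi$.

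Next I would verify that $\Phi$ satisfies conditions (\ref{SimpRad}) and (\ref{SimpAll}) of Lemma~\ref{LiftLemma}(\ref{Simp}) for the constants $L$ and $R$. Condition (\ref{SimpRad}), $d_X(p, \Phi(y)) = d_Y(q, y)$, holds by construction. Condition (\ref{SimpAll}), $d_X(\Phi(y), \Phi(y')) \ge d_Y(y, y')$, comes for free from $\psi$ being $1$-Lipschitz together with $\psi(\Phi(y)) = y$ and $\psi(\Phi(y')) = y'$: indeed $d_Y(y, y') = d_Y\big(\psi(\Phi(y)), \psi(\Phi(y'))\big) \le d_X\big(\Phi(y), \Phi(y')\big)$. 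Then Lemma~\ref{LiftLemma}(\ref{Simp}) applies verbatim and yields that $q$ satisfies the $\ATB(\ee)$-condition for the same $L$ and $R$.

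I do not expect a genuine obstacle; the only point requiring care is that $\Phi$ is neither continuous nor canonical, but Lemma~\ref{LiftLemma}(\ref{Simp}) makes no such demand, so an arbitrary choice-function section at the correct radius suffices. Alternatively one can bypass $\Phi$ altogether: given $y_1,\dots,y_L \in B_R(q)\setminus\{q\}$, pick preimages $x_i \in X$ with $\psi(x_i)=y_i$ and $d_X(p,x_i) = d_Y(q,y_i) \in (0,R)$ as above, apply the $\ATB(\ee)$-condition at $p$ to obtain $i \neq j$ with $\wt\angle x_i p x_j < \ee$, and conclude $\wt\angle y_i q y_j \le \wt\angle x_i p x_j < \ee$ from the cosine formula \eqref{eq:cosine}, using $d_X(p,x_i)=d_Y(q,y_i)$, $d_X(p,x_j)=d_Y(q,y_j)$, and $d_X(x_i,x_j) \ge d_Y(y_i,y_j)$ (the comparison angle being monotone non-increasing in the length of the opposite side).
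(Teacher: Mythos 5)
Your proposal is correct and follows essentially the same route as the paper: the submetry property gives, for each $y$, a preimage lying on the sphere $S_{d_Y(q,y)}(p)$, an arbitrary choice of such preimages defines $\Phi$ with $\Phi(q)=p$, and $1$-Lipschitzness of $\psi$ yields condition (\ref{SimpAll}), so Lemma~\ref{LiftLemma}(\ref{Simp}) applies. (Only a cosmetic slip in your alternative remark: with the adjacent sides fixed, the comparison angle is non-\emph{decreasing} in the opposite side length, which is exactly what gives $\wt\angle y_i q y_j \le \wt\angle x_i p x_j$.)
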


\begin{proof}


It follows from the definition of the submetry that, for every $y \in Y$,
we have $S_{d_Y(q, y)}(p) \cap \psi^{-1}(y) \neq \emptyset$.
Thus we have (by the axiom of choice) $\Phi:Y \longrightarrow X$ such that 
$\Phi(y) \in S_{d_Y(q, y)}(p) \cap \psi^{-1}(y)$ and $\Phi(q)=p$. 
Since $y=\psi(\Phi(y))$ and $\psi$ is $1$-Lipschitz,
we conclude that $\Phi$ satisfies the conditions of Lemma~\ref{LiftLemma}(\ref{Simp}).
This completes the proof.
 \end{proof}

For a metric space $(X,d_X)$ with an isometric group action $G \curvearrowright X$ with closed orbits,
the quotient metric space $(X/G,d_{X/G})$ is defined in the following way.
Denoted by $X/G$ is the set of orbits, and the metric is given by 
\[ d_{X/G}(\wb x, \wb y) := \inf_{x \in \wb x,\, y \in \wb y} d_X(x,y). \]
Clearly, the quotient map $\pi: X \longrightarrow X/G$ is a submetry. 
Thus, we have the following corollary of Proposition~\ref{SubATB}. 

\begin{corollary}[$\ATB$ of Quotient spaces]
Let $(X,d_X)$ satisfy the local, semi-global, or global $\ATB(\ee)$-condition
and $G \curvearrowright X$ be an isometric group action with closed orbits.
Then $X/G$ also satisfies the local, semi-global, or global $\ATB(\ee)$-condition, respectively.
\end{corollary}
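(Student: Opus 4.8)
The plan is to deduce everything from Proposition~\ref{SubATB} together with the observation recorded just before the statement, namely that the quotient map $\pi:X \longrightarrow X/G$ is a submetry, hence a surjective $1$-Lipschitz map. Since Proposition~\ref{SubATB} transfers the $\ATB(\ee)$-condition from a point $p \in X$ to its image $\pi(p)$ with \emph{the same} constants $L$ and $R$, the whole argument will amount to deciding, in each of the three regimes, which point of $X$ to lift a given point of $X/G$ to, and checking that the relevant ball in $X/G$ around some $\overline{x}$ is covered by the $\pi$-image of a ball in $X$ around a chosen lift $x \in \pi^{-1}(\overline{x})$.

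First I would record the elementary fact that $\pi(B_R(x)) = B_R(\pi(x))$ for every $x \in X$ and $R>0$: the inclusion $\subseteq$ is $1$-Lipschitzness of $\pi$, and for $\supseteq$, given $\overline{q}$ with $d_{X/G}(\pi(x),\overline{q}) = s < R$ one uses $\overline{q} \in \overline{B_s(\pi(x))} = \pi(\overline{B_s(x)})$ from the definition of a submetry to produce a preimage $q \in \overline{B_s(x)} \subseteq B_R(x)$. In particular every point of a ball $B_R(\overline{x}) \subseteq X/G$ admits a preimage under $\pi$ lying in $B_R(x)$, for any fixed lift $x$ of $\overline{x}$.

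Then I would run the three cases. In the global case, given $\overline{q} \in X/G$, pick any $q \in \pi^{-1}(\overline{q})$; it satisfies $\ATB(\ee)$ with the uniform constant $L$ and $R=\infty$, so $\overline{q}=\pi(q)$ does too by Proposition~\ref{SubATB}, with the same $L$. In the semi-global case, fix $\overline{x}\in X/G$ and $R>0$, choose a lift $x$, and apply the semi-global hypothesis on $X$ at $x$ with this $R$ to obtain $L$ such that every $p \in B_R(x)$ satisfies $\ATB(\ee)$ with $L$ and $R$; any $\overline{p} \in B_R(\overline{x})$ then lifts to such a $p$, and Proposition~\ref{SubATB} yields $\ATB(\ee)$ for $\overline{p}$ with $L$ and $R$. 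The local case is identical, except that $R$ is now produced by the hypothesis on $X$ rather than prescribed in advance.

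I do not anticipate a genuine obstacle here: the only point requiring a little care is the bookkeeping of the radius $R$, i.e.\ making sure that the ball in $X/G$ whose points one wants to control is contained in the $\pi$-image of a ball in $X$ on which uniform $\ATB(\ee)$ is already available, and this is precisely what the ball-onto-ball property of submetries supplies.
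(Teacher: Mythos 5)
Your argument is correct and is exactly the paper's route: the paper simply observes that the quotient map $\pi:X\longrightarrow X/G$ is a submetry and declares the corollary an immediate consequence of Proposition~\ref{SubATB}. Your write-up merely makes explicit the lift-and-ball bookkeeping (that $\pi$ maps $B_R(x)$ onto $B_R(\pi(x))$, so points of a ball in $X/G$ lift into the corresponding ball in $X$ where the hypothesis applies with the same $L$ and $R$), which is precisely what the paper leaves implicit.
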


\section{Gradient curves of quasi-convex functions are self-contracted}\label{QCtoSC}

As we mentioned in the introduction,
self-contracted curves arise as gradient curves of quasi-convex functions.

\begin{definition}[Quasi-convexity]\label{quasi-convex}
A function $f:X \longrightarrow \R$ on a geodesic metric space $(X,d)$
is said to be \emph{quasi-convex} if,
for any $x,y \in X$ and any minimizing geodesic $\gg: [0,1] \longrightarrow X$
from $x$ to $y$, we have for all $t \in (0,1)$
\[ f\big( \gg(t) \big) \le \max\{f(x),f(y)\}. \]
\end{definition}

An equivalent definition is that every sub-level set $\{ x \in X \,|\, f(x) \le a \}$ is convex.
It was known that gradient curves of quasi-convex functions are self-contracted in
\begin{itemize}
\item{Euclidean spaces (see \cite[Proposition 6.2]{DLS}),}
\item{$\CAT(0)$-spaces (see \cite[Proposition 4.6]{OhRect}).}
\end{itemize}
In the latter case, to be precise, we assumed an auxiliary condition that
$f$ is either bounded below or \emph{$\lambda$-convex} for some $\lambda \in \R$ in the sense that
\[ f\big( \gamma(t) \big) \le (1-t)f(x) +tf(y) -\frac{\lambda}{2}(1-t)t d(x,y)^2 \]
for all $t \in (0,1)$ along every geodesic $\gamma:[0,1] \longrightarrow X$ from $x$ to $y$.

In this section we shall add spaces with lower curvature bound into the above list.
Our proof is based on the evolution variational inequality
(in the same spirit as \cite{OhRect})
and applies to other cases where the evolution variational inequality is known.
We remark that normed spaces and Finsler manifolds do not belong to this class
and the self-contractedness may fail
(see \cite{OS} for the failure of the contraction property
$d(\gg(t),\eta(t)) \le e^{-\lambda t}d(\gg(0),\eta(0))$ for pairs of gradient curves).

Given a function $f:X \longrightarrow \R$ on a metric space $(X,d)$,
an absolutely continuous curve $\gamma:[0,\infty) \longrightarrow X$ is said to satisfy
the \emph{$\lambda$-evolution variational inequality} ($\EVI_{\lambda}$ for short) if,
for any $y \in X$ and almost every $t>0$,
\begin{equation}\label{eq:EVI}
\frac{d}{dt} \bigg[ \frac{1}{2}d\big( \gamma(t),y \big)^2 \bigg]
 +\lambda d\big( \gamma(t),y \big)^2 +f\big( \gamma(t) \big)
 \le f(y).
\end{equation}
The existence of curves satisfying the EVI${}_{\lambda}$
characterizes the $\lambda$-convexity of $f$,
and then those curves turn out gradient curves of $f$ in the metric sense
(called the \emph{energy dissipation}) that means
\[ f\big( \gamma(t) \big) =f\big( \gamma(s) \big)
 -\frac{1}{2} \int_s^t \big\{ |\dot{\gamma}(r)|^2 +|\nabla f|\big( \gamma(r) \big)^2 \big\} \,dr \]
for all $s<t$, where $|\dot{\gamma}|$ is the metric speed of $\gamma$
and $|\nabla f|$ is the (descending) slope of $f$.
The inequality \eqref{eq:EVI} possesses an overwhelming importance
in gradient flow theory on ``Riemannian-like'' spaces,
we refer to the book \cite{AGSbook} for more on gradient flow theory on metric spaces.

Although our main interest is in metric spaces with lower sectional curvature bounds,
here we discuss more general Ricci curvature bounds.
The \emph{Riemannian curvature-dimension condition} $\mathrm{RCD}(K,\infty)$
is a synthetic notion, for metric measure spaces, of the lower Ricci curvature bound
defined as the combination of the $K$-convexity of the relative entropy on the $L^2$-Wasserstein space
and the linearity of the corresponding heat flow (see \cite{St-I,LV,AGSrcd}).
It is known by Sturm \cite{Sturm} that $\lambda$-convex functions
on locally compact $\mathrm{RCD}(K,\infty)$-spaces satisfy $\EVI_{\lambda}$ \eqref{eq:EVI}.
This is the starting point of our proof of the self-contractedness.

In \cite{Petrunin} (see also \cite{ZZ}) it was shown that
any $n$-dimensional Alexandrov space of curvature $\ge k$
equipped with the $n$-dimensional Hausdorff measure
satisfies $\mathrm{RCD}(K,\infty)$ for $K=(n-1)k$.
Therefore the following proposition covers the situation treated in
Corollaries~\ref{BBC-to-SC}, \ref{BBC-no-SF}.
We also refer to \cite{Lytchak,Savare,OhtaGF} for direct studies of gradient curves
in Alexandrov spaces as well as in the Wasserstein spaces over them.

\begin{proposition}[Quasi-convexity of gradient curves]\label{GC-is-QC}
Let $(X,d,m)$ be a locally compact metric space equipped with a Borel measure $m$,
and assume that it satisfies $\mathrm{RCD}(K,\infty)$ for some $K \in \R$.
Then, for any lower semi-continuous, quasi-convex function $f:X \longrightarrow \R$
which is also $\lambda$-convex for some $\lambda \in \R$,
every gradient curve of $f$ is self-contracted.
\end{proposition}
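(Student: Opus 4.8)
The plan is to deduce self-contractedness from the $\EVI_\lambda$ inequality \eqref{eq:EVI} --- which every gradient curve $\gamma$ of $f$ satisfies by the theorem of Sturm \cite{Sturm} recalled above --- together with the quasi-convexity of $f$; this is the metric counterpart of the classical Euclidean argument of \cite[Proposition~6.2]{DLS}. I would only use the standard features of $\EVI_\lambda$-flows: $\gamma$ is absolutely continuous on $[0,\infty)$, and by the energy dissipation identity $t\mapsto f(\gamma(t))$ is continuous and non-increasing. (A locally compact $\RCD(K,\infty)$-space is a proper geodesic space, so any two of its points are joined by a minimizing geodesic.) Fixing $t_3\ge 0$ and setting $\varphi(t):=d(\gamma(t),\gamma(t_3))$ on $[0,t_3]$, the goal reduces to showing that $\varphi$ is non-increasing there; since $\varphi$ is continuous and non-negative, a short elementary argument shows it suffices to prove that the upper right Dini derivative satisfies
\[
D^+\varphi(t) \;:=\; \limsup_{h\downarrow 0}\frac{\varphi(t+h)-\varphi(t)}{h}\;\le\;0 \qquad\text{whenever } t\in(0,t_3)\ \text{and}\ \varphi(t)>0 .
\]

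The key device is to slide the centre in \eqref{eq:EVI} infinitesimally along the geodesic towards $\gamma(t_3)$. Fixing such a $t$, a minimizing geodesic $\sigma\colon[0,1]\to X$ from $\gamma(t)$ to $\gamma(t_3)$, and, for $\epsilon\in(0,1)$, the point $y_\epsilon:=\sigma(\epsilon)$, one has $d(\gamma(t),y_\epsilon)=\epsilon\varphi(t)$ and $d(y_\epsilon,\gamma(t_3))=(1-\epsilon)\varphi(t)$. Since $f\circ\gamma$ is non-increasing and $t\le t_3$ gives $f(\gamma(t_3))\le f(\gamma(t))$, the quasi-convexity of $f$ along $\sigma$ yields the crucial bound $f(y_\epsilon)\le \max\{f(\gamma(t)),f(\gamma(t_3))\}=f(\gamma(t))$. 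I would then integrate \eqref{eq:EVI} with the fixed centre $y=y_\epsilon$ over $[t,t+h]$ (legitimate because $r\mapsto\tfrac12 d(\gamma(r),y_\epsilon)^2$ is absolutely continuous on compact subintervals of $(0,\infty)$), bound the left-hand side from below by the triangle inequality
\[
d(\gamma(t+h),y_\epsilon)\;\ge\;d(\gamma(t+h),\gamma(t_3))-d(y_\epsilon,\gamma(t_3))\;=\;\varphi(t+h)-(1-\epsilon)\varphi(t)
\]
(which is an equality at $h=0$), and bound the $f$-contribution on the right by $\int_t^{t+h}(f(y_\epsilon)-f(\gamma(r)))\,dr\le h\,(f(y_\epsilon)-f(\gamma(t+h)))$ using the monotonicity of $f\circ\gamma$. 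Dividing by $h$, letting $h\downarrow 0$, and using the continuity of $f\circ\gamma$ and of $r\mapsto d(\gamma(r),y_\epsilon)$, I expect to obtain, with $c:=(1-\epsilon)\varphi(t)$,
\[
\epsilon\,\varphi(t)\,D^+\varphi(t)\;=\;\limsup_{h\downarrow0}\frac{\tfrac12(\varphi(t+h)-c)^2-\tfrac12(\varphi(t)-c)^2}{h}\;\le\;\bigl(f(y_\epsilon)-f(\gamma(t))\bigr)-\lambda\,\epsilon^2\varphi(t)^2\;\le\;|\lambda|\,\epsilon^2\varphi(t)^2 ,
\]
the last inequality using $f(y_\epsilon)\le f(\gamma(t))$. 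Dividing by $\epsilon\varphi(t)>0$ gives $D^+\varphi(t)\le|\lambda|\,\epsilon\,\varphi(t)$ for every $\epsilon\in(0,1)$, and letting $\epsilon\downarrow 0$ yields $D^+\varphi(t)\le 0$, which is what we wanted.

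The main obstacle is technical rather than conceptual: one must carefully justify the passage between the almost-everywhere form of $\EVI_\lambda$ and its integrated form (legitimate since $r\mapsto\tfrac12 d(\gamma(r),y)^2$ is locally absolutely continuous), perform the Dini-derivative bookkeeping so that no differentiability of $\gamma$ or of $\varphi$ is needed, and record the elementary fact that a continuous non-negative function with $D^+\le0$ at each of its non-zero points is non-increasing. The conceptual point worth stressing is \emph{why} the perturbation is needed: the inequality \eqref{eq:EVI} only delivers the \emph{global} convexity gap $f(y)-f(\gamma(t))$, whereas quasi-convexity delivers exactly the \emph{directional} information that $f$ does not increase when one moves from $\gamma(t)$ towards a point of no larger value; sliding the $\EVI$-centre infinitesimally along $\sigma$ converts the former into the latter up to an $O(\epsilon^2)$ error, which is precisely the mechanism behind the Euclidean proof --- and also makes clear why this route fails for normed or Finsler spaces, where no $\EVI_\lambda$-flow exists.
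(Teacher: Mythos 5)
Your proposal is correct and follows essentially the same route as the paper: both slide the $\EVI_\lambda$ reference point along a minimizing geodesic from $\gamma(t)$ towards the far point of the curve, use quasi-convexity together with the monotonicity of $f\circ\gamma$ to get $f(y_\epsilon)\le f(\gamma(t))$, and let the sliding parameter tend to $0$ to absorb the $\lambda$-term. The only differences are bookkeeping (one-sided Dini derivatives and the integrated $\EVI$ versus the paper's direct $\limsup$ manipulation of squared distances), not the underlying idea.
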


\begin{proof}
Let $\gamma:[0,\ell) \longrightarrow X$ be a gradient curve of $f$ and take $T \in (0,\ell)$.
Fix arbitrary $t_0 \in (0,T)$ and let $\xi:[0,1] \longrightarrow X$ be a minimal geodesic
from $\gamma(t_0)$ to $\gamma(T)$ (the existence follows from $\mathrm{RCD}(K,\infty)$).
For each $s \in (0,1)$, we observe from the triangle inequality that
\begin{align*}
&\limsup_{t \to t_0} \frac{d(\gamma(t),\gamma(T))^2 -d(\gamma(t_0),\gamma(T))^2}{2(t-t_0)} \\
&\le d\big( \gamma(t_0),\gamma(T) \big) 
 \limsup_{t \to t_0} \frac{d(\gamma(t),\xi(s)) -d(\gamma(t_0),\xi(s))}{t-t_0} \\
&= \frac{d(\gamma(t_0),\gamma(T))}{d(\gamma(t_0),\xi(s))}
 \limsup_{t \to t_0} \frac{d(\gamma(t),\xi(s))^2 -d(\gamma(t_0),\xi(s))^2}{2(t-t_0)}.
\end{align*}
Thanks to (the integration of) \eqref{eq:EVI} with $y=\xi(s)$, we have
\begin{align*}
&\limsup_{t \to t_0} \frac{d(\gamma(t),\xi(s))^2 -d(\gamma(t_0),\xi(s))^2}{2(t-t_0)}
 +\lambda d\big( \gamma(t_0),\xi(s) \big)^2 \\
&\le f\big( \xi(s) \big) -f\big( \gamma(t_0) \big) \le 0,
\end{align*}
where the second inequality follows from the quasi-convexity of $f$
since $f(\gamma(T)) \le f(\gamma(t_0))$ holds for $\gg$ being a gradient curve.
Letting $s \to 0$, we obtain
\[ \limsup_{t \to t_0} \frac{d(\gamma(t),\gamma(T))^2 -d(\gamma(t_0),\gamma(T))^2}{2(t-t_0)}
 \le 0 \]
for all $t_0 \in (0,T)$. 
Hence $d(\gamma(t),\gamma(T))$ is non-increasing in $t \in [0,T]$.
\end{proof}

Let us finally remark that the self-contractedness of gradient curves
does not characterize quasi-convexity, consider for instance $f(x)=\sin x$ on $\R$.
We need a ``global'' property, such as the EVI (between a curve and a point)
or the contraction property (between two curves), for such a characterization of the global convexity.

In a different approach, nevertheless, it was shown in a recent paper \cite{DuCaLe}
that a $C^{1,\alpha}$-curve ($\alpha \in (1/2,1]$) in $\R^n$ 
satisfies a strict version of the self-contractedenss (called the \emph{strong self-contractedness})
if and only if its reparametrization is realized as a gradient curve of a $C^1$-convex function.

\section{Doubling condition and absence of large $\SRA(\aa)$-subsets}\label{sc:Doubling}

In this section we show that the absence of large $\SRA(\aa)$-subsets
implies the doubling condition.
This could be regarded as a weak converse to Theorem~\ref{ATB-no-SRA},
and gives another interesting connection
between the rectifiability of bounded self-contracted curves and the finiteness of dimension.
A metric space $(X,d)$ is said to satisfy the (metric) \emph{doubling condition} if
there exists $L \in \N$ such that, for any $x \in X$ and $R>0$,
we can find $x_1,\ldots,x_L \in B_R(x)$ satisfying
$B_R(x) \subset \bigcup_{i=1}^L B_{R/2}(x_i)$.

\begin{theorem}\label{MeetDoubling}
Let $(X,d)$ be a metric space, and suppose that there exist $0 < \aa < 1$ and $N \in \N$
such that $X$ does not admit any $\SRA(\aa)$-subset of cardinality $\ge N$.
Then $X$ satisfies the doubling condition with $L=L(N,\alpha)$.
\end{theorem}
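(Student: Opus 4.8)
The plan is to show that if $B_R(x)$ cannot be covered by $L$ balls of radius $R/2$ for a suitable $L=L(N,\alpha)$, then one can construct an $\SRA(\alpha)$-subset of cardinality $N$, contradicting the hypothesis. The natural object to extract is a maximal $(R/2)$-separated subset of $B_R(x)$: if no covering by $L$ half-radius balls exists, then in particular there is no $(R/2)$-net of size $L$, so a maximal $(R/2)$-separated set $S \subset B_R(x)$ has $|S| \ge L$. The points of $S$ are pairwise at distance $\ge R/2$ and all lie in a ball of radius $R$, hence all mutual distances lie in the fixed ratio window $[R/2, 2R]$, i.e., they differ by at most a factor of $4$. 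The heuristic is that a bounded set with all pairwise distances comparable behaves, after passing to a large subset, like a subset of a snowflake, which satisfies $\SRA(\alpha)$ by Proposition~\ref{SAinSN}; but rather than invoke snowflakes I would argue directly with comparison angles.

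Concretely, I would iterate a pigeonhole/greedy selection. Starting from $S$, I want to choose points $z_1,\dots,z_N$ so that every triple $z_i,z_j,z_k$ with $i<j<k$ satisfies the $\SRA(\alpha)$ inequality \eqref{SRA-ineq}, equivalently $\wt\angle z_i z_j z_k \le \pi - \arccos(\alpha)$ and the two symmetric statements. Because all pairwise distances are within a factor $4$, the cosine formula \eqref{eq:cosine} shows that the comparison angle $\wt\angle z_j z_i z_k$ at any vertex is bounded \emph{below} by a constant $\eta = \eta(\alpha) > 0$ depending only on the ratio bound $4$ (two sides comparable and the third side not tiny forces the opposite angle away from $0$ — more carefully, if all three sides are in $[R/2, 2R]$ then no comparison angle can be too close to $0$). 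Then, by the identity $\wt\angle z_i z_j z_k + \wt\angle z_i z_k z_j + \wt\angle z_j z_i z_k = \pi$, controlling the largest angle in each comparison triangle from above would give the $\SRA$ condition. To force the angles to be small one selects points that are ``almost collinear'' in the comparison picture: I would order the points of $S$ and repeatedly use a Ramsey-type or pigeonhole argument on the comparison angles $\wt\angle z p z'$ seen from successively chosen base points, exactly mirroring the iterated Ramsey construction in the proof of Theorem~\ref{ATB-no-SRA}, but now run in reverse — instead of deriving a contradiction from small angles, I use the abundance of points guaranteed by the failure of doubling to \emph{produce} a configuration with all relevant comparison angles $\le \pi - \arccos(\alpha)$.

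The cleanest route is probably the following two-step reduction. First, a bounded set with all pairwise distances in a fixed ratio window $[\rho, \Lambda\rho]$ admits, for every $k$, a $k$-point subset that is $\SRA(\alpha)$ provided its cardinality exceeds some $M(k,\Lambda,\alpha)$: this is a purely combinatorial-geometric statement proved by induction on $k$ using the cosine formula to bound angles, together with a Ramsey argument to synchronize the bounds across all triples. Second, relate the failure of the doubling condition to the existence of arbitrarily large such ``ratio-bounded'' sets: a maximal $(R/2)$-separated subset of $B_R(x)$ is exactly such a set with $\rho = R/2$, $\Lambda = 4$, and its cardinality is at least the covering number of $B_R(x)$ by $(R/2)$-balls, which is unbounded precisely when doubling fails. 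Combining, if $X$ were non-doubling for every $L$, we would get $\SRA(\alpha)$-subsets of every cardinality, contradicting the hypothesis; tracking constants gives $L = L(N,\alpha)$.

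The main obstacle is the first step: showing that a ratio-bounded set of sufficiently large cardinality contains a large $\SRA(\alpha)$-subset. The subtlety is that the $\SRA$ inequality must hold for \emph{every} ordered triple, so one cannot merely make individual angles small — one needs a coherent choice, and the angle at the ``middle'' vertex $\wt\angle z_i z_j z_k$ (with $i<j<k$) must be $\le \pi - \arccos(\alpha)$ simultaneously for all $i<j<k$. This is where I expect to need the iterated Ramsey scheme: fix an ordering, and at stage $m$ pass to a subset all of whose points $z,z'$ make comparison angle $<\varepsilon$ at the current base point $z_m$, with $\varepsilon := \arccos(\alpha)/2$; the angle-sum identity then forces, for $i<j<k$, that $\wt\angle z_i z_j z_k$ is small, in particular $< \pi - \arccos(\alpha)$, which is the required bound at the middle vertex, while the ratio-boundedness simultaneously keeps the other two angles of each comparison triangle bounded away from both $0$ and $\pi$ — one must check that the resulting angle at the first vertex, which equals $\pi$ minus two small angles, does not exceed $\pi - \arccos(\alpha)$; this forces the iteration to actually \emph{decrease} the diameter-to-separation ratio along the selected chain, and verifying that this can be arranged (using that $S$ is $(R/2)$-separated but shrinking balls are available at finer scales inside $B_R(x)$) is the delicate bookkeeping. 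Once that is in place, the rest is the standard conversion between covering numbers and separated-set cardinalities.
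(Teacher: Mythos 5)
Your overall skeleton — failure of doubling at scale $R$ yields $L+1$ points of $B_R(x)$ that are pairwise $\ge R/2$ apart, hence a ``ratio-bounded'' set with all distances in $[R/2,2R)$, from which one must extract an $N$-point $\SRA(\alpha)$-subset to reach a contradiction — is exactly the right reduction, and it is the one the paper uses. The gap is in the extraction step, and the angle-based mechanism you propose for it does not work. First, the angle-sum identity points the wrong way: if $\wt\angle z_jz_iz_k<\varepsilon$ at an earlier base point $z_i$, then $\wt\angle z_iz_jz_k+\wt\angle z_iz_kz_j>\pi-\varepsilon$, which bounds the \emph{sum} of the two remaining angles from below; it does not make the middle angle small. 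Worse, ``almost collinear in the comparison picture'' is the worst case for SRA: if $z_j$ is metrically nearly between $z_i$ and $z_k$, then $d(z_i,z_k)$ is close to $d(z_i,z_j)+d(z_j,z_k)$ and \eqref{SRA-ineq} with vertex $z_j$ fails for every $\alpha<1$. Second, even a uniform bound $\wt\angle\le\pi-\arccos(\alpha)$ on all comparison angles would not suffice: that bound is only the necessary consequence \eqref{SRA-Angles} of SRA, not equivalent to it, since squaring the maximum in \eqref{SRA-ineq} gives (say for $d(x,z)\ge d(z,y)$) $d(x,z)^2+\alpha^2d(z,y)^2+2\alpha d(x,z)d(z,y)$, strictly smaller than the bound $d(x,z)^2+d(z,y)^2+2\alpha d(x,z)d(z,y)$ the angle estimate provides. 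Converting an angle bound into \eqref{SRA-ineq} would need a strictly stronger angle parameter (roughly $\alpha-(1-\alpha^2)/2$, which is even negative when $\alpha<\sqrt2-1$) together with ratio improvements that are unavailable: inside a fixed $(R/2)$-separated subset of $B_R(x)$ the separation is $R/2$ and the diameter is below $2R$, so there is no room to ``decrease the diameter-to-separation ratio along the chain.''

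What replaces this step in the paper is a purely combinatorial argument that never verifies SRA through angles. The negation of \eqref{SRA-ineq} for an ordered triple leaves one of six inequalities of the form $d(\cdot,\cdot)\ge d(\cdot,\cdot)+\alpha d(\cdot,\cdot)$; one colors each $3$-element subset of the separated set by which of these six holds (seventh color if none, i.e.\ SRA holds for that triple) and applies the hypergraph Ramsey theorem for triples with seven colors, $H=\RRR(\wt N,\dots,\wt N,N;3)$ with $\alpha(\wt N-2)\ge 3$. If $\wt N$ points are monochromatic in any violating color, chaining the inequalities along the order telescopes: each step adds at least $\alpha R/2$, so one obtains two points at distance $\ge R/2+\alpha(\wt N-2)R/2\ge 2R$, impossible inside $B_R(x)$. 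Hence the Ramsey alternative forces an $N$-point set monochromatic in the seventh color, which is precisely an $\SRA(\alpha)$-subset, contradicting the hypothesis and giving $L=H-1=L(N,\alpha)$. So the ``combinatorial-geometric statement'' you isolate is essentially true, but its proof requires this chaining-against-the-diameter argument (3-uniform Ramsey plus telescoping), not control of comparison angles; as written, your proposal leaves the crucial step unproved.
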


\begin{proof} Let $\wt N \in \N$  be such that $\aa(\wt N-2) \ge 3$,
and we consider the Ramsey number $H = \RRR(\wt N, \wt N, \wt N ,\wt N , \wt N, \wt N, N; 3) \in \N$.
This means that, if we take an $H$-element set $\M$,
then for every coloring of $3$-point subsets of $\M$ in $7$ colors,
we have the following property:
Either there exists an $N$-element subset of $\M$ such that
all of its $3$-point subsets are of the seventh color,
or there exists an $\wt N$-element subset of $\M$ such that
all of its $3$-point subsets are of the same color from the first to the sixth.  

We shall show that the doubling condition holds with $L=H-1$.
Assume in contrary that there exist $x \in X$ and $R > 0$
such that the open $(R/2)$-neighborhood of any $L$-point set in $B_R(x)$ cannot cover $B_R(x)$.
Then one can inductively choose $x_1,\dots,x_{L+1} \in B_R(x)$
with $d(x_i,x_j) \ge R/2$ for every $i \neq j$.
We color triples of points in this set in $7$ colors.
For $i < j < k$ we color $(x_i,x_j,x_k)$ by the following rule:
\begin{enumerate}
\item{$d(x_i,x_k)  \ge d(x_i,x_j) + \aa d(x_j,x_k)$,}\label{c1}
\item{not (\ref{c1}) but $d(x_j,x_k) \ge d(x_i,x_j) + \aa d(x_i,x_k)$,}\label{c2}
\item{not (\ref{c1})--(\ref{c2}) but $d(x_j,x_k) \ge d(x_i,x_k) + \aa d(x_i,x_j)$,}\label{c3}
\item{not (\ref{c1})--(\ref{c3}) but $d(x_i,x_k) \ge d(x_j,x_k) + \aa d(x_i,x_j)$,}\label{c4}
\item{not (\ref{c1})--(\ref{c4}) but $d(x_i,x_j) \ge d(x_j,x_k) + \aa d(x_i,x_k)$,}\label{c5}
\item{not (\ref{c1})--(\ref{c5}) but $d(x_i,x_j) \ge d(x_i,x_k) + \aa d(x_j,x_k)$,}\label{c6}
\item{none of the above.}\label{c7}
\end{enumerate}
By the definition of $L+1=H$, either there exists an $N$-point set satisfying (\ref{c7}) or 
there exists an $\wt N$-point set satisfying one of the properties (\ref{c1})--(\ref{c6}).
The first case contradicts the assumption on the absence of an $N$-point $\SRA(\aa)$-set. 
Now we are going to deal with the second case.
By renumbering, let $x_1,\ldots,x_{\wt N}$ be the $\wt N$ points in question.

\textit{Case}~1: $x_1,\dots,x_{\wt N}$  are of the first color. 
Taking the sum of the inequalities
\begin{align*}
d(x_1,x_3) &\ge d(x_1,x_2) + \aa d(x_2,x_3), \\
d(x_1,x_4) &\ge d(x_1,x_3) + \aa d(x_3,x_4), \\
 &\ldots, \\
d(x_1,x_{\wt N}) &\ge d(x_1,x_{\wt{N}-1}) + \aa d(x_{\wt N-1},x_{\wt N}),
\end{align*}
we have, together with the choice of $\wt N$,
\begin{align*}
d(x_1,x_{\wt N}) &\ge d(x_1,x_2)
 + \aa \big( d(x_2,x_3) + \cdots + d(x_{\wt N-1},x_{\wt N}) \big) \\
&\ge \frac{R}{2} + \aa(\wt N-2)\frac{R}{2} \ge 2R.
\end{align*}
This contradicts the assumption that $x_1,x_{\wt N}  \in B_R(x)$.

\textit{Case}~2: $x_1,\dots,x_{\wt N}$ are of the second color. 
We have the inequalities  
\begin{align*}
d(x_2,x_3) &\ge d(x_1,x_2) + \aa d(x_1,x_3), \\
d(x_3,x_4) &\ge d(x_2,x_3) + \aa d(x_2,x_4), \\
 &\ldots, \\
d(x_{\wt N-1},x_{\wt N}) &\ge d(x_{\wt N-2},x_{\wt N-1}) + \aa d(x_{\wt N-2},x_{\wt N}).
\end{align*}
Summing them provides, similarly to the previous case,
\[ d(x_{\wt N-1},x_{\wt N}) \ge d(x_1,x_2)
 + \aa \big( d(x_1,x_3) + \cdots + d(x_{\wt N-2},x_{\wt N}) \big) \ge 2R, \]
which is a contradiction.

\textit{Case}~3: $x_1,\dots,x_{\wt N}$ are of the third color. 
In this case we sum
\begin{align*}
d(x_2,x_{\wt N}) &\ge d(x_1,x_{\wt N}) + \aa d(x_1,x_2), \\
d(x_3,x_{\wt N}) &\ge d(x_2,x_{\wt N}) + \aa d(x_2,x_3), \\
 &\ldots, \\
d(x_{\wt N - 1},x_{\wt N}) &\ge d(x_{\wt N - 2},x_{\wt N}) + \aa d(x_{\wt N - 2},x_{\wt N - 1})
\end{align*}
to obtain
\[ d(x_{\wt N - 1},x_{\wt N}) \ge d(x_1,x_{\wt N})
 + \aa \big( d(x_1,x_2) + \cdots + d(x_{\wt N - 2},x_{\wt N - 1}) \big) \ge 2R. \]
This is again a contradiction.
The cases produced by the forth, fifth and sixth inequalities
can be reduced to the first, second and third cases, respectively,
by reversing the order of $x_1,\ldots,x_{\wt N}$.
This completes the proof.
\end{proof}

\section{Counter-examples}\label{Counter}

This section is devoted to several kinds of examples of metric spaces
including infinite (or arbitrarily large) $\SRA(\aa)$-subsets
or unrectifiable bounded self-contracted curves.
Some of them satisfy the doubling condition,
therefore the converse of Theorem~\ref{MeetDoubling} does not hold in general.

\subsection{Heisenberg group}\label{Heisenberg}

As a first example of particular interest,
let us consider the \emph{Heisenberg group} $(\R^3,\mathcal{H})$
(we refer to \cite{Mont}).
Let $\gamma:[0,1] \longrightarrow \R^3$ be defined by $\gamma(t)=(0,0,t)$.
The distance between $(0,0,s)$ and $(0,0,t)$ is $2\sqrt{\pi|s-t|}$,
thus the $z$-axis is isometric to the $(1/2)$-snowflake of $\R$.
Hence $\gamma$ is bounded and self-contracted,
whereas every small piece of $\gamma$ has infinite length.

On the one hand,
the Heisenberg group is known to satisfy the \emph{measure contraction property}
(in the sense of \cite{O-mcp,St-II}) and, in particular, the doubling condition.
On the other hand, it does not satisfy the curvature-dimension condition $\CD(K,N)$
for any parameters $K,N$ (see \cite{Juillet}).

\subsection{Metric trees}

For $x,y \in \R^2$, we denote by $[x,y] \subset \R^2$ the segment connecting $x$ and $y$.
Let $\{t_i\}_{i = 1}^{\infty} \subset (0,1]$ be a strictly decreasing sequence
such that $\lim_{i \to \infty} t_i = 0$,
and define $\BB \subset \R^2$ by 
\[ \BB = [(0,0), (1,0)] \cup \bigcup_{i = 1}^{\infty}[(t_i, 0),(t_i, t_i)]. \]
We consider $\BB$ as a metric space with the intrinsic distance denoted by $d_{\BB}$.
Note that $(\BB, d_{\BB})$ is a compact metric tree,
thus it is $\CAT(k)$ for any $k \in \R$,
whereas the local extendability of geodesics fails.
We put $y_i:=(t_i, t_i)$ and $E:=\{y_i\}_{i = 1}^{\infty}$. 
For $i > j$, we have
\[d_{\BB}(y_i, y_j) = 2t_j. \]
Thus for $i > j > k$ we have 
\[ 2t_k = d_{\BB}(y_i, y_k) = d_{\BB}(y_j, y_k) \ge d_{\BB}(y_i, y_j) = 2t_j. \]
Therefore the infinite set $E$ satisfies the $\SRA(\aa)$-condition for every $\aa > 0$
(moreover, it is a ultrametric space).
Compare this with Proposition~\ref{CATExt-are-ATB}.

\begin{example}
Consider $\{t_i\}_{i=1}^{\infty}$ given by $t_i = 2^{-i+1}$.
In this case $(\BB,d_{\BB})$ satisfies the doubling condition.
Note also that the inclusion $\id:\BB \longrightarrow \R^2$ provides
a bi-Lipschitz embedding into the Euclidean plane
(then what is missed from Lemma~\ref{LiftLemma}(\ref{Adv}) is the last condition (\ref{StarGG})). 
\end{example}

\begin{example}
Take $\{t_i\}_{i=1}^{\infty}$ given by $t_i = i^{-1}$.
Then $E$ provides a (discrete) bounded self-contracted curve of the infinite length. 
\end{example}

\subsection{Laakso graphs}

 \begin{figure}
   \includegraphics[scale=0.25]{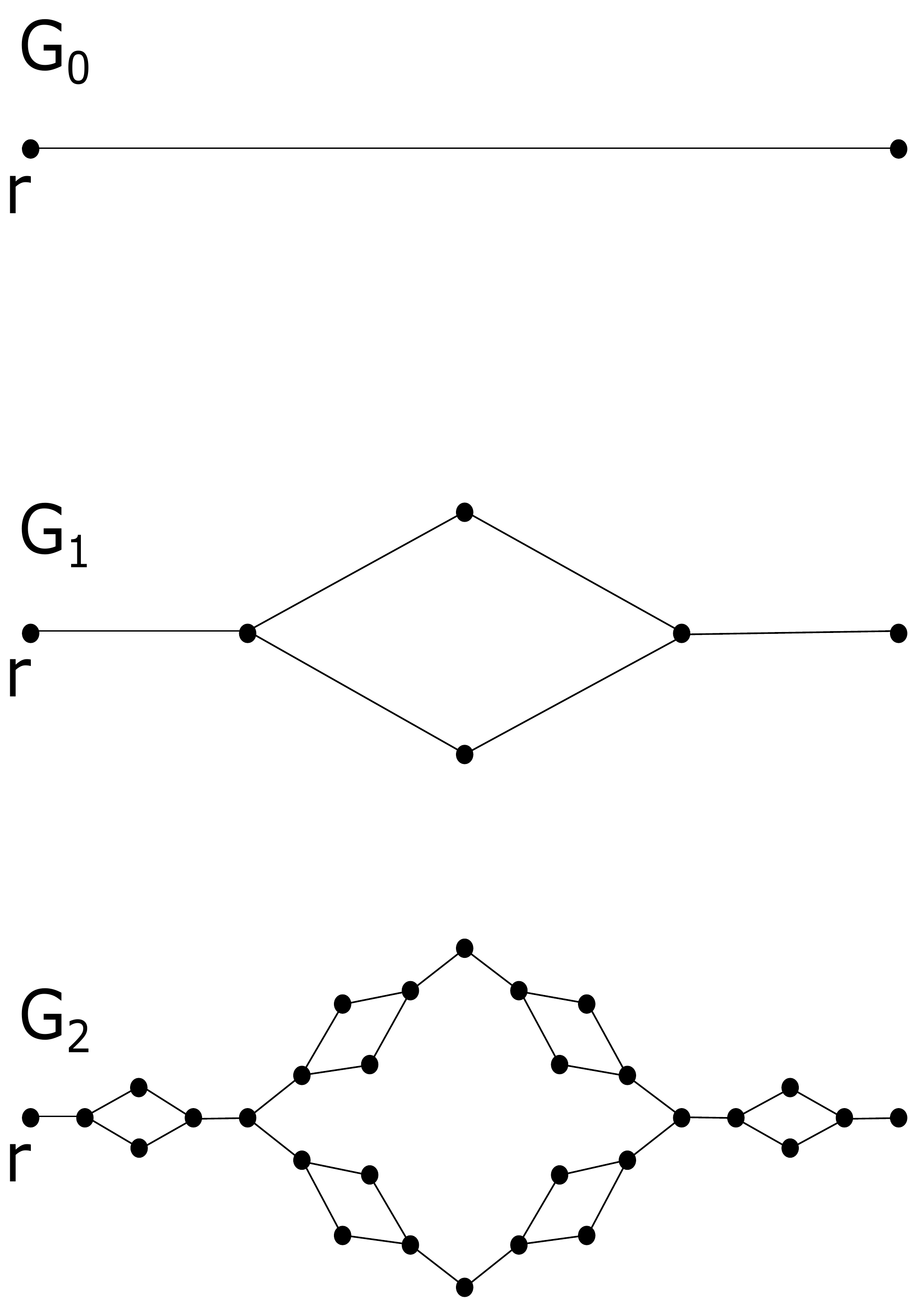}
   \caption{Laakso graphs.}
  \label{Laakso-gr}
 \end{figure}

Next we are going to find $\SRA(\aa)$-subsets in \emph{Laakso graphs}.
Let us give the definition and basic properties of Laakso graphs following \cite{MN-MConv}
(see also \cite{Laakso, LP-BilEmb}).
The family of Laakso graphs $\{G_N\}_{N=0}^{\infty}$ is defined via induction
as in Figure~\ref{Laakso-gr}. 
Precisely, we start with $G_0$ consisting of one edge of unit length,
and $G_N$ is constructed by six copies of $G_{N-1}$ whose metrics are scaled
by a factor of $1/4$.
Glue four of these copies cyclically by identifying pairs of endpoints,
and attach at two opposite gluing points the remaining two copies.
Then $G_N$ is the resulting graph equipped with the intrinsic distance
(thus the diameter of $G_N$ is again $1$).
Laakso graphs satisfy the doubling condition and local Poincar\'e inequality.

\begin{proposition}\label{LaaksoProp}
For any $n,N \in \N$ with $n \le N$,
there exists an $n$-point $\SRA(3/5)$-subset $X$ in $G_N$.
\end{proposition}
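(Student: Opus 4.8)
The plan is to find inside $G_N$ a sequence of $n$ points $x_1,\dots,x_n$ whose pairwise distances are (essentially) geometrically decaying, so that the largest two of any three distances are within a factor $\aa=3/5$ of each other, which forces the $\SRA(\aa)$-inequality \eqref{SRA-ineq}. The natural source of such a configuration is the self-similar structure of the Laakso graphs: $G_N$ contains a scaled copy of $G_{N-1}$ with scaling factor $1/4$, so by descending through the levels one obtains nested subgraphs $G_N \supset H_1 \supset H_2 \supset \cdots \supset H_{n-1}$, where $H_k$ is isometric to $G_{N-k}$ rescaled by $4^{-k}$ and is attached to the ambient graph at a single gluing vertex (one of the two ``opposite gluing points'' in the inductive construction). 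Let $p_k$ denote that attaching vertex of $H_k$; then for any point $y$ lying in $H_k$ but outside $H_{k+1}$, every path from $y$ to a point of $H_{k+1}$ must pass through $p_{k+1}$, so distances from such a $y$ to points deeper in the nest are governed by $d(y,p_{k+1})$ plus a term of size $O(4^{-(k+1)})$.

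First I would fix notation for this nested family and record the two basic metric facts: (i) $\operatorname{diam}(H_k) = 4^{-k}$, and (ii) for $k < \ell$, any point of $H_k \setminus H_{k+1}$ is at distance at least $c\cdot 4^{-k}$ from $p_{k+1}$ for a universal constant $c>0$ (one can read off $c$ from the picture — the branch points are at definite positions along the unit edge). Then I would choose $x_k$ to be a point in $H_{k-1}\setminus H_k$ realizing, say, the farthest point of $H_{k-1}$ from $p_k$, so that $d(x_k, p_k)$ is comparable to $4^{-(k-1)}$ from both sides. For $i<j$, since $x_j$ lies in $H_{j-1}\subseteq H_i$ and the geodesic from $x_i$ to $x_j$ exits $H_i$-side through $p_i$ only if... — more cleanly: $x_i \in H_{i-1}\setminus H_i$ and $x_j\in H_i$, so $d(x_i,x_j)=d(x_i,p_i)+d(p_i,x_j)$, and both terms are $\Theta(4^{-i})$ while the correction from $d(p_i,x_j)$ relative to $d(x_i,x_j)$ is controlled. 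The upshot is an estimate of the form $a\,4^{-i}\le d(x_i,x_j)\le b\,4^{-i}$ for all $i<j$, with explicit $a,b$. The key inequality to verify is then: for $i<j<k$, the two larger of $d(x_i,x_j),d(x_i,x_k),d(x_j,x_k)$ — which are $d(x_i,x_j)$ and $d(x_i,x_k)$, both $\Theta(4^{-i})$ — satisfy $d(x_i,x_j)\le d(x_i,x_k)+\tfrac35 d(x_j,x_k)$ and symmetrically, using $d(x_j,x_k)=\Theta(4^{-j})=\Theta(4^{-i-1})$; and the remaining case where the two largest are both $\Theta(4^{-i})$ because two of the indices agree at the top level needs the corresponding bound too. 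A short computation with $b/a$ and the factor $1/4$ between consecutive levels should comfortably yield the constant $3/5$; indeed the slack is large, since distances drop by $4$ at each step while the comparison constant only needs to absorb a ratio close to $1$.

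The main obstacle I anticipate is purely bookkeeping: pinning down the constants $a$, $b$, and $c$ precisely enough from the recursive construction so that the final three-point inequality really closes with $\aa=3/5$ rather than some slightly worse constant, and handling the degenerate sub-cases of the $\SRA$ inequality when the two top indices of the triple coincide (so that $d(x_i,x_k)$ is not automatically much larger than $d(x_j,x_k)$). These are elementary but require care in reading the geometry of $G_N$. An alternative, possibly cleaner, route — which I would keep in reserve — is to exploit that $G_N$ bi-Lipschitzly contains long geodesic segments together with these side-branches and simply take $x_k$ along a single geodesic at positions with geometrically decreasing gaps; but the self-similar ``attaching vertex'' description above seems to give the cleanest control on all pairwise distances simultaneously, which is exactly what the $\SRA$ condition demands.
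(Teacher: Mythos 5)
Your plan has a genuine gap, at two levels. First, the concrete point selection fails. In a Laakso graph every point of $H_{k-1}$ lies on a monotone geodesic of length $\operatorname{diam}(H_{k-1})=4^{-(k-1)}$ from the attaching vertex $p_{k-1}$ to the far endpoint, and any path from a point outside $H_k$ to that endpoint must pass through $p_k$; hence $d(x,p_k)\le 3\cdot 4^{-k}$ with the maximum attained at $x=p_{k-1}$. So ``the farthest point of $H_{k-1}$ from $p_k$'' is the previous attaching vertex, and your choice gives $x_k=p_{k-1}$. But then for $i<j<k$ the point $x_j=p_{j-1}$ is a cut vertex separating $x_i$ from $x_k$, so $d(x_i,x_k)=d(x_i,x_j)+d(x_j,x_k)$ exactly. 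For a triple with distances $d_1\ge d_2\ge d_3$ the $\SRA(\aa)$-condition \eqref{SRA-ineq} is equivalent to the single inequality $d_1\le d_2+\aa d_3$ (the other two instances are automatic), so any additive (``collinear'') triple violates $\SRA(\aa)$ for every $\aa<1$; equivalently, the comparison angle at the middle point is $\pi$, contradicting \eqref{SRA-Angles}. The same objection eliminates your reserve plan of taking the $x_k$ along a single geodesic with geometrically decreasing gaps: points on a geodesic are the worst possible configuration for $\SRA$, not a favorable one. Your opening criterion --- that the two largest of the three distances being within a factor $3/5$ of each other ``forces'' $\SRA$ --- is also not correct: what is needed is $d_1-d_2\le \aa d_3$, a statement about the \emph{difference} of the two large distances at the scale of the \emph{smallest} one.

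Second, even with a better choice of $x_k$, the quantitative scheme is too coarse to close. Two-sided bounds $a\,4^{-i}\le d(x_i,x_j)\le b\,4^{-i}$ with $a<b$ cannot yield $\SRA(3/5)$, because for $i<j<k$ the two large distances are both of order $4^{-i}$ while the requirement is $|d(x_i,x_j)-d(x_i,x_k)|\le\tfrac35\,d(x_j,x_k)=O(4^{-j})$: the two large distances must agree up to an error at the much finer scale $4^{-j}$, which $\Theta$-bounds cannot provide once $j-i$ is large. The paper's construction (Example~\ref{LaaksoEx}) achieves exactly this by giving each point a private detour off a common spine: $y_i$ lies on the ``never right'' geodesic at parameter $\tfrac14+\cdots+4^{-i}$ and $x_i$ is the endpoint of a ``never left'' path of length $4^{-i}$ from $y_i$, so that $d(x_i,x_k)=4^{-i}+d(y_i,y_k)+4^{-k}$ holds exactly. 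The detour $4^{-i}$ then cancels in $d(x_i,x_j)-d(x_i,x_k)$, leaving a difference $<3\cdot 4^{-(j+1)}$, while $d(x_j,x_k)\ge 5\cdot 4^{-(j+1)}$, which is where the constant $3/5$ comes from. In your nested-copy picture the decomposition $d(x_i,x_j)=d(x_i,p_i)+d(p_i,x_j)$ is fine, but it shows that $d(x_i,x_j)-d(x_i,x_k)=d(p_i,x_j)-d(p_i,x_k)$, and making this difference small compared with $d(x_j,x_k)$ is precisely the missing idea: each $x_j$ must branch away from the route towards the deeper copies by an amount comparable to $4^{-j}$, and the pairwise distances must then be computed exactly, not up to multiplicative constants.
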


The construction of $X$ will be given by Example~\ref{LaaksoEx} below. 
Before that, we explain the necessary notions.
Fix one of the two vertexes of $G_N$ of degree one.
We call it the \emph{root} of $G_N$ and denote by $r$ (see Figure~\ref{Laakso-gr}).
Note that in no edge the two endpoints are at the same distance from the root.
Thus we can direct all the vertexes from the endpoint which is closer to $r$
to the endpoint further from $r$.
It is easy to see that any vertex has at most two outgoing edges
with respect to this direction structure.
For a vertex which has two outgoing edges,
we say that one of them goes to the \emph{right} and the other one goes to the \emph{left}.
We make such a choice in an arbitrary way for each vertex.

\begin{example}\label{LaaksoEx}
Fix $n, N \in \N$ such that $n \le N$. 
Let $\gg:[0,1] \longrightarrow G_N$ be the unit speed minimizing geodesic
from $r$ to the other end of $G_N$, never going to the right in the sense above.
For each $1 \le i \le n$, we put
\[ y_i :=\gg\bigg( \frac{1}{4}+\frac{1}{16}+ \dots +\frac{1}{4^i} \bigg). \]
Then let $x_i \in G_N$ be the endpoint of the oriented path 
which starts from $y_i$, never goes to the left and has length $4^{-i}$.
We define $X := \{x_1,\dots,x_n\}$.
Note that every $x_i$ is a vertex of $G_N$ by $n \le N$.
\end{example}

\begin{proof}[Proof of Proposition~$\ref{LaaksoProp}$]
We shall show that $X$ in Example~\ref{LaaksoEx} satisfies the $\SRA(3/5)$-condition.
For $1 \le i < k \le n$, we observe by construction 
\begin{align*}
d(x_i, x_k) &= d(x_i, y_i) + d(y_i, y_k) + d(y_k, x_k) \\
&= \frac{1}{4^i} + \bigg( \frac{1}{4^{i + 1}} + \dots + \frac{1}{4^k} \bigg) + \frac{1}{4^k}.
\end{align*}
Hence we have, for $i < j < k$, 
\[ d(x_j, x_k) < d(x_i,x_k) < d(x_i, x_j). \]
Thus, to show the $\SRA(3/5)$-condition, it is sufficient to see
\[ d(x_i,x_j) \le d(x_i, x_k) + \frac{3}{5} d(x_j, x_k). \]
This is done by 
\[ d(x_i,x_j) - d(x_i, x_k)
 = \frac{1}{4^j} -\bigg( \frac{1}{4^{j+1}} + \dots + \frac{1}{4^k} \bigg) - \frac{1}{4^k}
 < \frac{1}{4^j} -\frac{1}{4^{j+1}} =\frac{3}{4^{j+1}}, \]
and
\[ d(x_j, x_k) \ge \frac{1}{4^j} +\frac{1}{4^{j+1}} =\frac{5}{4^{j+1}}. \]
\end{proof}

\section{Questions}\label{Questions}

Recall from Proposition~\ref{GC-is-QC} that
gradient curves of lower semi-continuous, quasi-convex, $\ll$-convex functions in 
locally compact $\RCD(K,\infty)$-spaces are self-contracted.
However, it is not known whether bounded self-contracted curves are rectifiable in $\RCD(K,\infty)$-spaces.
By Theorem~\ref{ATB-to-SC}, to provide an affirmative answer to this question,
it suffices to show that $\RCD(K,\infty)$-spaces are locally $\ATB(\ee)$ for some $0 < \ee < \pi/6$.
It may be in fact more reasonable to consider $\RCD(K,N)$-spaces with $N \in (1,\infty)$,
since the finite-dimensionality is a natural condition for the rectifiability.
Thus we have the following question.   

\begin{Question}
Do $\RCD(K,N)$-spaces satisfy the local $\ATB(\ee)$-condition?
\end{Question}

Notice that the weaker notion of measure contraction property does not imply
$\ATB(\ee)$ as is evident in the Heisenberg group (recall Subsection~\ref{Heisenberg}).



Related to Proposition~\ref{CBBs-are-ATB}, one can also ask the following question.
See \cite[7.19]{BGP} for the notion of the boundary of an Alexandrov space of curvature $\ge 0$.

\begin{Question}
Do the boundaries of finite-dimensional Alexandrov spaces of curvature $\ge 0$ satisfy the local $\ATB(\ee)$-condition?
\end{Question}






\subsection*{Acknowledgements}
NL was partially supported by RFBR grant 17-01-00128.
SO was supported in part by JSPS Grant-in-Aid for Scientific Research (KAKENHI) 15K04844.
Theorem~\ref{MeetDoubling},
Propositions~\ref{CATExt-are-ATB}, \ref{pr:norm}, \ref{Fin-are-ATB} and \ref{SubATB} 
are supported by the Russian Science Foundation under grant 16-11-10039.
We would like to thank Sergei Ivanov for useful discussions.

\bibliography{circle}

\bibliographystyle{plain}


\end{document}